\title[Capillary surfaces arising in singular perturbation problems]{Capillary surfaces arising in singular perturbation problems}
\author{Aram L. Karakhanyan}
\address{School of Mathematics, The University of Edinburgh, Peter Tait Guthrie Road, EH9 3FD, Edinburgh, UK}
\email{aram6k@gmail.com}
\renewenvironment{proof}[1][\proofname ]{{\noindent \bfseries #1. }}{\qed \bigskip }
\newcommand{\R}{{\mathbb R}}
\newcommand\po[1]{ \{ {#1} > 0 \}}
\newcommand{\m}[1]{\mathcal{#1} }
\newcommand{\e}{\varepsilon}
\newcommand{\supp}{\operatorname{supp}}
\newcommand{\D}{\nabla}
\renewcommand\H{\mathcal H}
\newcommand\B{\mathcal B}
\newcommand\p{\partial}
\newcommand\fb[1]{\p \{ {#1} > 0 \}}
\newcommand\fbr[1]{\p_{\rm red} \{ {#1} > 0 \}}
\def\Om{\Omega}
\def\na{\nabla}
\newcommand\I[1]{\chi_{\{#1>0\}}}  % characteristic-indicator function
\def\S{\mathbb S}%sphere
\def\lap{\triangle}
\newcommand\M{\mathscr M}
\newcommand\X{\mathscr X}
\newcommand\C{\mathscr C}
\newcommand\vol{\text{vol}}
\newtheorem{theorem}{Theorem}[section]
\newtheorem{cor}[theorem]{Corollary}
\newtheorem{defn}{Definition}[section]
\newtheorem{lem}[theorem]{Lemma}
\newtheorem{prop}[theorem]{Proposition}
\newtheorem{remark}[theorem]{Remark}
\newtheoremstyle{named}{}{}{\itshape}{}{\bfseries}{.}{.5em}{\thmnote{#3 }#1}
\theoremstyle{named}
\newtheorem*{AAA}{Theorem A}
\newtheorem*{BBB}{Theorem B}
\newtheorem*{CCC}{Theorem C}
\tikzstyle arrowstyle=[scale=1]
\tikzstyle directed=[postaction={decorate,decoration={markings,
    mark=at position .65 with {\arrow[arrowstyle]{stealth}}}}]
\tikzstyle reverse directed=[postaction={decorate,decoration={markings,
    mark=at position .65 with {\arrowreversed[arrowstyle]{stealth};}}}]
\numberwithin{equation}{section}
\thanks{2000 Mathematics Subject Classification. Primary 49Q05, 35R35, 35B25. 
\\ Keywords: Singular perturbation problem, free boundary regularity, capillary surfaces, global solutions.}
\begin{document}
\maketitle

\baselineskip=14pt    %line spacing 13
\begin{abstract}
In this paper we prove  some Bernstein type theorems for a class of  stationary points of the
Alt-Caffarelli functional in $\R^2$ and $\R^3$arising as  limits of the singular perturbation problem
\begin{equation}\label{abs-pde}
\left\{
\begin{array}{lll}
\lap u_\e(x)=\beta_\e(u_\e)& \mbox{in}\quad B_1, \\
|u_\e|\le 1&\mbox{in} \quad  B_1, \\
\end{array}
\right.
\end{equation}
 in the unit ball $B_1$ as $\e\to 0$. Here $\beta_\e(t)=\frac1\e\beta(t/\e)\ge 0, \beta\in C_0^\infty[0,1], \int_0^1\beta(t)dt=M>0$,
is an approximation of the Dirac measure and $\e>0$.
The limit functions $u=\lim\limits_{\e_j \to 0}u_{\e_j}$ of 
uniformly converging sequences $\{u_{\e_j}\}$ solve a Bernoulli type free boundary problem in some weak sense.
Our approach has two novelties:
% 1 hybrid
First we develop a hybrid method for stratification of the free boundary 
$\fb{u_{0}}$
of blow-up solutions which combines some ideas and techniques 
of viscosity and variational theory.
An important tool we use is a new monotonicity formula for the solutions $u_\e$ based on a computation of J. Spruck.
It  implies that any blow-up  $u_0$ of $u$ either vanishes identically or is
homogeneous function of degree one, that is $u_0=rg(\sigma), \sigma\in \S^{N-1}$ in spherical coordinates $(r, \theta)$.
In particular, this implies that in two dimensions the singular set is empty at the non-degenerate points, and in
three dimensions the singular set of $u_0$ is at most a singleton.   
% 2 Minimal surfaces
Second, we show that the spherical part $g$ is the support function (in Minkowski's sense)
of some capillary surface contained in the sphere of radius $\sqrt{2M}$.
In particular, we show that $\nabla u_0:\S^2\to \R^3$ is an almost conformal and minimal immersion and
the singular Alt-Caffarelli example corresponds to a piece of catenoid
which  is a unique ring-type
stationary minimal surface determined by the support function $g$.
\end{abstract}

\tableofcontents

%-------------------------------------
%    SECTION
%-------------------------------------
\section{Introduction}
In this paper we study the singular perturbation problem
\begin{equation}\label{pde-0}\tag{$\mathcal P_\e$}
\left\{
\begin{array}{lll}
\lap u_\e(x)=\beta_\e(u_\e) & \mbox{in}\quad B_1, \\
%u_\e=g&\mbox{on}\quad \partial \Omega\\
|u_\e|\le 1 & \mbox{in}\quad B_1, 
\end{array}
\right.
\end{equation}
where $\e>0$ is a small parameter and
\begin{eqnarray}\label{beta}
\left\{\begin{array}{lll}
\beta_\e(t)=\frac1\e\beta\left(\frac{t}\e\right), \\
\beta(t)\ge 0, \quad \supp\beta\subset [0, 1], \\
\int_0^1\beta(t)dt= M>0,
\end{array}
\right.
\end{eqnarray}
is an approximation of the Dirac measure, $B_1\subset \R^N$ is the unit ball centered  at the origin.
It is well known that \eqref{pde-0}
models propagation
of equidiffusional premixed  flames with high activation of energy \cite{C-95}.
Heuristically, the limit $u_0=\lim\limits_{\e_j\to 0}u_{\e_j}$ (for a suitable sequence $\e_j\to 0$) solves a Bernoulli type free boundary problem
with the following free boundary condition
$$|\na u^+|^2-|\na u^-|^2=2M.$$
If the functions $\{u_\e\}$ are also  minimizers of
\begin{equation}
J_\e[u_\e]=\int_{\Omega}\frac{|\na u_\e|^2}2+\B(u_\e/\e), \quad \B(t)=\int_0^t\beta(s)ds, 
\end{equation}
then the limits of $\{u_\e\}$ inherit the generic features of minimizers (e.g. non-degeneracy, rectifiability of $\fb u$, etc.).
Consequently, the limits of uniformly converging  sequences
$\{u_{\e_j}\}$ as $\e_j\to 0$ are  minimizers of
the Alt-Caffarelli functional $J[u]=\int_{B_1}|\na u|^2+2M\chi_{\{u>0\}}$.
It is known that the singular set of minimizers is empty in dimensions $2, 3$ and  $4$, see  \cite{AC}, \cite{CJK}, \cite{JS}.
However, if $u_\e$ is not a minimizer then the analysis of 
the limits $u$ presents a more delicate problem. 
The main difficulty in carrying out such analysis is that 
the free boundary may contain degenerate points \cite{Weiss}.

\smallskip 

This paper is devoted to the study of the blow-ups of the 
limits of the singular perturbation problem \eqref{pde-0} and establishes a new and
direct connection with  {\bf minimal surfaces}. In particular, we
show that every blow-up of a limit function $u=\lim_{\e_j\to 0}u_{\e_j}$ in $\R^3$
(for an appropriate sequence $\e_j$)
defines an almost  conformal and minimal immersion which is perpendicular to the sphere
of radius $\sqrt{2M}$ where $M=\int_0^1\beta(t)dt$. In other words,  one obtains a capillary surface  inside the
sphere
of radius $\sqrt{2M}$. 

\medskip
Our  first result  is
\begin{AAA}
Let $u_{\e_j}\to u$ locally uniformly in $B_1$ for some subsequence $\e_j$,
then any blow-up of $u$ at free boundary point $x_0\in \fb u$ is either
identically zero or homogeneous function of degree one.
In particular, if  $N=2$ and $u$ is not degenerate at  $x_0\in \fb u$ then every blow-up of $u$ at $x_0$  must be one of the following functions (after some rotation of coordinates):
\begin{itemize}
\item[(1)] $\sqrt{2M}x_1^+$, \hbox{half plane solution provided that there is a measure theoretic normal at}\ $x_0$,
\item[(2)]{wedge} $\alpha |x_1|, 0<\alpha\le\sqrt{2M}$,
\item[(3)]{two plane solution} $\alpha x_1^+-\beta x_1^-, \alpha^2-\beta^2=2M, \alpha, \beta>0$.
\end{itemize}

\end{AAA}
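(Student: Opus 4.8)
The plan is to establish the homogeneity statement via a Weiss-type monotonicity formula adapted to the singular perturbation setting, and then to carry out a case analysis in two dimensions. First I would pass to the limit in the $\e$-problem to extract, at a free boundary point $x_0$, a sequence of rescalings $u_{\rho_j}(x) = u(x_0+\rho_j x)/\rho_j$ converging locally uniformly (and in $H^1_{\mathrm{loc}}$, using the uniform Lipschitz bounds that the limit $u$ inherits) to a blow-up $u_0$. The first key step is the monotonicity formula: I would use the identity of J. Spruck alluded to in the abstract — which controls the combination of the Dirichlet energy on spheres against the bulk term $\int \B(u_\e/\e)$ under scaling — to produce a quantity $\Phi(r)$, built from $u_\e$, that is nondecreasing in $r$ and whose derivative vanishes precisely when $u_\e$ is (approximately) homogeneous of degree one. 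Passing $\e\to 0$ first and then $r\to 0$, the monotonicity survives for $u$, and $\Phi(0^+)$ equals $\Phi$ evaluated on the blow-up $u_0$ for every radius, forcing $u_0$ to be homogeneous of degree one; if instead the blow-up limit of $\Phi$ is the value corresponding to the trivial solution, then $u_0 \equiv 0$. The main obstacle here — and the step I expect to be most delicate — is justifying that the monotone quantity is well-defined and that its limiting behavior transfers through the double limit $\e\to 0$, $r\to 0$ without losing the rigidity (the cross term $\int \B(u_\e/\e)$ must be shown to converge to $M \,\H^{N-1}$-measure of the free boundary in the right sense, which is where the structure \eqref{beta} and the Lipschitz/non-degeneracy estimates for $u_\e$ are essential).

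Once homogeneity of degree one is in hand, specialize to $N=2$. Write $u_0 = r\,g(\theta)$ in polar coordinates centered at $x_0$, where $g$ is a Lipschitz function on the circle $\S^1$. Since $u_0$ is harmonic in $\{u_0>0\}$ and homogeneous of degree one, on each arc where $g>0$ the function $g(\theta)$ must satisfy $g'' + g = 0$, so $g(\theta) = c\sin(\theta-\theta_0)$ on that arc; homogeneous degree-one harmonic functions in the plane that are positive on an arc are thus of the form $a x_1^+$ after rotation, and each connected component of the positivity set $\{u_0>0\}$ is an open half-plane or a wedge (sector) — but a wedge of opening angle other than $\pi$ cannot support a positive homogeneous harmonic function of degree one with zero boundary data unless the opening is exactly $\pi$, since $\sin$ vanishing at both endpoints of an arc of length $\ell$ forces $\ell=\pi$. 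Hence $\{u_0>0\}$ is a union of half-planes through the origin, which in $\R^2$ means either one half-plane or two complementary ones; in the latter case the two slopes $\alpha,\beta$ on the two sides are linked by the free boundary condition $|\na u_0^+|^2 - |\na u_0^-|^2 = 2M$, giving cases (2) and (3), while a single half-plane with $u_0^-\equiv 0$ gives $|\na u_0^+|^2 = 2M$, i.e. case (1).

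The remaining point is to verify that the free boundary condition $|\na u_0^+|^2 - |\na u_0^-|^2 = 2M$ holds across the flat free boundary of the blow-up and to distinguish (1) from the degenerate possibility. For (1), the hypothesis that $u$ is not degenerate at $x_0$ and that there is a measure-theoretic normal there pins the free boundary of $u_0$ to a single line with $u_0$ supported on one side, and the condition forces $\alpha=\sqrt{2M}$; I would obtain the gradient jump relation by testing the distributional equation $\lap u_\e = \beta_\e(u_\e)$ against a suitable vector field (a domain-variation / Pohozaev-type identity), passing to the limit using $\int \beta_\e(u_\e)\,\zeta \to 2M\int_{\fb u}\zeta$, and then reading off the one-dimensional profile of $u_0$ transverse to its flat free boundary. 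The non-degeneracy assumption is exactly what rules out $\alpha=0$ (the identically zero blow-up) in the listed alternatives, so the trichotomy (1)–(3) is exhaustive when $u$ is non-degenerate at $x_0$.
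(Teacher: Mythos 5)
Your overall plan is essentially the paper's: Spruck's computation for the homogeneity statement, the polar ODE $g''+g=0$ to show that in $N=2$ the positivity set is a union of half-planes, and a domain-variation identity for the slopes. The description of the monotonicity step is vague but compatible: the paper derives a uniform-in-$\e$ bound $\int_{T_0}^{T}\int_{\S^{N-1}}(\p_t v_\e)^2\le C$ with $v=u/r$, $r=e^{-t}$, passes $\e\to 0$, and concludes $u_r-u/r\to 0$ in $L^2$ along blow-up scales.

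The genuine gap is in case (1). You propose to pass to the limit $\int\beta_\e(u_\e)\zeta\to 2M\int_{\fb u}\zeta$, but this is not what the argument needs, nor is it available for stationary (non-minimizing) solutions. What enters the domain-variation identity \eqref{domain-var} is the primitive $\B(u_\e/\e)$, and along the blow-up sequence $\B(u_{\delta_j}/\delta_j)$ can a priori converge to a nonzero constant $M_0\in[0,M]$ inside the interior of $\{u_0\equiv 0\}$. If one blindly inserts this, the identity gives
\[\int_{\{x_1>0\}}M\p_1\phi+\int_{\{x_1<0\}}M_0\p_1\phi=\int_{\{x_1>0\}}\tfrac{\alpha^2}{2}\p_1\phi,\]
hence $\alpha^2=2(M-M_0)$, which only yields $\alpha\le\sqrt{2M}$. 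The entire content of case (1) is to rule out $M_0>0$: the paper does this by observing that $\{\gamma<\B(u_{\delta_j}/\delta_j)<M-\gamma\}\subset\{\gamma'<u_{\delta_j}/\delta_j<1-\gamma'\}$, a set whose measure vanishes as $\delta_j\to 0$, so $M_0$ cannot lie strictly between $\gamma$ and $M-\gamma$ for any $\gamma>0$; since $u_0\equiv 0$ there, this forces $M_0=0$. This is also precisely the point where the extra hypothesis (existence of a measure-theoretic normal, so $\{u_0\equiv 0\}$ has positive density) is used. Relatedly, your phrase that the free boundary relation ``gives cases (2) and (3)'' conflates the two: in the wedge case (2) the zero set $\{u_0=0\}$ has empty interior, there is no $M_0$-balance to invoke, and stationary wedge solutions genuinely exist for every $0<\alpha\le\sqrt{2M}$; the equality $\alpha^2-\beta^2=2M$ is a constraint only in the true two-phase case (3).
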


\medskip
In order to prove Theorem A we will introduce a monotone quantity 
based on a computation of Joel Spruck  \cite{Spruck}.
From Theorem A it follows that in $\R^2$ the blow-up limits at  non-degenerate free boundary points
can be explicitly computed.
It is worthwhile to note that the minimizers of
\begin{equation}
J[u]=\int_{B_1}|\na u|^2+ 2M\I {u}
\end{equation}
are non-degenerate, i.e. for each subdomain $\Omega'\subset\subset B_1$ there is a constant $c_0>0$
depending on $\mbox{dist}(\partial B_1,\partial\Omega' )$, $N$, $M$,  such that
\begin{equation}\label{nondeg}
\sup_{B_r(x_0)}u^+\ge c_0r, \qquad \forall x_0\in \fb u\cap \Omega', \quad B_r(x_0)\subset B_1.
\end{equation}
However, if $u_\e$ is {\bf any} solution of \eqref{pde-0} then non-degeneracy may not be true.
There is a sufficient condition \cite{CLW-uniform} Theorem 6.3 that implies \eqref{nondeg}.

Some well-known examples demonstrate rather
strikingly that for the stationary case there are wedge-like global solutions
for which the measure theoretic  boundary of $\po u$ is empty. This is impossible for minimizers. In fact,  the zero set  of a minimizer 
has uniformly positive Lebesgue density.
In this respect Theorem A only states that if $u$ is non-degenerate at $x_0$ then the blow-up is a nontrivial cone.

The existence of wedge solutions (see Remark 5.1 \cite{CLW-uniform}) suggests that some further assumptions are needed to
formulate the free boundary condition. 
For instance, one may 
assume that the upper Lebesgue density at $x\in \fb u$ 
satisfies $\Theta^{*}(x, \{u>0\})<1$,
i.e. the upper density measure is not covering the full ball.
%
%\medskip
We emphasize that for some  solutions the topological and measure theoretic boundaries may not coincide. 
Our next result addresses the degeneracy and wedge-formation in $\R^3$
of blow-ups at  free boundary points.

\begin{BBB}
Suppose $N=3$. Let  $u\ge 0$ be a limit of some 
uniformly converging sequence $\{u_{\e_j}\}$ solving \eqref{pde-0}
such that $u$ is non-degenerate at $y_0\in \fb u$.
Let $u_0$ be a blow-up of $u$ at $y_0$. If $\mathfrak C$ is a component of
$\fb{u_0}$ such that the measure theoretic boundary of $\po{u_0}$ in  $\mathfrak C$ is non-empty
then
\begin{itemize}
\item[(1)] all points of $\mathfrak C$ are non-degenerate,
\item[(2)]  $\mathfrak C$ is a subset of the  measure theoretic boundary of $\po{u_0}$,
\item[(3)]  $\mathfrak C\setminus\{0\}$ is smooth.
\end{itemize}
In particular in $\R^3$ the singular set of $\fb{u_0}$ is atmost a singleton.
\end{BBB}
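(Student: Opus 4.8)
The plan is to exploit the homogeneity of degree one of $u_0$ (guaranteed by Theorem A, since $u$ is non-degenerate at $y_0$) to reduce everything to the analysis of the trace $g=u_0|_{\S^2}$ and the ``link'' $\Gamma=\fb{u_0}\cap\S^2$ of the free boundary cone. First I would set up the partition of $\Gamma$ into the three disjoint subsets mentioned in the abstract: the set where $\po{u_0}$ has positive lower Lebesgue density (the genuinely non-degenerate part), the set where it has zero density (the degenerate or ``wedge'' part), and the intermediate set. Since $u_0$ is $1$-homogeneous, each of these is a cone, so it suffices to understand the corresponding subsets of $\Gamma$ inside a component $\mathfrak C$ of the free boundary. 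The heart of the argument is a \emph{propagation} or clopen-ness statement: within a connected component $\mathfrak C$, the non-degeneracy set is both open and closed relative to $\mathfrak C$, hence if it is non-empty it is all of $\mathfrak C$. Openness of non-degeneracy is essentially local and follows from the non-degeneracy inequality \eqref{nondeg}-type bound being stable under blow-up together with upper semicontinuity of the blow-up family; closedness is the delicate direction and is where one feeds in that the measure-theoretic boundary is non-empty on $\mathfrak C$, so that near such a point one has a one-sided flatness and can run an Alt--Caffarelli / Weiss-type improvement to rule out the density dropping to zero.

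Granting (1), part (2) should follow by a density argument: on the non-degenerate part the standard machinery (non-degeneracy plus the $\e\to 0$ limit structure, as in \cite{C-95}, \cite{CLW-uniform}) gives that $\po{u_0}$ has Lebesgue density bounded away from $0$ and from $1$ at every point of $\mathfrak C$, so $\mathfrak C$ lies in the measure-theoretic (indeed reduced) boundary up to an $\mathcal H^{N-1}$-null set, and then upgrading ``null set'' to ``everything'' uses again the homogeneity and connectedness of $\mathfrak C$. For part (3), once $\mathfrak C\setminus\{0\}$ is known to be reduced free boundary of a weak solution with the correct one-phase flux condition $|\na u_0^+|^2=2M$ (the two-phase case being handled analogously, or ruled out on a one-phase component), I would invoke the $\e$-regularity theorem for the singular perturbation limit --- flatness implies $C^{1,\alpha}$, as in \cite{C-95} and \cite{CJK} --- to conclude smoothness away from the vertex. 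The final sentence, that the singular set of $\fb{u_0}$ is a singleton in $\R^3$, then drops out: by Theorem A, $u_0$ is a one-homogeneous cone, so its only possible singular point is the vertex $0$; the point of (1)--(3) is precisely that no \emph{other} singularities can hide on a component carrying measure-theoretic boundary, and components carrying none are lower-dimensional wedge pieces that contribute nothing to the singular set in the relevant sense.

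The main obstacle I anticipate is the \emph{closedness} step in (1): showing that along a sequence $\sigma_k\to\sigma_\infty$ in $\mathfrak C$ of non-degenerate points the limit point $\sigma_\infty$ cannot be degenerate. Degeneracy would mean the positivity set pinches off to zero density at $\sigma_\infty$, yet connectivity of $\mathfrak C$ forces positivity density nearby; the tension must be resolved by a quantitative estimate --- most naturally a monotonicity/almost-monotonicity formula of Weiss or ACF type adapted to the perturbed equation (this is exactly where the Spruck-based monotonicity formula underlying Theorem A re-enters) --- showing the Weiss density is constant along $\mathfrak C$, so it cannot jump between the ``cone of degree one with nonempty interior'' value and the ``wedge'' value. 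A secondary technical point is justifying that blow-ups of $u_0$ are again admissible weak solutions with the same flux constant, so that the regularity theory applies verbatim; this should follow from the closure properties of the class of singular-perturbation limits, but needs to be stated carefully for the possibly two-phase situation.
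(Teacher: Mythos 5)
Your high-level architecture (partition the free boundary link, prove a propagation statement within a component, then invoke Alt--Caffarelli regularity for part (3)) is broadly correct, and your identification of closedness as the delicate step is astute. But the mechanism you propose for that step --- ``the Weiss density is constant along $\mathfrak C$, so it cannot jump'' --- is not what makes the argument work, and I don't see how to make it work on its own. The Spruck energy $S(x,0,u_0)$ is \emph{not} constant along $\fb{u_0}\setminus\{0\}$: it takes the value $M\,\mathrm{Area}(\S^2)$ on $\Gamma_{1/2}$ (density $1/2$) and $2M\,\mathrm{Area}(\S^2)$ on $\Gamma_1$ (density $1$). What the paper uses is \emph{upper semicontinuity} of $S$ (Lemma \ref{lem-semi}), which only rules out jumps in one direction: it shows $\Gamma_1$ is closed in $\fb{u_0}$ (Lemma \ref{lem-op-cls}), and also eliminates degenerate points entirely (Lemma \ref{lem-deg}, where $S=0$ at a degenerate point would contradict $\limsup S(x_k,0,u_0)\ge M\,\mathrm{Area}(\S^2)$ for non-degenerate $x_k\to$ degenerate point). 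So there is no residual ``zero-density'' piece to carry in the partition; the paper first kills all degenerate points globally, then works only with $\Gamma_{1/2}\cup\Gamma_1$.

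The genuinely missing idea in your proposal is the reverse direction: showing $\Gamma_{1/2}$ cannot accumulate at a point of $\Gamma_1$, which semicontinuity alone does not give. The paper obtains this geometrically. The gradient bound $|\nabla u_0|\le\sqrt{2M}$ (Lemma \ref{lem-limsup}) together with the classical Neumann condition $|\nabla u_0|=\sqrt{2M}$ on the regular part yields a one-sided minimal perimeter inequality $\H^2(S)\le\H^2(S')$ for competitors $S'\subset\po{u_0}$, hence one-sided variations show each smooth component of $\Gamma_{\frac12}$ has non-positive outward mean curvature, i.e.\ is a smooth \emph{convex} cone (Theorem \ref{thrm-convex}). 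If $x_0\in\Gamma_1\cap\mathfrak C$ were a limit of $\Gamma_{\frac12}\cap\mathfrak C$, this convexity would force $\{u_0\equiv 0\}^\circ$ to have positive Lebesgue density at $x_0$, contradicting density $1$ of $\po{u_0}$ there (Lemma \ref{lem-cmpt}). Without this convexity input --- which is special to the one-phase Bernoulli structure and the cone geometry --- the ``closedness'' you flag as the main obstacle remains an obstacle, and a bare Weiss/ACF monotonicity argument will not close it.
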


Theorem B implies that the reduced boundary propagates instantaneously in the
components of $\fb{u_0}$.
Our last result sheds some new light on the characterization 
of the blow-ups as minimal surfaces 
inside spheres with contact angle $\pi/2$.

\begin{CCC}
Let $u_0$ be as in Theorem B and $u_0=rg(\sigma), \sigma\in \S^2$ in spherical coordinates. Then
the parametrization $\X(\sigma)=\sigma g(\sigma)+\nabla_{\S^2}g(\sigma)$ defines
an almost conformal and minimal immersion. If $\po g$ is homeomorphic to a disk
then $u_0$ is a half-plane solution $\sqrt{2M}x_1^+$.
If $\po g$ is homeomorphic to a ring then
the only singular cone is the Alt-Caffarelli catenoid.

\end{CCC}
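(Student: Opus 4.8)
\medskip

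The strategy is to read the claimed geometry directly off the homogeneity and harmonicity of $u_0$. By Theorem~A together with Theorem~B, the blow-up $u_0\ge 0$ is $1$-homogeneous and harmonic in the open cone $\Omega_0=\po{u_0}$, with $|\nabla u_0|^2=2M$ along $\fb{u_0}$. Writing $u_0=rg(\sigma)$ and $\omega=\Omega_0\cap\S^2=\po g$, the equation $\Delta u_0=0$ becomes $\Delta_{\S^2}g+2g=0$ in $\omega$, while the free boundary condition becomes $|\nabla_{\S^2}g|^2=2M$ on $\partial\omega$. The point of departure is that $\nabla u_0$ is $0$-homogeneous, hence descends to $\S^2$, and its restriction to $\{|x|=1\}$ is exactly $\X(\sigma)=g(\sigma)\sigma+\nabla_{\S^2}g(\sigma)$; in particular $\X$ is smooth on $\omega$, and along $\partial\omega$ one has $g=0$, so there $\X=\nabla_{\S^2}g$ is tangent to $\S^2$ and $|\X|=\sqrt{2M}$.

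For the first assertion I would differentiate along $\S^2$: for $v\in T_\sigma\S^2$ one has $d\X_\sigma(v)=A\,v$ with $A:=D^2u_0(\sigma)$, so the geometry of $\X$ is governed entirely by the Hessian. Euler's identity for $1$-homogeneous functions gives $A\sigma=0$, and $\operatorname{tr}A=\Delta u_0=0$; thus $A$ maps $T_\sigma\S^2$ to itself as a symmetric trace-free operator, its eigenvalues are $\pm\mu(\sigma)$, and $A^2=\mu(\sigma)^2\,\mathrm{Id}$ on $T_\sigma\S^2$. Hence $\X^\ast\delta_{\R^3}=\mu^2\,g_{\S^2}$, i.e.\ $\X$ is conformal with factor $\mu^2$ and is an immersion wherever $\mu\ne 0$ (if $\mu\equiv 0$ on an open set, unique continuation forces $u_0$ to be linear, hence the half-plane solution $\sqrt{2M}\,x_1^+$ with $\omega$ a hemisphere). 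Each component of $\X$ is $\partial_ku_0|_{\S^2}$, a $0$-homogeneous harmonic function, so $\Delta_{\S^2}\X=0$ on $\omega$; since for a conformal immersion the mean curvature vector is $\mu^{-2}$ times the $g_{\S^2}$-Laplacian of $\X$, the surface $\X(\omega)$ is minimal. Finally, where $\mu\ne0$ the map $A$ is an isomorphism of $T_\sigma\S^2$, so $\X(\omega)$ has tangent plane $\sigma^{\perp}$ and unit normal $\sigma$ at $\X(\sigma)$; combined with $\X(\partial\omega)\subset\partial B_{\sqrt{2M}}$ and $\X\perp\sigma$ on $\partial\omega$, the radial (hence normal) direction of $\partial B_{\sqrt{2M}}$ is tangent to $\X(\omega)$ along the common boundary, so $\X(\omega)$ meets $\partial B_{\sqrt{2M}}$ orthogonally. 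This is exactly a minimal capillary surface in the sphere of radius $\sqrt{2M}$ with contact angle $\pi/2$, and $g=\langle\X,\sigma\rangle$ is its Minkowski support function.

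For (2), $\omega$ is a topological disk, and $g$ is a positive first Dirichlet eigenfunction of $-\Delta_{\S^2}$ on $\omega$ with eigenvalue $2$ and with constant Neumann data $|\partial_\nu g|=\sqrt{2M}$ on $\partial\omega$ --- an overdetermined Serrin-type problem on $\S^2$. The moving-plane method on the sphere forces $\omega$ to be a geodesic disk; a geodesic disk with first Dirichlet eigenvalue equal to $2$ must be a hemisphere (strictly smaller caps have $\lambda_1>2$), whose first eigenfunction is, up to a positive constant and a rotation, a coordinate function, so $g=\sqrt{2M}\,x_1$ and $u_0=\sqrt{2M}\,x_1^+$. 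Equivalently, and perhaps more transparently, $\X(\omega)$ is a free boundary minimal disk in $B_{\sqrt{2M}}$, hence by Nitsche's theorem the flat equatorial disk; this forces $\nabla u_0$ to take values in a fixed $2$-plane, which reduces $u_0$ to a $1$-homogeneous harmonic function of two variables, and the only nonnegative such function with $|\nabla u_0|^2=2M$ on its free boundary is $\sqrt{2M}\,x_1^+$.

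For (3), ``ring-type'' means $\omega$ is an annulus, so by part~(1) the surface $\X(\omega)$ is a free boundary minimal annulus in $B_{\sqrt{2M}}$ whose Gauss map is, by construction, the inclusion $\omega\hookrightarrow\S^2$. I would then invoke the Weierstrass representation: parametrizing by the global conformal coordinate on the annulus, the Gauss map is (conjugate to) that coordinate, and the single-valuedness and period conditions across the annulus force the Weierstrass data to be those of a catenoid; the orthogonality condition on the two boundary circles, together with the positivity of $g$ in $\omega$, then fixes the two moduli and identifies $\X(\omega)$ with the critical catenoid --- equivalently, $u_0$ with the Alt--Caffarelli singular cone. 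The main obstacle is precisely this final rigidity step: excluding non-rotationally-symmetric free boundary minimal annuli is delicate in general, and here it must be carried out by exploiting the very strong constraint that $\X$ is parametrized by its own Gauss map (equivalently, that $\X(\omega)$ is the link of a minimal cone in $\R^3$), using this to pin down the Weierstrass data and hence the catenoid uniquely.
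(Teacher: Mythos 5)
Your derivation of the first assertion via the Hessian is clean and arguably more self-contained than the paper's route: the paper invokes Reznikov's theorem to get a conformal minimal immersion off a locally finite branch set, then excludes branch points via a computation showing the Hessian of $u_0$ vanishes along a full ray, followed by Han's strong unique continuation. Your observation that $d\X_\sigma = D^2u_0(\sigma)$ is symmetric, trace-free, and kills $\sigma$, hence conformal with factor $\mu^2$ on $T_\sigma\S^2$, is a nice shortcut. However, there is a real gap in how you handle branch points: you only rule out $\mu\equiv 0$ on an open set, leaving isolated zeros of $\mu$ open. To match the statement of Theorem C you must show that $\X$ is an immersion everywhere, and the argument in the paper is precisely tailored to this: at a putative branch point one uses zero-homogeneity of $\nabla u_0$ to show that the entire Hessian $D^2 u_0$ vanishes along the whole ray through the branch point (a set of positive $\H^1$ measure), and then Han's theorem forces $u_0\equiv 0$. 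You should incorporate such a ray-propagation argument before invoking isolated-singularity removability.

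A more serious omission is embeddedness. You apply Nitsche's classification of stationary disk-type partitioning surfaces, and later sketch a Weierstrass argument for the annulus, but both of these require knowing that $\M=\X(\omega)$ is embedded, not merely immersed. The paper devotes an entire lemma to this, building the dual cones $\C_i^\star$, showing they are strictly convex and $C^{1,1}$ by propagating the strict curvature bound of Lemma~\ref{lem-pos-con}, and then deducing that $\M$ is star-shaped with respect to the origin. Your proposal silently assumes embeddedness; without it, the conformal immersion could a priori self-intersect.

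For the ring case your Weierstrass sketch is not actually a proof, and you acknowledge this. The paper's argument is quite different and is carried to completion: it uses Stokes' theorem $\int_{\gamma_1}n^\ast+\int_{\gamma_2}n^\ast=0$ to locate a diameter strictly inside both dual cones, then runs Aleksandrov's moving plane method (using Serrin's boundary point lemma on the harmonic functions $u_0$ and its reflection, to work around the limited $C^{1,1}$ regularity of $\partial\M$), and concludes with Schoen's corollary that a free boundary minimal annulus with circular boundary curves in parallel planes is a piece of the catenoid. You would do well to replace the Weierstrass sketch with a moving-plane argument, or at the very least make precise how the constraint "$\X$ is its own Gauss map" closes the period problem; as written, the final rigidity step is a genuine gap. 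Your alternative approach for the disk case — first eigenvalue of a spherical cap plus Serrin on $\S^2$ — is a plausible standalone route, whereas the paper reaches the disk conclusion by combining Nitsche with an ACF-monotonicity lemma (Lemma~\ref{lem-cap}) showing that if $\supp g$ is contained in a hemisphere then $u_0$ is a half-plane; either route works once embeddedness and branch-point-freeness are in hand.
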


Observe that $\lap u_0=0$ implies that the spherical part $g$ satisfies the follwoing equation on the sphere
\[\lap_{\S^{N-1}}g+(N-1)g=0,\]
where $\lap_{\S^{N-1}}$ is the Laplace-Beltrami operator.
If we regard $g$ as the support function of some embedded hypersurface $\M$ then
the matrix $[\nabla_{ij}g+\delta_{ij}g]^{-1}$ gives the Weingarten mapping and its eigenvalues are the
principal curvatures $k_1, \dots, k_{N-1}$ of $\M$. 
If $N=3$ then we have that
\[0=\lap_{\S^2}g+2g={\rm{trace}}[\nabla_{ij}g+\delta_{ij}g]=\frac1{k_1}+\frac1{k_2}=\frac{k_1+k_2}{k_1k_2}\]
implying that the mean curvature is zero at the points where the Gauss curvature $k_1k_2$ does not vanish.
This is how the minimal surfaces enter into the game. One of the main obstacles is to show
that the surface parametrized by $\X(\sigma)=\nabla u_0(\sigma)$  is embedded. Then the
classification for the disk-type domains $\po{g}$ follows from a result of Nitsche \cite{Nitsche}. To prove
the last statement of Theorem C we will use the moving plane method.
It is worthwhile to point out that the results of this paper can be extended to other classes 
of  stationary points. For instance, the weak solutions introduced in \cite{AC} 
can be analyzed in similar way provided that the zero set has uniformly positive 
Lebesgue density at free boundary points in order to guarantee that the class of 
weak solution is closed with respect to  blow-ups, see example 5.8 in \cite{AC}. 

\subsection*{Related works}
In \cite{HPP} F. H\'elein, L. Hauswirth and F. Pacard have considered the following 
overdetermined problem
\begin{equation}\label{except}
\left\{
\arraycolsep=0.5pt
\begin{array}{lll}
\lap u(x)=0 \ \ & \mbox{in}\quad \Omega, \\
u>0\ \ &\mbox{in}\quad \Omega,\\
u(x)=0, \ |\na u|=1 \ \ & \mbox{on}\quad \p\Omega, \\
\end{array}
\right.
\end{equation}
where $\Om$ is a smooth domain and the boundary conditions are satisfied in the classical sense. 
A domain $\Om$ admitting a solution $u$ to \eqref{except} is called exceptional. Note that every nonnegative smooth solution of the limiting singular perturbation problem solves \eqref{except} with $M=\frac12$.   In \cite{HPP} the authors have  
constructed a number of examples of exceptional domains  and proposed to classify them. 
In particular, they proved that if $\Omega\subset \R^2$ is conformal to half-plane such that $u$ is strictly monotone in one fixed direction then 
$\Omega$ is a half-space, \cite[Proposition 6.1]{HPP}.
However the general problem remained open. 

\medskip 

%% T

Later M. Traizet showed that the smoothness assumption can be relaxed, namely 
if $\Om\subset \R^2$ has $C^0$ boundary and the boundary conditions are still satisfied in the classical sense then 
$\Om$ is real analytic \cite[Proposition 1]{T}. 
Under various topological conditions on the two dimensional domain $\Om\subset \R^2$ (such as finite connectivity and  periodicity) M. Traizet classified 
the possible exceptional domains. One of his remarkable results is that from $\Omega$ one can construct a {\bf \it complete} minimal surface using the Weierstrass representation formula
\cite[Theorem 9]{T}.
%% Kh
Another classification result in $\R^2$, under
stronger topological hypotheses than in \cite{T}, is given by D. Khavinson, E. Lundberg and R. Teodorescu \cite{Khavinson}.
Moreover, their results in simply connected case  are stronger because unlike M. Traizet they do not assume the finite connectivity (i.e. $\p \Om$ has finite number of components). As opposed to these results (1) we do not assume any regularity of the  free boundary (which plays the role of $\p \Om$ in \eqref{except}), (2) the Neumann condition is not satisfied in the classical sense, (3) the minimal surface we construct in Theorem C is {\it not complete} and it is a capillary surface inside sphere, and (4) our techniques do not impose any restriction on the dimension.  
Note that, in \cite{HPP} the authors  suggested to study more general classes of exceptional domains:
if  $(M, g)$ is an $m$-dimensional Riemannian manifold  admitting  a harmonic function 
with zero Dirichlet  and constant Neumann boundary data then $M$ is called exceptional
and $u$ a roof function. In this context Theorem C provides a way of constructing roof function on the 
sphere from the blow-ups of stationary points of the Alt-Caffarelli functional.

One may consider higher order critical points as well, such as mountain passes (which are, in fact, minimizers over some subspace of admissible functions) for which one has non-degeneracy and nontrivial Lebesgue density properties \cite[Propositions 1.7-5.1]{JK}. 
Observe that neither of these properties is available for our solutions as Theorem 6.3 and Remark 5.1 in \cite{CLW-uniform} indicate,  and in the present work we do not impose any additional 
assumptions on our stationary points of this kind.  

%% HD

\medskip

It seems that the only  result in high dimensions that appears in \cite{HPP}, \cite{Khavinson} and \cite{T}  states that if the complement of $\Omega$ is connected and has $C^{2,\alpha}$ boundary, then $\Omega$ is the exterior of a ball \cite[Theorem 7.1]{Khavinson}. The restriction $\Omega\subset \R^2$ is because the authors have mainly  used the techniques from complex analysis. 
Our approach does not have this restriction  since our main tool is the representation of the solution in terms of the Minkowski support function.
We remark that using our method in high dimensions we can  construct a surface $\M$ inside the sphere of 
radius $\sqrt{2M}$ such that the sum of its principal radii of curvature is zero,  and $\M$ is transversal to the sphere. 

Finally,  we point out that our approach may lead to a  new characterization  of global minimizers in $\R^3$ \cite{CJK}. Indeed,   Theorem 6 from \cite{RV} implies that 
the capillary surface $\M$ in Theorem C associated with the blow-up limit must be totally geodesic (i.e. the second fundamental form is identically zero). Consequently, the blow-up must be the half-plane solution.

\smallskip

The paper is organized as follows:  In Section 2 we  set up  some basic 
notation which will be used throughout the paper.
Section 3 is devoted to the study of a new monotone quantity $s(x_0,u, r)$. This interesting quantity is derived from a 
computation of J. Spruck \cite{Spruck}. 
Among other things, properties of $s$ 
imply that every blow-up of $u$ is either homogeneous function of 
degree one or identically zero.
Section 4 contains the proof of Theorem A.
In Section 5 
we develop a new method of stratification of the free boundary points and prove Theorem B.  
Section 6 contains the proof of Theorem C.
For the convenience of the reader in Appendix we repeat the relevant 
material from \cite{CLW-uniform} without proofs. 
%-------------------------------------
%    SECTION
%-------------------------------------
\section{Notation}
Throughout the paper $N$ will denote
the spatial dimension.  
$B_r(x_0)=\{x\in \R^N, s.t. \ |x-x_0|<r\}$ denotes the open ball of 
radius $r>0$ centered at $x_0\in \R^N$.
The $s$-dimensional Hausdorff measure is denoted by
$\H^s$, the unit sphere by $\S^{N-1}\subset \R^{N}$, and the
characteristic function of the set $D$ by $\chi_{D}$. We also
\[M=\int_0^1\beta(t)dt.\]
Sometimes we will denote $x=(x_1, x')$ where $x'\in \R^{N-1}$.
For given function $v$, we will denote $v^+=\max(0, v)$ and
$v^-=\max(0, -v)$. Finally, we say that $v\in C^{0,1}_{loc}(\mathcal D)$
if for every $\mathcal D'\Subset \mathcal D$, there is a constant
$L(\mathcal D')$ such that
\[|v(x)-v(y)|\leq L(\mathcal D')|x-y|, \quad \forall x, y\in \mathcal D.\]
If $v\in C^{0, 1}_{loc}(\mathcal D)$ then we say that
$v$ is locally Lipschitz continuous in $\mathcal D$.
For $x=(x_1, \dots, x_N)$ and fixed $x_0\in \R^N$
$(x-x_0)_1^+$ denotes the positive part of the first coordinate of $x-x_0$.
If $u(x_0)=0$ then  $(u(x))_r=u(x_0+rx)/r, r>0$ denotes the scaled function at $x_0$.
For given $r_j\to 0$ the sequence $(u(x))_{r_j}$ 
is called a blow-up sequence and its limit $u_0$ a blow-up of $u$ at $x_0$. 

\section{Monotonicity formula of Spruck}
It is convenient to work with  a weaker definition of non-degeneracy which only assures
that the blow-up does not vanish identically. 
\begin{defn}
We say that $u$ is degenerate at $x_0\in \fb u$ if $\liminf\limits_{r\to 0}\frac1r \fint\limits_{B_{r}(x_0)}u^+=0$.
\end{defn}
Observe that $u^+(x)=o(|x-x_0|)$ near the degenerate point $x_0$ because $u^+$ is subharmonic.

It is known that the solutions of \eqref{pde-0} are locally Lipschitz continuous, see Appendix Proposition \ref{prop1}. Consequently, there is a subsequence $\e_j\to 0$ such that 
$u_{\e_j}\to u$ locally uniformly. Furthermore,  $u$ is a stationary  point of the Alt-Caffarelli problem in some weak sense and the blow-up of $u$ can be approximated by some scaled family of 
solutions to \eqref{pde-0}, see Appendix Propositions \ref{prop-1st-blow} and \ref{prop-2nd-blow}.
\begin{prop}\label{prop1-1}
Let $u$ be a limit of some sequence  $u_{\e_j}$ as in Proposition \ref{prop-compactness}.
Then any blow-up of $u$ at a non-degenerate point is a homogeneous function of degree one.
\end{prop}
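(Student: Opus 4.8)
The proof hinges on establishing a Weiss-type monotonicity formula for the approximating solutions $u_\e$, derived from Spruck's computation, which survives the limit $\e\to 0$. First I would define, for a limit function $u$ and a non-degenerate free boundary point $x_0$ (taken to be the origin), the scaled functions $u_{x_0,\rho}(x)=u(x_0+\rho x)/\rho$ and note that by the Lipschitz bound (Appendix, Proposition \ref{prop1}) and non-degeneracy, a subsequence converges locally uniformly (and in $H^1_{loc}$) to a blow-up $u_0\not\equiv 0$. The heart of the argument is to show $u_0$ is homogeneous of degree one, and for this I would introduce the functional
\begin{equation}
W(r,u)=\frac{1}{r^N}\int_{B_r}\left(\frac{|\na u|^2}{2}+\B(u/\e)\right)dx-\frac{1}{r^{N+1}}\int_{\p B_r}\frac{u^2}{2}\,d\H^{N-1}
\end{equation}
(with the appropriate normalization for the limiting problem, where $\B(u/\e)$ is replaced by $M\chi_{\{u>0\}}$), and prove that $r\mapsto W(r,u_\e)$ is monotone non-decreasing up to an error term that vanishes as $\e\to 0$. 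This is exactly where Spruck's computation enters: differentiating $W$ in $r$ produces, after an integration by parts using $\lap u_\e=\beta_\e(u_\e)$, a boundary integral of $(\p_\nu u_\e - u_\e/r)^2$ (the good monotone term) plus terms involving $\beta_\e(u_\e)$ and $\B(u_\e/\e)$ that one must control; Spruck's identity is the device that packages these correction terms favorably.

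Once the monotonicity of the (limiting) Weiss energy $W(r,u_0)$ is in hand, the standard argument applies: $W(0^+,u_0)$ exists and is finite, and by the scaling invariance of $W$ under $u_0\mapsto u_0(\rho\,\cdot)/\rho$ together with the convergence $u_{x_0,\rho}\to u_0$, the function $r\mapsto W(r,u_0)$ is \emph{constant}. Feeding this back into the differential inequality forces the good term $(\p_\nu u_0 - u_0/r)^2$ to vanish identically on almost every sphere, i.e. $\p_\nu u_0 = u_0/r$ in the sense that $x\cdot\na u_0 = u_0$, which is precisely the Euler relation characterizing homogeneity of degree one. A minor technical point is that $u_0$ need not be $C^1$, so this identity should be interpreted weakly (via the $H^1_{loc}$ convergence and testing against radial vector fields), but this is routine.

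**Main obstacle.** The delicate step is the passage to the limit in the monotonicity formula — specifically, controlling the terms $\frac1{r^N}\int_{B_r}\B(u_\e/\e)$ and the $\beta_\e(u_\e)$-terms generated by differentiation. The integrand $\B(u_\e/\e)$ is supported in the thin strip $\{0<u_\e<\e\}$ and is $O(1)$ there, so one needs a Lipschitz estimate plus a bound on $|\{0<u_\e<\e\}\cap B_r|=O(\e)$ (which follows from non-degeneracy and the co-area formula applied to $u_\e$) to see that $\int_{B_r}\B(u_\e/\e)\to M\,|\{u_0>0\}\cap B_r|$ and that the error in the monotonicity, of order $\e/r^{N+1}$ or similar, is negligible at each fixed scale $r>0$. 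The subtlety is uniformity: one wants the error to be controlled uniformly as $r$ ranges over a compact subinterval of $(0,1)$ while $\e\to 0$ along the blow-up sequence, and here the scaling interacts with $\e$, so the clean statement is that for the \emph{rescaled} solutions $u_{\e_j,\rho_j}$ the error tends to zero — this is the content that must be extracted from the Appendix compactness results (Proposition \ref{prop-compactness}) and is the genuinely new ingredient beyond the classical Weiss/Alt--Caffarelli machinery. Once this is secured, the conclusion that every blow-up at a non-degenerate point is homogeneous of degree one follows as above.
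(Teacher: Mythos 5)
Your overall strategy --- establish a monotonicity formula at the $\e$-level and pass to the limit --- is the right one, but the formula you propose and the route to it are genuinely different from the paper's, and the difference matters.

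The paper does not use the classical Weiss bulk energy $W(r,u)=\frac{1}{r^N}\int_{B_r}\left(\frac{|\na u|^2}{2}+\B(u/\e)\right)-\frac{1}{r^{N+1}}\int_{\p B_r}\frac{u^2}{2}$, and it explicitly remarks that it avoids the Pohozaev-type identity underlying Weiss's formula. Instead, it follows Spruck's polar-decomposition trick: set $v_\e(t,\sigma)=u_\e(e^{-t},\sigma)/e^{-t}$, rewrite $\lap u_\e=\beta_\e(u_\e)$ as a PDE in $(t,\sigma)$, multiply by $\p_t v_\e$, and integrate over $\S^{N-1}\times[T_0,T]$. The resulting monotone quantity
\[
S_\e(r)=\int_{\S^{N-1}}\left\{2\B(u_\e/\e)+\frac{|\na_\sigma u_\e|^2}{r^2}-(N-1)\frac{u_\e^2}{r^2}-\left(\p_ru_\e-\frac{u_\e}{r}\right)^2\right\}d\sigma
\]
is an integral over a \emph{sphere}, not a ball, and --- this is the crucial point you miss --- it is \emph{exactly} monotone for every fixed $\e>0$, with no error term whatsoever. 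The reason is that the $\beta$-contribution generated by the computation appears as $\beta\!\left(\frac{e^{-t}}{\e}v_\e\right)\frac{e^{-t}}{\e}v_\e\ge 0$, i.e. with the favorable sign. There is nothing to estimate in the transition layer $\{0<u_\e<\e\}$, and no uniformity-in-$\e$-and-$r$ issue of the kind you describe as ``the genuinely new ingredient.''

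This makes your ``main obstacle'' paragraph a bit of a red herring: the delicate error control you outline (measure of $\{0<u_\e<\e\}$, $O(\e/r^{N+1})$ errors, etc.) may or may not be carried through, but it is not needed in the paper's argument, and you give no proof that it works. You also conflate two distinct mechanisms when you say ``differentiating $W$ in $r$ produces, after an integration by parts\ldots Spruck's identity is the device that packages these correction terms favorably'' --- differentiating the Weiss energy is the Pohozaev/domain-variation route, which is precisely what the paper \emph{avoids}; Spruck's identity is a separate device built on the substitution $v=u/r$ and testing against $\p_t v$. Finally, a small factual slip: $\B(u_\e/\e)$ is not supported in the strip $\{0<u_\e<\e\}$ --- it equals $M$ on all of $\{u_\e\ge\e\}$; only the \emph{difference} $\B(u_\e/\e)-M\chi_{\{u_\e>0\}}$ lives in the strip. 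The endgame you describe (monotone energy is scale-invariant, blow-up limit makes it constant, constancy forces $x\cdot\na u_0=u_0$) is the same as the paper's; the derivation of the monotone quantity is where the approaches genuinely diverge, and the paper's is both cleaner and error-free.
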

\begin{proof}
To fix the ideas we assume that $0\in \fb u$  is a non-degenerate point. 
We begin with writing the Laplacian in polar coordinates
\begin{equation}
\lap u=u_{rr}+\frac{N-1}ru_r+\frac1{r^2}\lap_{\mathbb S^{N-1}}u
\end{equation}
and then introducing the auxiliary function
\begin{equation}
v(t, \sigma)=\frac{u(r, \sigma)}{r}, \quad r=e^{-t}.
\end{equation}
A straightforward computation yields
\begin{eqnarray*}
v_t&=&-u_r+v,\\
v_\sigma&=&\frac{u_\sigma}r,\\
v_{tt}&=&u_{rr}r+v_t,\\
\lap_{\mathbb S^{N-1}}v&=&\frac1r\lap_{\mathbb S^{N-1}}u,
\end{eqnarray*}
where, with some abuse of notation, $v_{\sigma}$ denotes the gradient of $v$ computed on the sphere.
Rewriting the equation $\Delta u_{\e}=\beta_\e(u_\e)$ in $t$ and $\sigma$ derivatives we obtain
\begin{equation*}
\frac1r[(N-1)(v-\p_tv_\e)+\p^2_tv_\e-\p_tv_\e+\lap_{\mathbb S^{N-1}}v_\e]=
\frac1\e\beta\left(\frac{r}\e v_\e\right).
\end{equation*}
\medskip
Next, we multiply both sides of the last equation by $\p_tv_\e$ to get
\begin{equation}\label{v-t}
{\p_tv_\e}[(N-1)(v-\p_tv_\e)+\p^2_tv_\e-\p_tv_\e+\lap_{\mathbb S^{N-1}}v_\e]=
\p_t v_\e\frac r\e\beta\left(\frac{r}\e v_\e\right).
\end{equation}
The right hand side of \eqref{v-t} can be further transformed as follows
\begin{eqnarray*}
\frac r\e\beta\left(\frac{e^{-t}}\e v_\e\right)\p_tv_\e
&=&\beta\left(\frac{e^{-t}}\e v_\e\right)\left[\frac{e^{-t}}\e \p_tv_\e-\frac{e^{-t}}\e v_\e\right]+
\beta\left(\frac{e^{-t}}\e v_\e\right)\frac{e^{-t}}{\e}v_\e\\
&=&\p_t\int_0^{\frac{e^{-t}}{\e}v_\e}\beta(s)ds+\beta\left(\frac{e^{-t}}\e v_\e\right)\frac{e^{-t}}{\e}v_\e\\
&=&\p_t\B \left(\frac{e^{-t}}{\e}v_\e\right)+\beta\left(\frac{e^{-t}}\e v_\e\right)\frac{e^{-t}}{\e}v_\e\\
&\equiv& I_1.
\end{eqnarray*}
It is important to note that by our assumption \eqref{beta} the last term is nonnegative, in other words
\begin{equation}\label{nonneg}
\beta\left(\frac{e^{-t}}\e v_\e\right)\frac{e^{-t}}{\e}v_\e\ge 0.
\end{equation}
Moreover, we have
\begin{eqnarray*}
I_2&\equiv&[(N-1)v_\e-N\p_tv_\e+\p_t^2v_\e+\lap_{\mathbb S^{N-1}}v_\e]\p_tv_\e\\
&=& (N-1)\p_t\left( \frac{v_\e^2}2\right)-Nv_t^2+\p_t\left(\frac{(\p_tv_\e)^2}2\right)+\p_tv_\e\lap_{\mathbb S^{N-1}}v_\e.
\end{eqnarray*}
Next we integrate the identity
$$I_2=rI_1$$
over $\mathbb S^{N-1}$ and then over $[T_0, T]$
in order to get
\begin{eqnarray*}
\left.(N-1)\int_{\mathbb S^{N-1}} \frac{v_\e^2}2\right|_{T_0}^T-N\int_{T_0}^T\int_{\mathbb S^{N-1}}(\p_tv_\e)^2+\left.\int_{\mathbb S^{N-1}}\frac{(\p_tv_\e)^2}2\right|_{T_0}^T+\int_{T_0}^T\int_{\mathbb S^{N-1}}\p_tv_\e\lap_{\mathbb S^{N-1}}v_\e\\
= \int_{\mathbb S^{N-1}}\left. \mathcal B \left(\frac{e^{-t}}{\e}v_\e\right)\right|^T_{T_0}
+\int_{T_0}^{T}\int_{\S^{N-1}}\beta \left(\frac{r}{\e}v_\e\right)\frac r\e v_\e.
\end{eqnarray*}

Note that
\begin{eqnarray}
\int_{T_0}^T\int_{\mathbb S^{N-1}}\p_tv_\e\lap_{\mathbb S^{N-1}}v_\e=-\left.\frac12\int_{\mathbb S^{N-1}}
|\nabla_\sigma v_\e|^2\right|_{T_0}^T.
\end{eqnarray}
Rearranging the terms and utilizing \eqref{nonneg} we get the identity
\begin{eqnarray}\nonumber
N\int_{T_0}^T\int_{\mathbb S^{N-1}}(\p_tv_\e)^2+\int_{T_0}^{T}\int_{\S^{N-1}}\beta \left(\frac{r}{\e}v_\e\right)\frac r\e v_\e&=&\left.(N-1)\int_{\mathbb S^{N-1}} \frac{v_\e^2}2\right|_{T_0}^T+
\left.\int_{\mathbb S^{N-1}}\frac{(\p_tv_\e)^2}2\right|_{T_0}^T\\\label{crazy-iden}
&&-\left.\frac12\int_{\mathbb S^{N-1}}
|\nabla_\sigma v_\e|^2\right|_{T_0}^T
- \int_{\mathbb S^{N-1}}\left. \mathcal B \left(\frac{e^{-t}}{\e}v_\e\right)\right|^T_{T_0}.
\end{eqnarray}

From here it follows that 
\begin{equation}
\int_{T_0}^T\int_{\mathbb S^{N-1}}\left(\p_tv_\e\right)^2\le C
\end{equation}
where $C$ depends on $\|\na u_\e\|_\infty, M, N$ but not on $\e, T_0$ or $T$.

Letting $\e\to 0$ 
we conclude
\begin{equation}
\int_{T_0}^T\int_{\mathbb S^{N-1}}\left(\p_tv\right)^2\le C
\end{equation}
where $v(t,\sigma)=\frac{u(r, \sigma)}r$.
But $\p_t v=-u_r+\frac{u}r$  implying that
\begin{equation}
\int_{T_0}^\infty\int_{\mathbb S^{N-1}}\left(u_r-\frac{u}r\right)^2dtd\sigma\le C.
\end{equation}
The proof of Theorem A follows if we note that $-u_r+\frac{u}r=0$ is the Euler equation for the homogeneous functions
of degree one.
\end{proof}

In the proof of Proposition \ref{prop1-1} we used Spruck's original computation \cite{Spruck}.
The identity \eqref{crazy-iden} can be interpreted as a local energy 
balance for $u_\e$. Moreover, using \eqref{crazy-iden} we can construct a monotone quantity which has some remarkable properties.
\begin{cor}\label{cor-mon}
Suppose $0\in \fb u$ and let $(r, \sigma), \sigma\in \S^{N-1}$ be the spherical coordinates. Introduce 
\begin{equation}\label{S-eps}
S_\e(r)=\int_{\S^{N-1}}\left\{2\B(u_\e(r, \sigma)/\e)+\frac1{r^2}|\na_\sigma u_\e|^2-(N-1)\frac{u_\e^2(r, \sigma)}{r^2}-
\left(\p_ru_\e(r,\sigma)-\frac{u_\e(r, \sigma)}{r}\right)^2\right\}d\sigma.
\end{equation}
\begin{itemize}
\item Then $S_\e(r)$ is nondecreasing in $r$.
\item  Moreover, if $u_{\e_j}\to u$ for some subsequence $\e_j\to 0$, then
$S_{\e_j}(r)\to S(r)$ for a.e. $r$ where
\begin{equation}\label{S}
S(r)=\int_{\S^{N-1}}\left\{2M\I{u}+\frac1{r^2}|\na_\sigma u|^2-(N-1)\frac{u^2(r, \sigma)}{r^2}-
\left(\p_ru(r,\sigma)-\frac{u(r, \sigma)}{r}\right)^2\right\}d\sigma.
\end{equation}
In particular, $S(r)$ is nondecreasing function of $r$.
\item $S(r)$ is constant if and only if $u$ is a homogenous function of degree one.
\end{itemize}
\end{cor}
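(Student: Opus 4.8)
The plan is to deduce Corollary \ref{cor-mon} directly from the identity \eqref{crazy-iden} established in the proof of Proposition \ref{prop1-1}, by recognizing that the right-hand side of \eqref{crazy-iden} is precisely the difference $S_\e(T)-S_\e(T_0)$ after changing back from the $t$-variable to the radial variable $r=e^{-t}$. First I would record the change of variables carefully: since $v(t,\sigma)=u(r,\sigma)/r$ with $r=e^{-t}$, we have $\partial_t v = -u_r + u/r = -r(\partial_r u - u/r)$, so $(\partial_t v)^2 = r^2(\partial_r u - u/r)^2$; also $|\nabla_\sigma v|^2 = r^{-2}|\nabla_\sigma u|^2 \cdot r^2 $—wait, more precisely $v_\sigma = u_\sigma/r$ gives $|\nabla_\sigma v_\e|^2 = |\nabla_\sigma u_\e|^2/r^2$, and $v_\e^2 = u_\e^2/r^2$, and $\mathcal B(e^{-t}v_\e/\e) = \mathcal B(u_\e/\e)$. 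Substituting these into the boundary terms on the right-hand side of \eqref{crazy-iden} shows that
\[
\text{RHS of }\eqref{crazy-iden} \;=\; -\tfrac12\Bigl[S_\e(r)\Bigr]_{r=e^{-T_0}}^{r=e^{-T}},
\]
so that, up to a harmless overall sign and the factor $\tfrac12$, the left-hand side of \eqref{crazy-iden}, which is manifestly nonnegative by \eqref{nonneg} and the nonnegativity of $\beta$, equals $\tfrac12\bigl(S_\e(e^{-T_0}) - S_\e(e^{-T})\bigr)$. Since $T > T_0$ corresponds to $e^{-T} < e^{-T_0}$, this is exactly the statement that $S_\e$ is nondecreasing in $r$.

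Second, for the passage to the limit I would invoke the compactness and convergence results from the Appendix (Proposition \ref{prop-compactness}): the family $u_{\e_j}$ is uniformly Lipschitz on compact subsets and converges locally uniformly to $u$, the gradients $\nabla u_{\e_j}$ converge in $L^2_{loc}$ (and a.e. after a further subsequence), and $\chi_{\{u_{\e_j}>0\}} \to \chi_{\{u>0\}}$ in $L^1_{loc}$ together with $\mathcal B(u_{\e_j}/\e_j) \to M\chi_{\{u>0\}}$ in $L^1_{loc}$ (this last being the standard fact about the approximating family, since $\mathcal B(t/\e) \to M$ for $t>0$ and $=0$ for $t<0$ as $\e\to0$, boundedly). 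By Fubini, for a.e. radius $r$ the spherical integrals converge, giving $S_{\e_j}(r) \to S(r)$ for a.e. $r$; since each $S_{\e_j}$ is monotone, the a.e.\ limit $S$ is monotone as well, and in fact monotonicity of $S$ holds for every $r$ after redefining $S$ on the null set by, say, left limits. This establishes the first two bullet points.

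Third, for the rigidity statement—$S$ constant iff $u$ is homogeneous of degree one—I would argue as follows. In the limit $\e\to0$, the identity \eqref{crazy-iden} passes to
\[
N\int_{T_0}^T\!\!\int_{\mathbb S^{N-1}}(\partial_t v)^2 \;=\; \tfrac12\bigl(S(e^{-T_0}) - S(e^{-T})\bigr)
\]
(the $\beta(\cdot)\tfrac r\e v_\e$ term drops out in the limit, as do the auxiliary boundary terms once reassembled into $S$), where $v(t,\sigma)=u(r,\sigma)/r$. Hence if $S$ is constant on an interval of radii, then $\partial_t v \equiv 0$ there, i.e.\ $u_r - u/r \equiv 0$, whose solutions are exactly the functions homogeneous of degree one in that annulus; conversely if $u$ is homogeneous of degree one then $u_r = u/r$ everywhere, each integrand in \eqref{S} becomes $r$-independent (using that $|\nabla_\sigma u|^2/r^2$, $u^2/r^2$ and $\chi_{\{u>0\}}$ are all scale-invariant for degree-one homogeneous $u$), so $S$ is constant. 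The main obstacle is the careful bookkeeping in the change of variables $t \leftrightarrow r$ and, more substantively, justifying the limit of $S_{\e_j}$: one must ensure the boundary (non-integrated-in-$t$) terms and the $\mathcal B$-term all converge in $L^1$ on spheres of a.e.\ radius, which relies on the precise compactness statements quoted from the Appendix; everything else is a direct rearrangement of the identity already derived above.
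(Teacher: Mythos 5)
Your proposal is correct and follows essentially the same route as the paper: both read off monotonicity of $S_\e$ directly from the identity \eqref{crazy-iden} after converting back to the $r$-variable, pass to the limit in $\e$ via the compactness statements in the Appendix (the paper does this by first mollifying in $r$ and then sending $\delta\to 0$, while you invoke a.e.\ convergence on spheres via Fubini, which amounts to the same thing), and derive the rigidity from the vanishing of $\int(\partial_t v)^2$. One small slip to flag: you write $\partial_t v = -u_r + u/r = -r(\partial_r u - u/r)$ and conclude $(\partial_t v)^2 = r^2(\partial_r u - u/r)^2$; in fact $-u_r + u/r = -(\partial_r u - u/r)$ (no factor of $r$), so $(\partial_t v)^2 = (\partial_r u - u/r)^2$, which is what is actually needed to identify the boundary terms of \eqref{crazy-iden} with $-\tfrac12[S_\e(r)]$. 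Your final conclusion uses the correct formula, so this does not affect the validity of the argument, but the intermediate line as written is inconsistent with it.
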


\begin{proof}
By setting $r_1=e^{-T}, r_2=e^{-T_0}$ and noting that
$r_1<r_2$ if $T>T_0$ we obtain from \eqref{crazy-iden}
\[S_\e(r_2) -S_\e(r_1)=2N\int_{T_0}^T\int_{\mathbb S^{N-1}}(\p_tv_\e)^2+2\int_{T_0}^{T}\int_{\S^{N-1}}\beta \left(\frac{r}{\e}v_\e\right)\frac r\e v_\e\ge 0\]
where we applied \eqref{nonneg} and hence the first claim follows. The second part follows from Propositions \ref{prop1} and \ref{prop-compactness}.
Indeed, integrating $S_\e(r)\leq S_\e(r+t), t\ge 0$ over $[r_1-\delta, r_1+\delta]$ we infer
\[\frac1{2\delta}\int_{r_1-\delta}^{r_1+\delta}S_\e(r)dr\le \frac1{2\delta}\int_{r_1-\delta}^{r_1+\delta}S_\e(r+t)dr.\]
Then first letting $\e\to 0$ and utilizing Proposition \ref{prop-compactness} together with \eqref{grad-est} and then sending $\delta\to 0$ we infer that
$S(r)$ is nondecreasing for a.e. $r$.
Finally the last part follows as in the proof of Proposition \ref{prop1-1}.
\end{proof}

As one can see we did not use the Pohozhaev identity as opposed to the monotonicity formula in \cite{Weiss}.
Spruck's monotonicity formula enjoys a remarkable property.

\begin{lem}\label{lem-semi}
Let $u$ be as in Proposition \ref{prop1-1}. Set $S(x_0, r, u)$ for $S(r)$ defined by  the sphere centered at $x_0\in\fb u$.
Suppose $x_k\in \fb u$ such that $x_k\to x_0$ then
\[\limsup_{x_k\to x_0}S(x_k, 0, u)\le S(x_0, 0, u).\]
\end{lem}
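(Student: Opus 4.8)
The plan is to exploit the monotonicity of $r\mapsto S(x,r,u)$ established in Corollary \ref{cor-mon} together with continuity of $S(x,r,u)$ in $x$ for each fixed positive radius $r$. For a free boundary point $x$ and $r>0$ small enough that $B_r(x)\Subset B_1$, set $S(x,0,u):=\lim_{r\to 0^+}S(x,r,u)$, which exists as a decreasing-in-$r$-to-$0$ limit (the density at scale zero) by the first part of Corollary \ref{cor-mon}. The key point is that for each fixed $r>0$ the quantity $S(x,r,u)$ is continuous in $x$: writing \eqref{S} with sphere centered at $x$, the integrand involves $u$ and $\nabla u$ on $\partial B_r(x)$ together with $\chi_{\{u>0\}}$. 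Since $u$ is locally Lipschitz (Proposition \ref{prop1}) and, by the compactness/regularity results quoted in the Appendix (Proposition \ref{prop-compactness} and the gradient estimate \eqref{grad-est}), $\nabla u_{\e_j}\to\nabla u$ in $L^2_{loc}$, the first three terms of the integrand depend continuously on $x$; for the term $\int_{\S^{N-1}}2M\chi_{\{u>0\}}$ one uses that the symmetric difference $\{u>0\}\triangle(\{u>0\}+(x-x_k))$ has measure tending to zero, so after averaging in $r$ this contribution is continuous too. Thus, possibly after replacing $S(x,r,u)$ by its average $\frac1{2\delta}\int_{r-\delta}^{r+\delta}S(x,\rho,u)\,d\rho$ over a small radial window (which is still monotone in $r$ and still converges to $S(x,0,u)$ as $r\to0$), we obtain a genuinely continuous-in-$x$ monotone family.

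The argument then runs as follows. Fix $\eta>0$. By definition of $S(x_0,0,u)$ as a decreasing limit, choose $r_0>0$ with $B_{r_0}(x_0)\Subset B_1$ and
\[
S(x_0,r_0,u)\le S(x_0,0,u)+\eta.
\]
By continuity of $x\mapsto S(x,r_0,u)$ at $x=x_0$, there is $k_0$ such that for all $k\ge k_0$ (so that also $B_{r_0}(x_k)\Subset B_1$)
\[
S(x_k,r_0,u)\le S(x_0,r_0,u)+\eta\le S(x_0,0,u)+2\eta.
\]
Now apply monotonicity in the radius at the center $x_k$: since $r\mapsto S(x_k,r,u)$ is nondecreasing and $S(x_k,0,u)=\lim_{r\to0^+}S(x_k,r,u)\le S(x_k,r_0,u)$, we get
\[
S(x_k,0,u)\le S(x_k,r_0,u)\le S(x_0,0,u)+2\eta
\]
for all $k\ge k_0$. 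Hence $\limsup_{x_k\to x_0}S(x_k,0,u)\le S(x_0,0,u)+2\eta$, and letting $\eta\to0$ finishes the proof.

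I expect the main obstacle to be the continuity of $x\mapsto S(x,r,u)$ for fixed $r$, and in particular the handling of the characteristic-function term $\int 2M\chi_{\{u>0\}}$ on the moving sphere $\partial B_r(x)$: pointwise on a fixed sphere $\chi_{\{u>0\}}$ need not be continuous under translation of the center, so one cannot argue on a single radius and must instead average over a radial window $[r-\delta,r+\delta]$ (equivalently, work with the solid-ball version of the density), where Lebesgue-measure continuity of $\{u>0\}$ under translations applies. The remaining terms are routine given the Lipschitz bound on $u$ and the $L^2_{loc}$ convergence of the gradients from the Appendix; one must only be slightly careful that the monotonicity and the limit $r\to0$ survive this averaging, which they do since an average of nondecreasing functions is nondecreasing and the averaged quantity still converges to $S(x,0,u)$ as the window shrinks to $0$.
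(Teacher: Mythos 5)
Your argument is essentially identical to the paper's: both proofs combine the monotonicity of $r\mapsto S(x,r,u)$ from Corollary \ref{cor-mon} with continuity (in fact only upper semicontinuity is needed) of $x\mapsto S(x,r,u)$ at fixed $r>0$, passing through the chain $S(x_k,0,u)\le S(x_k,r_0,u)\le S(x_0,r_0,u)+\eta\le S(x_0,0,u)+2\eta$. The paper's version simply asserts the step "choose $k$ so large that $S(x_k,\rho,u)<\delta+S(x_0,\rho,u)$" without comment, whereas you correctly flag and address the one nontrivial point, continuity of $S(\cdot,r,u)$ including the $\chi_{\{u>0\}}$ term via radial averaging — a useful clarification but not a different route.
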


\begin{proof}
For given $\delta>0$ there is $\rho_0>0$ such that $S(x_0, \rho, u)\leq S(x_0, 0, u)+\delta$ whenever
$\rho<\rho_0$. Fix such    $\rho$ and choose $k$ so large that $S(x_k,\rho, u)<\delta+S(x_0, \rho, u)$.
From the monotonicity of $S(x_k, \rho, u)$ it follows that 
\begin{eqnarray*}
S(x_k,0,u )&\leq& S(x_k, \rho, u)\le \delta+S(x_0, \rho, u)\\
&\le& 2\delta +S(x_0, 0, u).
\end{eqnarray*}
First letting $x_k \to x_0$ and then $\delta \to 0$ the result follows.
\end{proof}

\begin{lem}\label{lem-s-small}
Let $S$ be the monotone quantity in \eqref{S}. Then the following holds:
\begin{itemize}
\item[(i)]
$s(x_0, R, u)=\frac1{R^N}\int_0^R r^{N-1}S(x_0,r, u)dr$ is monotone non-decreasing and 
$$\frac{d}{dR}s(x_0, u, R)=\frac1{R^{N+1}}\int_0^Rr^NS'(x_0,u, r)dr\ge 0.$$
\item[(ii)] If the solution $u\ge 0$ is degenerate at $x_0\in \fb u$ then 
the set $\{u>0\}$ has well defined Lebesgue density 
$\Theta(x_0, \po u)$ equal to 
$\frac1{2M|B_1|}s(x_0, 0, u)=\frac1{2M|B_1|}\lim_{R\to 0} s(x_0, R, u)$.
\item[(iii)] 
Suppose $x_k\in \fb u$ such that $x_k\to x_0$ then
\[\limsup_{x_k\to x_0}s(x_k, 0, u)\le s(x_0, 0, u).\]
\end{itemize}
\end{lem}
\begin{proof}
It is easy to compute 
\begin{eqnarray*}
s'(x_0, R, u)
&=&
-\frac N{R^{N+1}}\int_0^R r^{N-1}S(x_0,r, u)dr+\frac{S(x_0, R, u)}{R}\\
&=&
-\frac{S(x_0, R, u)}{R}+\frac1{R^{N+1}}\int_0^Rr^NS'(x_0, u, r)dr+\frac{S(x_0, R, u)}{R}\\
&=&
\frac1{R^{N+1}}\int_0^Rr^NS'(x_0, r, u)dr.
\end{eqnarray*}

To prove the second claim notice that at degenerate point $x_0$ we have $u(x)=o(|x-x_0|)$ by virtue of the subharmonicity of $u$. Consequently $\fint_{B_R(x_0)}|\na u|^2=o(1)$ as $r\to 0$  by virtue of the Caccioppoli inequality.
Therefore the only surviving term in $S$ comes from $2M\I u$. 
The proof of the last  claim is analogous to that of Lemma \ref{lem-semi}.\end{proof}

\begin{lem}\label{lem-vigenik}
Let $0\in \fb u$ and assume that $u_0=rg(\sigma), \sigma\in \S^{N-1}$ is a blow-up limit of $u$ 
at $0$ which is homogeneous function of degree one. Then 
\[
\int_{\S^{N-1}}|\na_\sigma g|^2-(N-1)\int_{\S^{N-1}}g^2\quad 
\begin{array}{rrr}
=0 &\quad  \text{if $\fb u$ is flat at $0$},\\
\le 0&\quad \text{otherwise}.
\end{array}
\]
\end{lem}

\begin{proof}
Let $(r, \sigma)$ be the spherical coordinates then the Laplacian takes the form 
$\lap u_\e=\p^2_{rr}u_\e+\frac{N-1}r\p_r u_\e+\frac1{r^2}\lap_{\S^{N-1}} u_\e$. Multiply both sides of $\Delta u_\e$ by $r^{N-1}u_\e$ and integrate  over 
$[0, R]\times \S^{N-1}$ to get 

\begin{eqnarray*}
I_1(u_{\e_j})
&:=&
\int_0^R\int_{\S^{N-1}} u_\e \p^2_{rr}u_\e  r^{N-1}d\sigma dr\\
&=&
R^{N-1} \int_{\S^{N-1}} u_\e \p_r u_\e -\int_0^R\int_{\S^{N-1}}\left[(\p^2_{r}u_\e)^2r^{N-1}+(N-1)\p_r u_\e u_\e r^{N-1}\right]d\sigma dr,
\\
I_2(u_{\e_j})
&:=&
\int_0^R\int_{\S^{N-1}} u_\e \p^2_{r}u_\e  r^{N-2}d\sigma dr=R^{N-2} \int_{\S^{N-1}} \frac{(u_\e)^2}2 -(N-2)\int_0^R\int_{\S^{N-1}}\left[\frac{(u_\e)^2}2r^{N-3}\right]d\sigma dr,
\\
I_3(u_{\e_j})
&:=&
\int_0^R\int_{\S^{N-1}} \lap_{\S^{N-1}} u_\e u_\e  r^{N-3}d\sigma dr=-
\int_0^R \int_{\S^{N-1}} 
|\na_\sigma u_\e|^2 
r^{N-3}d\sigma dr.
\end{eqnarray*}
Choosing a converging sequence $u_{\e_j}$ and 
letting $\e_j\to 0$ we get by virtue of Proposition \ref{prop-compactness}
\begin{equation*}
\lim_{\e_j\to 0}\int_{B_R}\beta_{\e_j}u_{\e_j}=\lim_{\e_j\to 0}
\left[I_1(u_{\e_j})+(N-1)I_2(u_{\e_j})+I_3(u_{\e_j})\right]\to 
I_1(u)+(N-1)I_2(u)+I_3(u).
\end{equation*}
Suppose that $u_{R_k}$ is a blow-up sequence at the origin and $u_{R_k}\to u_0=rg(\sigma)$ then 
\[
I_1(u_0)=R^N\int_{\S^{N-1}} g^2-\frac{R^N}{N}\int_{\S^{N-1}}g^2-\frac{N-1}{N}R^N\int_{\S^{N-1}}g^2=0
\]
and 
\[
I_2(u_0)=R^N\int_{\S^{N-1}}\frac{g^2}2-\frac{N-2}{N}R^N\int_{\S^{N-1}}\frac{g^2}2=\frac{R^N}N\int_{\S^{N-1}}{g^2}.
\]
By Proposition 
\ref{prop-1st-blow} and \eqref{eq-lap-scale} 
there is a sequence $\delta_j\to 0$ such that 
$u_{\delta_j}\to u_0$ and $\lim_{\delta_j\to 0}\int_{B_1}\beta_{\delta_j}u_{\delta_j}\le \|\beta\|_\infty |x\in B_1 : 0<u_{\delta_j}<\delta_j\}|\to 0$
provided that $u$ is flat at $0$. Hence we have 
\[
\lim_{\delta_j\to 0}\int_{B_1}\beta_{\delta_j}u_{\delta_j}=\frac{R^N}N\left[(N-1)\int_{\S^{N-1}}{g^2}-\int_{\S^{N-1}}{|\na_\sigma g|^2}\right]
\quad \begin{array}{rrr}
=0 &\quad  \text{if $\fb u$ is flat at $0$},\\
\ge 0&\quad \text{otherwise}.
\end{array}
\]
\end{proof}

%-------------------------------------
%    SECTION
%-------------------------------------
\section{Proof of Theorem A}

The first part of the theorem follows from Proposition \ref{prop1-1}. 
Since $u$ is not degenerate  at the origin then  by Propositions \ref{prop-compactness} and \ref{prop-1st-blow} $u_{\rho_k}(x)\to u_0(x)$ locally
uniformly and by Proposition \ref{prop1-1} $u_0$ is homogeneous of degree one.
Write~$\lap$ in polar coordinates $(r, \theta)$, to obtain that
\begin{eqnarray}\nonumber
 \lap w=\frac 1r \frac\partial{\partial r}
\left(rw_r\right)+
\frac 1{r^2}\frac\partial{\partial \theta }\left(w_\theta\right).
\end{eqnarray}
In particular, writing~$u_0=rg(\theta)$, this yields
a second order ODE for $g$
\begin{equation}\label{plap-polar}
g+\ddot g=0.
\end{equation}
Suppose $g(0)=g(\theta_0)=0, \theta_0\in [0, 2\pi)$, then \eqref{plap-polar} implies that~$g(\theta)=A\sin\theta$ for some~
constant $A$, and  consequently
forcing $\theta_0=\pi$.
Hence, since~$N=2$, we obtain that~$u_0$ must be linear,
in other words the free boundary $\fb{u_0}$ is everywhere flat.
This in turn implies that in two dimensions
the singular set of the free boundary $\fb{u_0}$ is empty.
Consequently, $u_0$ is linear in $\{u_0>0\}$ and $\{u_0<0\}$.
From here the parts (2) and (3) of Theorem A follow from Propositions 5.3 and 5.1 of \cite{CLW-uniform}.

So it remains
to check (1). For the elliptic problem the only difference is that
the limit function $M(x)=\lim_{\delta_j\to 0}\mathcal B_{\delta_j}(u_{\delta_j})$
cannot have nontrivial concentration on the free boundary coming from
$\{x_1<0\}$ as opposed to the parabolic case  studied in \cite{CLW-uniform}.
Observe that $\na \mathcal B(u_{\delta_j}/\delta_j)=\na u_{\delta_j}\beta_{\delta_j}(u_{\delta_j})=0$ in $B_1\setminus \{0<u_{\delta_j}<\delta_j\}$. By Proposition \ref{prop-1st-blow} and \eqref{eq-lap-scale} there is sequence $0<\lambda_j\to 0$ such that
$(u_{\e_j})_{\lambda_j}\to u_0, \e_j/\lambda_j\to 0$ and $M(x)=M\I{x_1}+M_0\chi_{\{x_1<0\}}$. It follows from \eqref{domain-var} that
\begin{equation}\label{domain-var-1}
\int_{\{x_1>0\}}M\p_1\phi+\int_{\{x_1<0\}}M_0\p_1\phi=\int_{\{x_1>0\}}\frac{\alpha^2}2\p_1\phi, \quad \forall \phi\in C_0^\infty(B_1).
\end{equation}
After integration by parts we obtain $M_0\int_{-1}^1\phi(0, x_2)dx_2=\left(M-\frac{\alpha^2}2 \right)\int_{-1}^1\phi(0, x_2)dx_2$. This yields
\[
M_0=M-\frac{\alpha^2}2.
\]

Next we claim that $M_0=0$. Suppose $M_0>0$ then 
the set $I_0:=\{t\in \R : \mathcal B(t)=M_0\}\not =\emptyset$ and there is 
$a\in(0,1)$ such that $I_0\subset [a, 1]. $
Since $\mathcal B(t)$ is continuous and non-decreasing then it follows that there is $0<a_0<a$ such that 
$u_{\delta_j}(x)/\delta_j\in[a_0, 1] $ provided that 
$j$ is sufficiently large.
Let $\mathcal C=\{x : u_{\delta_j}(x)/\delta_j\in[a_0, 1]|\}\cap  \{x_1<0\}\cap B_1$. 
Then,  $\mathcal C\subset \{x\in B_1 : a_0\delta_j\le u_{\delta_j}(x)\le \delta_j\}\subset \{0<u_{\delta_j}<2\delta_j\}\cap B_1$.
But $|\{0<u_{\delta_j}<2\delta_j\}\cap B_1|\to 0$ which implies that 
$M_0$ cannot be positive. 
\qed

%-------------------------------------
%    SECTION     3D problem
%-------------------------------------
\section{The structure  of the free boundary of blow-ups in $\R^3$}
In this section we assume that $u\ge 0$ is a limit of $u_{\e_j}$ solving
\eqref{pde-0} for some sequence $\e_j\to 0$, $u$ is non-degenerate at some $y_0\in \fb{u}$ and $u_0$ is a blow-up of $u$
at $y_0$. Note that by Corollary \ref{cor-mon} $u_0$ is homogenous function of degree one.
If $u_0$ is not a minimizer then it is natural to expect that the solutions of \eqref{pde-0} develop
singularities in $\R^N, N\ge 3.$

We first prove a non-degeneracy result. 
\begin{lem}\label{lem-beta-non-deg}
Let $x_0\in \fb {u_0}$ be a free boundary point such that there is a ball $B\subset \{u_0=0\}$ touching 
$\fb {u_0}$ at $x_0$ and $\Theta(x_0, \po{u_0})\ge \frac12$. Then $u_0$ is non-degenerate at $x_0$ and 
\[u_0(x)=\sqrt{2M}(x-x_0)^++o(x-x_0).\]
\end{lem}

\begin{proof}
Let $(u_0)_r=\frac{u_0(x_0+rx)}r$. There is $r_0$ such that 
\begin{equation}\label{cube-1}
(u_0)_r=0\quad \text{ in} \ \ \{x_1<-\delta\}\cap Q_1,\quad \forall r\le r_0
\end{equation}
%% Tikz general case
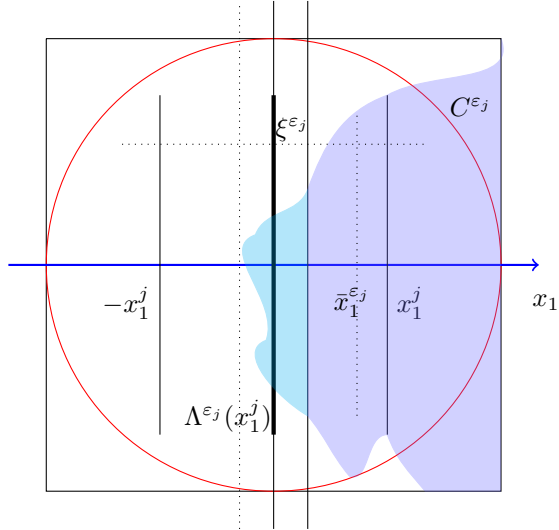
\begin{figure}
\begin{center}
 \begin{tikzpicture}

    % define coordinates
    \coordinate (tl) at (-3,3) ; % topleft
    \coordinate (tr) at (3,3) ; %topright
        \coordinate (bl) at (-3,-3) ; % 
    \coordinate (br) at (3,-3) ; %
            \coordinate (A) at (1.5,2.25) ; %upper vertex
        \coordinate (B) at (1.5,-2.25) ; %lower vertex
\coordinate (C) at (-1.5,2.25);
\coordinate (D) at (-1.5,-2.25);

% cube
\draw (tl)--(tr)--(br)--(bl)--(tl);

% circle
\draw[red] (0,0) circle (3cm);

% x_1 axis
 \draw[->, blue, thick] (-3.5, 0)-- (3.5, 0) ;
 
 % vertical lines
\draw (0, -3.5)--(0, 3.5);
\draw[dotted] (-0.45, -3.5)--(-0.45, 3.5);
\draw (0.45, -3.5)--(0.45, 3.5);

%  Lambda
\draw (A)-- (B);
\draw (C)--(D);
\draw[ultra thick](0, 2.25)--(0, -2.25);% lambda
 \node[right] at (1.5,-0.5) {$x_1^j$};
  \node[left] at (-1.5,-0.5) {$-x_1^j$};
   \node[left] at (0.1, -2) {$\Lambda^{\e_j}(x_1^j)$};

% intersection body vertical =0.4
\filldraw[blue!60!,opacity=.3] (tr) to[out=-70,in=30] (A) 
to[out=200,in=70] (0.45,1)--(0.45,-2)
to[out=-50,in=130] (1, -2.8)to[out=-20,in=180](B) to[out=-70,in=130] (2, -3)
--(3, -3);

\filldraw[cyan,opacity=.3] 
(0.45,1) to[out=250,in=70] (-0.24,0.4)
to[out=200,in=70] (-0.1, -1.0)
to[out=200,in=150] (0.45, -2);

% points
\draw[dotted] (-2, 1.6)-- (2, 1.6); % xi line
\draw[dotted] (1.1, -2)-- (1.1, 2); % x_1 line

\node[right] at (2.2, 2.1) {$C^{\e_j}$}; % C^\e_j

\node[right] at (-0.1, 1.8) {$\xi^{\e_j}$};
\node[left] at (1.4, -0.5) {$\bar x_1^{\e_j}$}; 
\node[left] at (3.9, -0.5) {$x_1$}; 

\end{tikzpicture}
\end{center}
\caption{The construction of the point $(\bar x_1^{\e_j}, \xi^{\e_j})$. 
The purple region is $C^{\e_j}$. }
\label{fig-1}
\end{figure}
for some small $\delta>0$, where $Q_1=(-1, 1)^3$
is the unit cube. 
Moreover, there is $\widehat r_0>0$ such that 
\begin{equation}\label{cube-2}
\frac{|\{(u_0)_r>0\}\cap \{x_1>0\}\cap B_1|}{|B_1|}>\frac12-\delta, \quad \forall r\le \widehat r_0.
\end{equation}
Fix $r$ with these two  properties \eqref{cube-1} and \eqref{cube-2}. 
There exists $\gamma>0$ such that 
\begin{equation}
\frac{|\{(u_0)_r>\gamma\}\cap \{x_1>0\}\cap B_1|}{|B_1|}>\frac12-2\delta
\end{equation}
Denote $v^{\e_j}=(u_{\e_j})_r$ where $u_{\e_j}\to u_0$ (see Proposition \ref{prop-1st-blow}) and 
$A^{\e_j}=\{v^{\e_j}>\frac\gamma2\}\cap \{x_1>0\}\cap B_1$.
Since $v^{\e_j}\to (u_0)_r$ uniformly (see Proposition \ref{prop-compactness}) it follows that there is $j_0(r)$ such that  for 
$j\ge j_0(r)$ we have 
\begin{equation}
|A^{\e_j}|>|B_1|(\frac{1}2-2\delta).
\end{equation}
 Let 
$B^{\varepsilon_j}=\left\{ x_{1}\in \left( -1,-\delta \right) \right\} \cap Q_{1}$
and 
$-B^{\varepsilon_j}=\left\{ x_{1}\in \left( \delta ,1\right) \right \} \cap Q_{1}$.
Let $C^{\varepsilon_j}=A^{\varepsilon_j}\cap \left( -B^{\varepsilon_j}\right)$
then we have that  
\[
\left| C^{\varepsilon_j}\right| \geq |B_1|(\dfrac {1}{2}-2\delta)  > 0.
\]

Denote $\Lambda ^{\varepsilon _{j}}\left( x_{1}\right) =\{ x' : \left( x_{1},x' \right) \in C^{\varepsilon_j}\}$
and 
$f^{\varepsilon_j}\left( x_{1}\right) =\left| \Lambda ^{\varepsilon_j}\left( x_{1}\right) \right|.$
We claim that $|\{x_1 : f^{\e_j}(x_1)>|B_1|(\frac12-3\delta)\}|>0$. Indeed, if the claim fails then we have 
\[
|B_1|(\frac12-2\delta)\le |C^{\e_j}|=\int_\delta^1f^{\e_j}(x_1)dx_1\le |B_1|(\frac12-3\delta)
\]
which is a contradiction. 

Hence there is $x^{\e_j}_1\in(\delta, 1)$ such that $f^{\e_j}(x_1^{\e_j})>|B_1|(\frac12-3\delta)$.
Now choose $0<a'<a<b<b'<1$ such that 
\[\beta(s)>\kappa, \forall s\in [a', b'].\]
%%%%%%%%%%%%%%%%%%%%%%

%%%%%%%%%%%%%%%%%%%%%%
Let $\e_j'=\frac{\e_j}r$. We claim that there is $\xi^{\e_j}\in \Lambda^{\e_j}$ and $\bar x_1^{\e_j}$ such that 
\[
\frac{v^{\e_j}}{\e'_j}(x_1^{\e_j}, \xi^{\e_j})\in (a, b).
\]
Indeed, for sufficiently large $j$ we have 
\[
a > \dfrac {v^{\e_j}} {\varepsilon '_j} ( -x^{\e_j}_{1},x' ) =0, \quad \forall x'\in 
\Lambda ^{\varepsilon_ j}( x^{\e_j}_{1}) 
\]
and 

\[
\dfrac {v^{\e_j}} {\varepsilon '_j} ( x^{\e_j}_{1},x' ) >\frac{\gamma}{2\e_j'}>b, \quad \forall x'\in 
\Lambda ^{\varepsilon_ j}( x^{\e_j}_{1}) 
\]
provided that $j>j_1(r)$, see Figure \ref{fig-1}.
Hence form the mean value theorem we see that the claim is true. 
From the uniform Lipschitz continuity of the functions $v^{\e_j}$ it follows that 
there is a constant $c_0>0$ such that 
\[
\dfrac {v^{\e_j}} {\varepsilon '_j} ( x_{1},x' ) \in(a', b'), \quad \text{if}\  |x_1-\bar x_1^{\e_j}|<\e_j'c_0, x'\in 
\Lambda ^{\varepsilon_ j}( x^{\e_j}_{1}). 
\] 
Consequently we have that 
\[
\int_{B_1}\beta_{\e_j'}(v^{\e_j})\ge \frac{\kappa}{\e_j'}\int_{|x_1-\bar x_1^{\e_j}|<\e_j'c_0}|\Lambda^{\e_j}(x_1)|dx_1\ge \frac{\kappa}{\e_j'}(1-3\delta)2c_0\e_j'=2\kappa|B_1|(1-3\delta)c_0:=\widetilde C.
\]
Now the non-degeneracy follows from the proof of Part II of Theorem 6.3 \cite{CLW-uniform}.
The asymptotic expansion follows from Theorem A and Proposition \ref{prop1-1}.
\end{proof}

\begin{figure}\label{fig-2}
\begin{center}
\begin{tikzpicture}
\filldraw[fill=cyan, opacity=.2, ] 
    (-2.5,2.5)-- (0,0) -- (-2.5,-2.5) -- (-2.5, 2.5); 
    \filldraw[fill=cyan, opacity=.2] 
    (2.5,2.5)-- (0,0) -- (2.5,-2.5) -- (2.5, 2.5); 
    \node[right] at (1,0){$u>0$};
        \node[left] at (-1, 0){$u>0$};
\draw[ultra thick](-2.5, 2.5)--(2.5,-2.5);
\draw[ultra thick](-2.5, -2.5)--(2.5,2.5);
\node[below] at (0,-0.2) {$O$};
\end{tikzpicture}
\end{center}
\caption{Possible self-crossing free boundary which fails to satisfy the conditions of Lemma \ref{lem-beta-non-deg}.}\label{fig-2}
\end{figure}
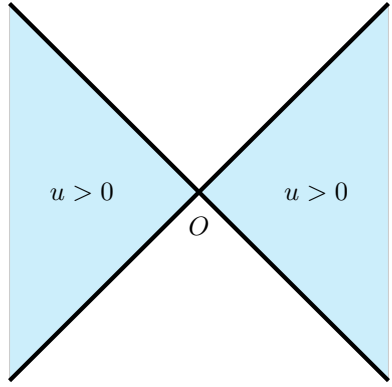
\begin{remark}
Note that under weaker assumption $\Theta(x_0, \{u_0>0\})>0$ the argument in the proof of Lemma \ref{lem-beta-non-deg} still works.
However for the self-crossing free boundary \cite{Weiss} (see Figure \ref{fig-2})
the assumptions of  Lemma  \ref{lem-beta-non-deg}. 
\end{remark}

As an immediate corollary we have 
\begin{cor}
Let $x_0\in \fb {u_0}$ be a point of reduced boundary. Then $u_0$ is non-degenerate at $x_0.$ 
\end{cor}
\begin{proof}
Suppose that $0\in \fb{u_0}$ and $\fb{(u_0)_r}\subset B_2\cap \{|x\cdot e|<\e\}$
or some unit vector $e$ and small $\e>0$. Here $(u_0)_r=\frac{u_0(rx)}r$. 
Consider the family of balls $B_{1/2}(et), t\in[-\e, \e]$. Then there is 
$t_\e\in [-\e, \e]$ such that $B_{1/2}(et_\e)$ touches the free boundary at some 
point $z_0\in B_1$ provided that $\e$ is sufficiently small.
Let $\nu_0=t_\e e$. 
Introduce the following barrier function 
\[
w(x)=\dfrac {\varphi \left( 1/2\right) -\varphi \left( \left| x-\nu_0\right| \right) }
{\varphi \left( 1/2\right) -\varphi \left( 1\right) }
\sup _{B_{1}(\nu_0)}u_{0}
\]
where $\varphi(|x|)=\frac1{|x|^{N-2}}$. We have 
$\lap(u_0-w)=\lap u_0\ge 0$ in $D=B_1(\nu_0)\setminus B_{\frac12}(\nu_0)$
and $u_0-w\le 0$ on $\partial D$. From the maximum principle
we infer that 
$u_0\le w$ in $D$. But we have that the maximum of $u_0-w$ is realized at $z_0$. Hence 
from the Hopf lemma we get 
$$-\sqrt{2M}=\partial_{\nu_0} u_0(z_0)>\partial_{\nu_0} w(z_0)=-|\varphi'(1/2)|
\frac{\sup _{B_{1}(\nu_0)}u_{0}}{\varphi \left( 1/2\right) -\varphi \left( 1\right) }
$$
or 
\[
\sup _{B_{1}(\nu_0)}(u_{0})_r\ge \sqrt{2M}
\frac{\varphi(1/2)-\varphi(1)}{|\varphi'(1/2)|}.
\]
\end{proof}

% viscosity solutions
In the following definition we denote 
$\Omega^+(u)=\{u>0\}$ and~$\Omega^-(u)=\{u<0\}$.
Moreover, let 
\begin{equation}\label{FB-cond-G}
G(u^+_\nu,u^-_\nu):=(u_\nu^+)^2-(u_\nu^-)^2-2M
\end{equation} 
where~$u^+_\nu$ and~$u^-_\nu$ are the normal derivatives in the inward direction $\nu$
to~$\partial \Omega^+(u)$ and~$\partial \Omega^-(u)$, respectively. 
For more details see Definition 2.4 \cite{CS-book}.
\begin{defn}\label{def:visc}
Let~$\Omega$ be a bounded domain of~$\R^N$ and let~$u$ 
be a continuous function in~$\Omega$. We say that~$u$ is a viscosity solution
in~$\Omega$ if
\begin{itemize}
\item[i)] $\Delta u=0$ in~$\Omega^+(u)$ and~$\Omega^-(u)$,
\item[ii)] along the free boundary~$\fb u$, $u$ satisfies the free boundary condition, in the sense that:
\begin{itemize}
\item[a)] if at~$x_0\in\fb u$ there exists a ball~$B\subset\Omega^+(u)$
such that~$x_0\in \partial B$ and
\begin{equation}\label{visc1}
u^+(x)\ge\alpha\langle x-x_0,\nu\rangle^+ + o(|x-x_0|), \ {\mbox{ for }} x\in B,
\end{equation}
\begin{equation}\label{visc2}
u^-(x)\le\beta\langle x-x_0,\nu\rangle^- + o(|x-x_0|), \ {\mbox{ for }} x\in B^c,
\end{equation}
for some $\alpha>0$ and~$\beta\ge0$, with equality along every non-tangential domain,
then the free boundary condition is satisfied
$$ G(\alpha,\beta)\le 0, $$
\item[b)] if at~$x_0\in\fb u$ there exists a ball~$B\subset\Omega^-(u)$
such that~$x_0\in \partial B$ and
$$ u^-(x)\ge\beta\langle x-x_0,\nu\rangle^- + o(|x-x_0|), \ {\mbox{ for }} x\in B, $$
$$ u^+(x)\le\alpha\langle x-x_0,\nu\rangle^+ + o(|x-x_0|), \ {\mbox{ for }} x\in\partial B, $$
for some $\alpha\ge0$ and~$\beta>0$, with equality along every non-tangential domain,
then
$$ G(\alpha,\beta)\ge0. $$
\end{itemize}
\end{itemize}
\end{defn}

In our case $\beta=0$ and we have only $u^+$. However
one has to check that the free boundary conditions  $a)$ and $b)$ in Definition \ref{def:visc}
are satisfied.

\begin{lem}\label{lem-u-0-visc}
Let $u_0$ be a blow-up of $u$ at some non-degenerate point such that 
$\Theta (x, \po{u_0})\ge \frac12$ for every $x\in \fb{u_0}$. Then $u_0$
is a viscosity solution in the sense of Definition \ref{def:visc}.
\end{lem}

\begin{proof}
We have to show that the properties $a), b)$ in Definition \ref{def:visc}
 hold. Suppose that $B\subset \{u_0>0\}$ touches $\fb{u_0}$ at some point $x_0$. Then it follows from Hopf's lemma 
that $u_0$ is non-degenerate at $x_0$. Consequently, if $u_{00}$
is a blow-up at $x_0$ then by Theorem A $u_{00}(x)=\alpha x_1^+$
after some rotation of coordinate system. Moreover $0<\alpha\le \sqrt{2M}$. Hence $G(\alpha, 0)\le 0$. 

Now suppose that $B\subset \{u_0=0\}$ and $B$ touches  
$\fb {u_0}$ at $z_0$. By Lemma \ref{lem-beta-non-deg} $u_0$
is non-degenerate at $z_0$. Theorem A implies that any blow-up $u_{00}$ of 
$u_0$ at $z_0$ must  be $u_{00}(x)= \sqrt{2M}x_1^+$ after some rotation of coordinates. Hence $G(\sqrt{2M}, 0)\ge 0$.  
\end{proof}

\subsection{Properties of $\fb{u_0}$}
We want to study the properties of $g$. We first prove a Bernstein-type result which is a simple consequence of a
refinement of Alt-Caffarelli-Friedman monotonicity formula \cite{ACF}, \cite{CKS}.

\begin{lem}\label{lem-cap}
Let $u\ge 0$ be a limit of solutions to  \eqref{pde-0}. 
Let $u_0=rg(\sigma), \sigma\in \S^{N-1}$ be a
nontrivial blow-up of $u$
at some free boundary point. If there is a hemisphere containing $\supp g$ then the graph of 
$u_0$ is a half-plane.
\end{lem}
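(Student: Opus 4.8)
The strategy is to apply the Alt–Caffarelli–Friedman (ACF) monotonicity formula — or rather its refinement from \cite{CKS} which characterizes the equality/rigidity case — to the pair $u_0^+$ and $u_0^-$, after first observing that the hypothesis ``$\supp g$ is contained in a hemisphere'' forces the negative part $u_0^-$ to be relatively large in terms of solid angle. Since $u_0 = rg(\sigma)$ is homogeneous of degree one and harmonic in $\po{u_0}$ (by Corollary \ref{cor-mon} and $\lap u_0 = 0$ away from the free boundary), the spherical part $g$ satisfies $\lap_{\S^{N-1}}g + (N-1)g = 0$ on $\po g = \{g>0\}$. The ACF functional
\[
\Phi(r) = \frac{1}{r^4}\int_{B_r}\frac{|\na u_0^+|^2}{|x|^{N-2}}\,dx \cdot \int_{B_r}\frac{|\na u_0^-|^2}{|x|^{N-2}}\,dx
\]
is then constant in $r$ because of the degree-one homogeneity, so we are exactly in the equality case of the monotonicity formula. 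The refinement in \cite{CKS} tells us that in the equality case each of $u_0^\pm$ must be a homogeneous degree-one function whose spherical part is the first Dirichlet eigenfunction of the respective nodal domain on $\S^{N-1}$, and moreover the two characteristic eigenvalues must be ``conjugate'' in the sense that they sum appropriately (for degree one, each nodal domain carries eigenvalue $\lambda = N-1$, which by Sperner/Faber–Krahn-type symmetrization forces each nodal domain to be exactly a hemisphere).

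Concretely, the key steps are: (i) record that $u_0$ is homogeneous of degree one and harmonic off the free boundary, hence $\Phi(r) \equiv \text{const}$; (ii) invoke the rigidity statement of the refined ACF formula of \cite{CKS} to conclude that $\po{u_0}$ and $\{u_0<0\}$ are each spherical caps supporting a first eigenfunction with eigenvalue $N-1$; (iii) use the characterization of eigenvalue $N-1$ on geodesically convex caps (the eigenvalue of a spherical cap strictly exceeds $N-1$ unless the cap is a hemisphere) to conclude both nodal domains are hemispheres, so $g(\sigma) = \sqrt{2M}\,\langle\sigma, e\rangle^+$ up to rotation; (iv) feed this through the free boundary condition $|\na u_0^+|^2 - |\na u_0^-|^2 = 2M$ — here since $u_0^- \not\equiv 0$ would contradict the free boundary relation for a single hemisphere unless the slope matches, one checks the two-plane solution degenerates and the constant is pinned to $\sqrt{2M}$, giving the half-plane $\sqrt{2M}x_1^+$.

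The main obstacle I expect is step (iii): ruling out nodal domains that are proper spherical caps rather than full hemispheres. If $\supp g$ sits inside a hemisphere, the complementary nodal region could a priori be a cap strictly larger than a hemisphere, and one must show that such a configuration cannot support a degree-one (eigenvalue $N-1$) eigenfunction — equivalently, that the first Dirichlet eigenvalue of a spherical cap is $> N-1$ strictly whenever the cap is a proper subset of a hemisphere. This is where the constancy of $\Phi$ is essential: it forces \emph{both} factors to realize eigenvalue exactly $N-1$ simultaneously, and the monotonicity of the cap eigenvalue in the cap radius then squeezes both caps to be hemispheres. A secondary subtlety is the degenerate case where $u_0^- \equiv 0$: then the ACF argument is vacuous and one instead argues directly that a nonnegative degree-one harmonic function on a cap inside a hemisphere, vanishing on the boundary, must (again by the eigenvalue characterization applied to the single nodal domain, using $\lap_{\S^{N-1}}g+(N-1)g=0$) have $\po g$ equal to a hemisphere, yielding $g = c\,\langle\sigma,e\rangle^+$ and then $c = \sqrt{2M}$ from the free boundary condition.
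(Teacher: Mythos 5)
Your primary route does not apply here: the lemma's hypothesis is $u\ge 0$, so the blow-up satisfies $u_0\ge 0$ and hence $u_0^-\equiv 0$. The ACF functional built from $u_0^+$ and $u_0^-$ is then identically zero, and the rigidity/equality analysis you describe in steps (i)--(iv) never gets off the ground. What you call a ``secondary subtlety'' at the end is in fact the \emph{only} case the lemma concerns, and it deserves to be the whole proof rather than a footnote.

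That said, the fallback you sketch for the nonnegative case is essentially correct and is a genuinely different argument from the paper's. You note that $g>0$ on $\po g$ solves $\lap_{\S^{N-1}}g+(N-1)g=0$ and vanishes on $\p\po g$, so $\lambda_1(\po g)=N-1$; since $\po g$ sits inside a hemisphere, domain monotonicity of the first Dirichlet eigenvalue forces $\po g$ to exhaust the hemisphere (up to a polar set), hence $g=c\,\langle\sigma,e\rangle^+$, and Proposition~\ref{prop-tech} pins $c=\sqrt{2M}$. (You do not need Faber--Krahn or the reduction to caps; domain monotonicity alone suffices, and indeed $\po g$ is not a priori a cap.) The paper instead reflects $u_0$ across the bounding hyperplane to obtain a disjointly supported nonnegative subharmonic partner $v$, applies the refined ACF inequality of \cite{CKS} to the pair $(u_0,v)$, and observes that if $\supp g$ lies strictly inside the hemisphere then $\supp u_0\cup\supp v$ leaves a fixed positive spherical gap, so the differential inequality $\Phi'\ge 2\Phi A_r/r$ forces unbounded growth of $\Phi$, contradicting the Lipschitz bound on $u_0$. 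Both routes reduce to the same spectral fact (a proper subdomain of a hemisphere has characteristic exponent strictly larger than one); yours invokes it directly, while the paper packages it through the ACF machinery used elsewhere.
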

\begin{proof}
Without loss of generality we assume that $\supp g\subset \S^{N-1}_+=\{X\in \S^{N-1}, x_N\ge 0\}$.
Let $v(x_1, \dots, x_N)=u(x_1, \dots, -x_N)$ be the reflection of $u$ with respect to the
hyperplane $x_N=0$. Then $v$ is nonnegative subharmonic function
satisfying the requirements of Lemma 2.3 \cite{CKS}. Thus
\[\Phi(r)=\frac1{r^4}\int_{B_r}\frac{|\na u_0|^2}{|x|^{N-2}}\int_{B_r}\frac{|\na v|^2}{|x|^{N-2}}\]
is nondecreasing in $r$. Moreover
\[\Phi'(r)\ge\frac{2\Phi(r)}r A_r, \quad A_r=\frac{C_N}{r^{N-1}}Area(\partial B_r\setminus (\supp u_0\cup\supp v)).\]
Thus,  if $\supp g$ digresses from the hemisphere by size $\delta>0$ then 
$A_r\ge c(\delta)>0$. Hence integrating the
differential inequality for $\Phi$ we see that
$\Phi$ grows exponentially which is a contradiction,  since in view of
Proposition \ref{prop-tech} $u_0$ is Lipschitz and hence $\Phi$ must be bounded.
\end{proof}

%%
%% Free boundary subsets
It is convenient to define the following subsets of the free boundary
\begin{eqnarray}
\Gamma_{\frac12}&=&\left\{x\in \fb{u_0}\  s.t.\  \Theta(x, \po{u_0})=\frac12\right\},\\
\Gamma_1&=&\{x\in \fb{u_0}\  s.t.\  \Theta(x, \po{u_0})=1\}.
\end{eqnarray}
$\Theta(x, D)$ Denotes the Lebesgue density of $D$ at $x$.
We will see that $\Theta(x, \po{u_0})$ exists at every non-degenerate point and
equals either 1 or $\frac12$.

\begin{lem}\label{lem-asym}
Assume $N=3$. Let $x_0\in \fb{u_0}\setminus\{0\}$ be a non-degenerate free boundary point such that the lower Lebesgue
density $\Theta_*(x_0, \{u_0\equiv0\})>0$.  Then there is a unit vector $\nu_0$ such that
\begin{equation}\label{asym-exp}
u_0(x)=\sqrt{2M}[(x-x_0)\cdot \nu_0]^++o(x-x_0).
\end{equation}
In particular, $x_0\in \Gamma_{\frac12}$
\end{lem}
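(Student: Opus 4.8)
The plan is to perform a secondary blow-up of $u_0$ at the point $x_0 \ne 0$ and argue that it must be a half-plane solution with the correct gradient jump. Since $x_0$ is non-degenerate for $u_0$, Proposition \ref{prop1-1} (applied to $u_0$, which is itself a limit of scaled solutions to \eqref{pde-0} by Proposition \ref{prop-1st-blow}) guarantees that any blow-up $w$ of $u_0$ at $x_0$ is homogeneous of degree one. So first I would extract such a $w = r g_0(\theta)$ in $\R^3$. The key structural input is the hypothesis $\Theta_*(x_0, \{u_0 \equiv 0\}) > 0$: the zero set has positive lower density at $x_0$, and this density is inherited by the blow-up $w$, so $\{w \equiv 0\}$ has nonempty interior. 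Since $w$ is harmonic in $\{w > 0\}$ and $\{w = 0\}$ has a nontrivial interior, the cone $\supp w$ cannot be all of $\R^3$; in particular $\supp g_0 \subsetneq \S^2$.

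Next I would show $\supp g_0$ is contained in a hemisphere. Here is where I expect the main work to lie. One route: since $x_0$ is a blow-up point at positive distance from the origin, the homogeneity of $u_0$ in $r$ (around $0$) forces extra flatness — near $x_0$ the function $u_0$ depends, to leading order, only on the two directions transverse to the ray $\R_+ x_0 / |x_0|$ (the homogeneous function is constant along that ray's "scaling direction" in the relevant sense), so $w$ is effectively a $2$-dimensional cone, i.e. a homogeneous degree-one harmonic function of two variables. A two-dimensional homogeneous degree-one cone with a nontrivial zero sector is, by the ODE $g_0 + \ddot g_0 = 0$ as in \eqref{plap-polar}, exactly of the form $a\,\theta^+$-type supported on a half-line, hence $\supp g_0$ lies in a hemisphere. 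Alternatively, and perhaps more robustly, invoke Lemma \ref{lem-cap}: $w$ is a nontrivial blow-up (of $u_0$, itself a limit of solutions to \eqref{pde-0}) at a free boundary point, so if I can show $\supp g_0$ lies in \emph{some} hemisphere then Lemma \ref{lem-cap} immediately gives that $w$ is a half-plane. To get the hemisphere containment I would run the Alt–Caffarelli–Friedman / \cite{CKS} argument of Lemma \ref{lem-cap} directly: reflect across a supporting hyperplane of the convex hull of the zero-density cone and use that $\Phi(r)$ must stay bounded (by Lipschitz regularity, Proposition \ref{prop-tech}) to force the zero set to occupy at least a half-space, hence $\supp g_0$ a hemisphere.

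Once $w = c\, (x \cdot \nu_0)^+$ is a half-plane solution, the constant $c$ is pinned down to $\sqrt{2M}$ by the free boundary condition exactly as in the proof of part (1) of Theorem A: the limit measure $M(x) = \lim_j \mathcal B_{\delta_j}(u_{\delta_j})$ cannot concentrate on the free boundary from the zero side (the set $\{\gamma' < u_{\delta_j}/\delta_j < 1 - \gamma'\}$ has vanishing measure since $u_{\delta_j} \to w$), so the domain-variation identity \eqref{domain-var} forces $c^2 = 2M$. This gives $w(x) = \sqrt{2M}\,(x \cdot \nu_0)^+$. Finally I would upgrade the blow-up statement to the asymptotic expansion \eqref{asym-exp}: the blow-up of $u_0$ at $x_0$ is \emph{unique} (independent of the scaling sequence), because half-plane solutions are isolated among homogeneous degree-one global solutions and the monotone quantity $S(x_0, r, u_0)$ from Corollary \ref{cor-mon} is constant on the limit, pinning the direction $\nu_0$; uniqueness of the blow-up is equivalent to the statement $u_0(x) = \sqrt{2M}[(x - x_0)\cdot \nu_0]^+ + o(|x - x_0|)$. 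Since $\Theta(x_0, \po{u_0}) = \Theta(0, \{\nu_0 \cdot y > 0\}) = \frac12$, we conclude $x_0 \in \Gamma_{1/2}$.

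The step I expect to be the genuine obstacle is establishing that $\supp g_0$ sits in a hemisphere — equivalently, that the zero set of the secondary blow-up $w$ has at least half-space density. Positivity of the lower density $\Theta_*$ only gives a cone of positive (but possibly small) solid angle of zeros, and promoting "positive angle" to "at least a hemisphere" is precisely the content of the \cite{CKS}-refined ACF argument; one must be careful that all hypotheses of Lemma 2.3 of \cite{CKS} (subharmonicity of the reflected competitor, the correct dimensional normalization in $N = 3$) are in force for $w$ rather than for the original $u$.
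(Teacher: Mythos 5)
Your route~(a) (dimension reduction to a two-dimensional cone, then classification via Theorem~A's 2D ODE) is exactly the paper's proof, and your instinct that this is "perhaps less robust" is misplaced: it is in fact the clean and complete argument. The paper makes the ``effectively 2-dimensional'' step rigorous via Euler's identity and zero-degree homogeneity of $\nabla u_0$: writing $v_k(x)=u_0(x_0+\rho_k x)/\rho_k$ one has $\rho_k v_k(x)=\nabla v_k(x)\cdot(x_0+\rho_k x)$, and passing to the limit (using the Lipschitz bound to control $v_k$) gives $\nabla v\cdot x_0=0$, so the secondary blow-up $v$ is translation-invariant along $x_0$ and hence a 2D homogeneous degree-one cone. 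Theorem~A then classifies $v$ as a half-plane, wedge, or two-plane; the sign $u_0\ge 0$ kills the two-plane and the hypothesis $\Theta_*(x_0,\{u_0\equiv0\})>0$ kills the wedge (whose zero set has zero measure), forcing the half-plane $\sqrt{2M}(x\cdot\nu_0)^+$ with the constant already pinned by Theorem~A(1). In particular, there is no need to prove hemisphere containment from positive lower density via the \cite{CKS} reflection argument (your route~(b)); that machinery is a heavier detour the paper does not use here, and Lemma~\ref{lem-cap} is instead reserved for Theorem~C. One small correction: to guarantee that the secondary blow-up $v$ is again a limit of scaled solutions of \eqref{pde-0} (so that Corollary~\ref{cor-mon} and Theorem~A apply), the right citation is Proposition~\ref{prop-2nd-blow}, not Proposition~\ref{prop-1st-blow}.
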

\begin{proof}
Set $v_k=\frac{u_0(x_0+\rho_k x)}{\rho_k}$, since $u_0$ is non-degnerate at $x_0$
it follows from a customary compactness argument that $v_k\to v$
and by virtue of  Corollary \ref{cor-mon} $v$ is homogeneous function of degree one.
We have
\begin{eqnarray}
\frac{u_0(x_0+\rho_kx)}{\rho_k}&=&u_0(\rho^{-1}_kx_0+x)=\na u_0(\rho^{-1}_kx_0+x)(\rho^{-1}_kx_0+x)=\\
&=& \frac1{\rho_k}\na u_0(x_0+\rho_kx)(x_0+\rho_k x)
\end{eqnarray}
where the last line follows from the zero degree homogeneity of the gradient,
hence \[\rho_kv_k(x)=\na u_0(x_0+\rho_kx)(x_0+\rho_k x)=\na v_k(x)(x_0+\rho_k x).\]
By Lipschitz continuity of $u_0$ it follows that $v_k$
is locally bounded.
Consequently, for a suitable subsequence of $\rho_k$ we have that
$v_{k_j}\to v$ and $\na v(x)x_0=0$.
Without loss of generality we may assume that $x_0$ is on the $x_3$ axis, implying that
$v$ depends only on $x_1$ and $x_2$. Applying Proposition \ref{prop-2nd-blow}
and Corollary \ref{cor-mon} we conclude that $S(x_0, r, u_0)$ is non-decreasing
and thus $v$ must be homogeneous of degree one.
Indeed, there is a sequence
$\delta_j\to 0$ such that $(u_{\e_j})_{\lambda_j}\to v, \delta_j=\e_j/\lambda_j$ by Proposition \ref{prop-2nd-blow}.

Finally, applying Theorem A and the
assumption $\Theta_*(x_0, \{u_0=0\})>0$
we see that $v$ must be a half-plane solution.  It remains to note that
the approximate tangent of $\fb{u_0}$ at $x_0$ is unique and this completes the proof.
\end{proof}

%\subsection{Main proof}

\smallskip
% openness of Gamma12
\begin{lem}\label{lem-C1alpha}
We want to show that $\Theta(x, \po{u_0})\ge \frac12$ in some neighbourhood of  $x_0$.
Let $x_0\in \Gamma_{\frac12}$. Then there exists $r_0>0$ such 
that  $B_{r_0}(x_0)\cap\fb {u_0}$ is a $C^{1, \alpha}$ surface.
\end{lem}
\begin{proof}
Let $y_0\in \fb {u_0}$ be a degenerate point. Suppose there is 
$\rho>0$ such that $u_0$ is degenerate at every point of $B_\rho(y_0)\cap \fb{u_0}$. Since $\supp\Delta u_0\subset \fb{u_0}$
then it follows that $u_0\equiv 0$ in $B_\rho(y_0)$.
Consequently, there is a sequence of non-degenerate points $y_k\to y_0$. Note that if $y_k$ is a non-degenerate point 
then by Theorem A the Lebesgue density $\Theta(y_k, \{ u_0>0\})\ge \frac12$.

Let $u_{00}^k$ be a blow-up of $u_0$ at $y_k$.
By Proposition \ref{prop-2nd-blow} for fixed $k$ there are
$\delta_j^k\to 0$ such that $(u_{\e_j^k})_{\lambda_j^k}\to u_{00}^k, \delta_j^k=\e_j^k/\lambda_j^k$.
Thus applying Theorem A it follows that $u_{00}^k$ is a half plane solution or a wedge.

From scaling properties of Spruk's  monotonicity formula and Lemma \ref{lem-s-small} we get
\begin{equation}\label{balag-1}
s(0, y_k, u_0)=s(1, 0, u_{00}^k)=2M\vol(B_1\cap \po{u_{00}^k})=
\left\{
\begin{array}{lll}
2\pi M & \text{if $y_k$ is a wedge point,}\\
\pi M & \text{otherwise}.
\end{array}
\right.
\end{equation}
Then applying Corollary \ref{cor-mon} to $u_{\delta_j^k}$ and
using the semicontinuity of $S$ Lemma \ref{lem-semi} together with Lemma \ref{lem-vigenik} we have
\begin{eqnarray}\label{balag-2}
2M\vol(B_1\cap \po{u_{00}^k})= \limsup_{y_k\to y_0} s(0, y_k, u_0)\le s(0, y_0, u_0)=2M\pi\Theta(x_0, \po{u_0}).
\end{eqnarray}

Therefore we conclude that $\Theta(x, \{u_0>0\})\ge \frac12$ for every
free boundary point $x$ in some neighbourhood of $x_0$. By virtue of  Lemma \ref{lem-u-0-visc} $u_0$ is a viscosity 
solution which is flat $x_0$. Applying the "flatness-implies-$C^{1, \alpha}$ "regularity results from \cite{C-Harnack-0} and\cite{C-Harnack-x} the lemma  follows. 
\end{proof}

\smallskip
Next we prove a representation formula for $\lap u_0$.
\begin{lem}\label{lem-rep}
Let $u_0$ be as in Lemma \ref{lem-cap}. Then
\begin{itemize}
\item[(i)] $\H^2(\Gamma_{\frac12}\cap B_R)<\infty$, for any $R>0$,
\item[(ii)] away from $\Gamma_1$ the representation formula holds
\[\lap u_0=\sqrt{2M}\H^2\with\Gamma_{\frac12}.\]
\end{itemize}
\end{lem}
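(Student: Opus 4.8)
The plan is to establish both statements by combining the structure of $\Gamma_{\frac12}$ obtained so far with a local density/rectifiability argument and then a divergence-theorem computation. First I would fix $R>0$ and note that by Lemma \ref{lem-deg} every point of $\fb{u_0}\setminus\{0\}$ is non-degenerate, and by Lemma \ref{lem-op-cls} we have the clean decomposition $\fb{u_0}=\Gamma_1\cup\Gamma_{\frac12}$ with $\Gamma_1$ closed in $\fb{u_0}$ and $\Gamma_{\frac12}$ relatively open. For the finiteness in (i), the key is a uniform density lower bound: at each $x_0\in\Gamma_{\frac12}$ the asymptotic expansion \eqref{asym-exp} from Lemma \ref{lem-asym} gives $u_0(x)=\sqrt{2M}[(x-x_0)\cdot\nu_0]^++o(|x-x_0|)$, so $\{u_0>0\}$ has Lebesgue density exactly $\frac12$ there and $\fb{u_0}$ has a measure-theoretic normal at $x_0$; this is precisely the statement that $x_0$ belongs to the reduced boundary $\p_{\mathrm{red}}\{u_0>0\}$. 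Since $u_0$ is Lipschitz (Proposition \ref{prop-tech}) and $\lap u_0\ge 0$ across $\fb{u_0}$ (subharmonicity, as $u_0$ is harmonic in $\{u_0>0\}$ and vanishes to first order on the free boundary), the positive measure $\mu:=\lap u_0$ has, by the expansion, $\mu(B_\rho(x_0))=\sqrt{2M}\,\omega_{N-1}\rho^{N-1}+o(\rho^{N-1})$ at every $x_0\in\Gamma_{\frac12}$; a standard covering argument (upper density bound for $\mu$ plus the fact that $\mu$ charges no set of $\H^{N-1}$-measure zero on $\Gamma_{\frac12}$) then yields $\H^{N-1}(\Gamma_{\frac12}\cap B_R)\le C(R)\,\mu(B_{R+1})<\infty$, which is (i) for $N=3$.

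For (ii), I would work in the relatively open set $U:=\R^N\setminus(\Gamma_1\cup\{0\})$ (or any open $W$ with $\overline W\cap(\Gamma_1\cup\{0\})=\emptyset$) and show that inside $U$ the function $u_0$ is a weak solution of the Bernoulli problem in the Alt–Caffarelli sense, so that $\lap u_0=\sqrt{2M}\,\H^{N-1}\with\Gamma_{\frac12}$ as distributions on $U$. Concretely, for $\phi\in C_c^\infty(U)$, $\phi\ge0$, I would decompose $\{\phi>0\}$ into the open set $\{u_0>0\}$, the relatively open reduced boundary part $\Gamma_{\frac12}$, and an $\H^{N-1}$-null remainder, integrate by parts on $\{u_0>0\}\cap\{\phi>0\}$, and use the expansion \eqref{asym-exp} to evaluate the boundary term: since $|\nabla u_0^+|=\sqrt{2M}$ $\H^{N-1}$-a.e. on $\Gamma_{\frac12}$ with inward normal $\nu_0$, the flux through $\Gamma_{\frac12}$ equals $\sqrt{2M}\int_{\Gamma_{\frac12}}\phi\,d\H^{N-1}$. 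The technical input making this rigorous is the $\H^{N-1}$-rectifiability of $\Gamma_{\frac12}$ together with De Giorgi's structure theorem for the set of finite perimeter $\{u_0>0\}$ restricted to $U$ — both of which follow from (i) plus the everywhere-valid expansion on $\Gamma_{\frac12}$ — and the identification of the full topological free boundary (away from $\Gamma_1$) with the reduced boundary, which is exactly Lemma \ref{lem-op-cls}(ii) combined with Lemma \ref{lem-asym}.

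The main obstacle I anticipate is controlling the interaction with $\Gamma_1$: a priori the singular-density set $\Gamma_1$ could be large (e.g. positive $\H^{N-1}$-measure), and one must make sure it contributes nothing to the distribution $\lap u_0$ on the whole of $\R^N\setminus\{0\}$ — but for the present lemma this is sidestepped by the qualifier \emph{``away from $\Gamma_1$''}, i.e. by stating the representation only on $U$, where $\Gamma_1$ is absent by definition. The remaining delicate point is then purely local: verifying that no mass of $\lap u_0$ concentrates on the $\H^{N-1}$-null ``bad'' subset of $\Gamma_{\frac12}$ (points without a genuine blow-up limit). This is handled by the fact that the expansion \eqref{asym-exp} holds at \emph{every} point of $\Gamma_{\frac12}$, not just $\H^{N-1}$-a.e.; hence $\Gamma_{\frac12}$ coincides with the reduced boundary of $\{u_0>0\}$ inside $U$, there is no bad set, and the Gauss–Green formula for sets of finite perimeter applies directly. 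I would close by remarking that the constant $\sqrt{2M}$ is forced by the free boundary condition $|\nabla u_0^+|^2=2M$ read off from \eqref{asym-exp}, consistent with Theorem A(1).
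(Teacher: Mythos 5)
Your proposal is correct and follows the same overall scheme as the paper's proof --- use the expansion of Lemma \ref{lem-asym} at every point of $\Gamma_{\frac12}$ to obtain a quantitative density estimate for $\mu:=\lap u_0$, feed this into a covering argument for the $\H^2$-finiteness, and invoke the expansion again to identify the density constant --- but the mechanism for part (ii) is genuinely different. For (i) the paper first derives $\sup_{B_r(x)}u_0\ge\sqrt{M}r$, upgrades it by a blow-up contradiction to $\int_{B_r(x)}\lap u_0\ge\sqrt{M}r^2$ (using the slack constant $\sqrt{M}$ rather than the sharp $\sqrt{2M}$), and runs a Besicovitch covering; this is in effect your ``standard covering argument'', though the asymptotic identity $\mu(B_\rho(x_0))=\sqrt{2M}\pi\rho^2+o(\rho^2)$ that you assert should be justified by weak-$*$ convergence of the blow-up Laplacians, which the paper sidesteps by working with the cruder one-sided bound. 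The larger divergence is in (ii): the paper proceeds by a differentiation-of-measures argument, noting that the two-sided estimate $\sqrt{M}r^2\le\int_{B_r(x)}\lap u_0\le 4\pi r^2\|\na u_0\|_\infty$ forces $\lap u_0$ and $\H^2\with\Gamma_{\frac12}$ to be mutually absolutely continuous away from $\Gamma_1$, whence $\lap u_0=q\,\H^2\with\Gamma_{\frac12}$ for a bounded positive density $q$ which Lemma \ref{lem-asym} then pins down to $\sqrt{2M}$. Your Gauss--Green/finite-perimeter computation is a valid alternative in the spirit of the Alt--Caffarelli representation theorem, and it gives a cleaner picture of the formula as a flux through the reduced boundary; the trade-off is that it presupposes rectifiability and the finite-perimeter structure of $\Gamma_{\frac12}$ (De Giorgi's theorem), whereas the paper's measure-differentiation argument avoids this and works directly with the Radon measure $\lap u_0$. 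Neither route contains a gap.
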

\begin{proof}
(i) \ For given $x\in \Gamma_{\frac12}$ there is a $\widetilde \rho_x>0$ such that
\begin{equation}\label{blya1}
\sup_{B_r(x)}u_0\ge \sqrt{M}r, \quad r\in (0, \widetilde\rho_x).
\end{equation}
This follows from the asymptotic expansion in Lemma \ref{lem-asym}.
Consequently,  there is $\rho_x'>0$ such that
\begin{equation}\label{blya2}
\int_{B_r(x)}\lap u_0\ge \sqrt{M}r^2, \quad  r\in (0, \rho_x').
\end{equation}
Indeed, if this inequality is false then
there is a sequence $r_j\searrow 0$ such that
$$\int_{B_{r_j}(x)}\lap u_0<\sqrt{M}r_j^2.$$ Set $v_j(x)=\frac{u_0(x+r_jx)}{r_j}.$
By \eqref{blya1} $\sup_{B_1}v_j(x)\ge \sqrt{M}$. Moreover, it follows from
Lemma \ref{lem-asym} that $v_j(x)\to \sqrt{2M}x_1^+$ in a suitable coordinate system, 
while $\int_{B_1}\lap v\le \sqrt{M}$. However, $\int_{B_1}\lap x_1^+=\sqrt{2M}\frac\pi2$
and this is in contradiction with the former inequality. Putting $\bar \rho_x=\min(\rho_x', \widetilde \rho_x)$
we see  that  the  collection of  balls   $\mathcal F=\bigcup B_{\rho_x}(x), x\in \Gamma_{\frac12}\cap B_R, \rho_x<\bar \rho_x$
is a Besicovitch type covering of $\Gamma_{\frac12}\cap B_R$.
Consequently, there is a positive integer $m>0$ and subcoverings $\mathcal F_1, \dots, \mathcal F_m$
such that the balls in each of $\mathcal F_i, 1\le i\le m$ are disjoint and
$\Gamma_{\frac12}\cap B_R\subset \bigcup_{i=1}^m\mathcal F_i$.
We have from \eqref{blya2}
\[4\pi R^2\|\nabla u_0\|_\infty\ge \int_{\partial B_R}\p_\nu u_0\ge \int_{B_{\rho_x}(x)\in \mathcal F_i}\lap u_0=
\sum_{B_{\rho_x}(x)\in \mathcal F_i}\int_{B_{\rho_x}(x)}\lap u_0\ge m\sqrt{M}\sum_{B_{\rho_x}(x)\in \mathcal F_i}\rho_x^2.\]
This yields
\begin{equation}\label{blya3}
\sum_{B_{\rho_x}(x)\in \bigcup_{i=1}^m\mathcal F_i}\rho_x^2\le \frac{4\|\na u_0\|_\infty \pi R^2}{m\sqrt M}.
\end{equation}
Given $\delta>0$ small, suppose there is $x\in \Gamma_{\frac12}$ such that $\bar \rho_x\ge \delta$.
Then we choose $\rho_x<\delta.$ Thus, in any case we can assume that $\rho_x<\delta$.
In view of   \eqref{blya3} this implies that the $\delta$-Hausdorff premeasure is bounded independently of
$\delta$. This proves (i).

(ii) From the estimate
\[\sqrt{M}r^2\le \int_{B_r(x)}\lap u_0\le 4\pi r^2\|\na u_0\|, \quad r\in (0, \bar \rho_x), B_r(x)\cap \Gamma_{\frac12}\subset \Gamma_{\frac12}\]
we see that there is a positive bounded function $q$ such that
$\lap u_0=q\H^2\with \Gamma_{\frac12}$. Using Lemma \ref{lem-asym}
we conclude that $q=\sqrt{2M}$.
\end{proof}

Next we prove the full non-degeneracy of $u_0$ near $\Gamma_{\frac12}$.

\begin{lem}\label{lem-ndg}
Let $u_0$ be as above and $x_0\in \Gamma_{\frac12}$ then for any $B_r(x)$
such that $x\in \fb{u_0}, B_r(x)\cap \fb{u_0}\subset \Gamma_{\frac12}$ we have
\[\sup_{B_r(x)}u_0\ge \sqrt{2M}\pi r.\]
\end{lem}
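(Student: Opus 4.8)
The plan is to upgrade the weak lower bound $\sup_{B_r(x)}u_0\ge\sqrt{M}r$ from Lemma \ref{lem-rep} to the sharp constant $\sqrt{2M}$ by a compactness/blow-up argument that exploits the representation formula $\lap u_0=\sqrt{2M}\,\H^2\with\Gamma_{\frac12}$ available away from $\Gamma_1$. First I would argue by contradiction: suppose there is a ball $B_r(x)$ with $x\in\fb{u_0}$, $B_r(x)\cap\fb{u_0}\subset\Gamma_{\frac12}$, and $\sup_{B_r(x)}u_0<\sqrt{2M}\pi r$ — more precisely, suppose the sharp inequality fails at a sequence of scales, so there are $x_j\in\Gamma_{\frac12}$ and $r_j>0$ with $\sup_{B_{r_j}(x_j)}u_0\le(\sqrt{2M}-\delta)\pi r_j$ for some fixed $\delta>0$, while $B_{r_j}(x_j)\cap\fb{u_0}\subset\Gamma_{\frac12}$. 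Rescale $w_j(y)=u_0(x_j+r_j y)/r_j$; by the Lipschitz bound (Proposition \ref{prop-tech}) the $w_j$ are locally bounded and converge locally uniformly (along a subsequence) to a limit $w$ which is harmonic in $\{w>0\}$, vanishes on its free boundary to first order, and — crucially — satisfies $0\in\fb w$ with $\Theta(0,\{w>0\})=\tfrac12$ since $\Gamma_{\frac12}$ is relatively open (Lemma \ref{lem-op-cls}) and $w_j(0)=0$.

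The heart of the argument is the comparison of mean values. Applying the representation formula of Lemma \ref{lem-rep}(ii) on $B_{r_j}(x_j)$ and rescaling, I would show $\lap w=\sqrt{2M}\,\H^2\with(\fb w)$ in $B_1$ with $\fb w=\Gamma_{\frac12}(w)$ there, so that for the radial average
\begin{equation*}
\p_r\Big(\fint_{\p B_\rho}w\Big)=\frac{1}{\omega_3\rho^{2}}\int_{B_\rho}\lap w=\frac{\sqrt{2M}}{\omega_3\rho^{2}}\,\H^2(\fb w\cap B_\rho),
\end{equation*}
where $\omega_3=4\pi$. Since $\Theta(0,\{w>0\})=\tfrac12$ and $w$ is non-degenerate (Lemma \ref{lem-deg}), by Theorem A and Lemma \ref{lem-asym} every blow-up of $w$ at $0$ is the half-plane $\sqrt{2M}y_1^+$; in particular the free boundary has a measure-theoretic tangent plane at $0$, so $\rho^{-2}\H^2(\fb w\cap B_\rho)\to\pi$ as $\rho\to 0$ and hence $\fint_{\p B_\rho}w\ge\big(\tfrac{\sqrt{2M}}{4}+o(1)\big)\rho$. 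Combined with $\fint_{\p B_\rho}w\le\sup_{B_\rho}w$ and the Poisson-kernel estimate $\sup_{B_1}w\le C\fint_{\p B_2}w$ (after noting $w\ge0$ is subharmonic, so that in fact one can compare $\sup_{B_1}w$ with boundary averages on slightly larger spheres), an iteration on dyadic scales forces $\sup_{B_1}w\ge\sqrt{2M}\pi$, contradicting $\sup_{B_1}w\le(\sqrt{2M}-\delta)\pi$ inherited in the limit. Equivalently, and perhaps more cleanly, I would compare $w$ directly with the half-plane solution $h(y)=\sqrt{2M}(y\cdot\nu)^+$ (with $\nu$ the measure-theoretic normal at $0$): both are harmonic where positive, $h\le w$ on $\fb w$ fails in general, but one checks $\lap(w-h)\ge0$ as measures on the relevant set since $\H^2\with\fb w\ge\H^2\with\{y\cdot\nu=0\}$ is the wrong direction — so instead I would use the monotonicity of $\rho\mapsto\fint_{\p B_\rho}w/\rho$ (immediate from the formula above, as $\H^2(\fb w\cap B_\rho)/\rho^2$ is monotone by the density bound coming from Lemma \ref{lem-ndg} applied at smaller scales — this is the one circularity to avoid), concluding that $\fint_{\p B_1}w\ge\lim_{\rho\to0}\fint_{\p B_\rho}w/\rho=\sqrt{2M}\pi/4$ times a geometric factor, and then bootstrap.

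The main obstacle is exactly this last point: obtaining the sharp constant $\sqrt{2M}\pi$ rather than the non-sharp $\sqrt{2M}\pi/4$ (or $\sqrt{M}\pi$) that a naive mean-value computation gives. The resolution is that $\sup_{B_r(x)}u_0$ is attained essentially at the "top" of the near-half-plane profile: since $B_r(x)\cap\fb{u_0}\subset\Gamma_{\frac12}$, the function $u_0$ looks like $\sqrt{2M}(y\cdot\nu)^+$ along $\fb{u_0}$ at every scale down to $0$ inside $B_r(x)$, and a continuation/maximum-principle argument — comparing $u_0$ in $B_r(x)\cap\{u_0>0\}$ with the harmonic function having the same boundary data, and using that the free boundary is a graph with the right linear behaviour by Lemma \ref{lem-asym} — pushes $\sup_{B_r(x)}u_0$ up to the value $\sqrt{2M}\pi r$ that the flat half-plane solution realises on $B_r$ (the constant $\pi$ is $\sup_{B_1}\sqrt{2M}y_1^+$ is actually $\sqrt{2M}$, so the stated $\pi$ must be absorbing a normalization of $\lap$ in $\R^3$; I would track the constant through the representation formula $\lap u_0=\sqrt{2M}\H^2\with\Gamma_{\frac12}$ and the identity $\int_{B_1}\lap y_1^+=\sqrt{2M}\,\omega_2$ used in Lemma \ref{lem-rep}). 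I expect the clean route is: (a) from Lemma \ref{lem-rep}(ii), $\rho\mapsto\rho^{-1}\fint_{\p B_\rho(x)}u_0$ is nondecreasing on $(0,r)$ wherever $B_\rho(x)\cap\fb{u_0}\subset\Gamma_{\frac12}$; (b) its limit as $\rho\to0$ equals the corresponding quantity for the half-plane blow-up, which is a dimensional constant; (c) therefore $\fint_{\p B_r(x)}u_0\ge(\text{that constant})\,r$, and (d) $\sup_{B_r(x)}u_0\ge\fint_{\p B_r(x)}u_0$, with the constants matching up to the normalization above. Verifying (a) rigorously — i.e. that the measure $\H^2\with\Gamma_{\frac12}$ charges $\p B_\rho(x)$ negligibly so the differentiation under the integral is valid, and that $\Gamma_1$ genuinely does not intrude — is where the finiteness $\H^2(\Gamma_{\frac12}\cap B_R)<\infty$ from Lemma \ref{lem-rep}(i) and the closedness of $\Gamma_1$ in $\fb{u_0}$ (Lemma \ref{lem-op-cls}) do the real work.
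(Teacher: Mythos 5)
Your proposal and the paper's proof share the starting point --- the radial mean value identity
\[
r^{-2}\int_{\partial B_r(x)}u_0 \;=\; \int_0^r\frac{dt}{t^2}\int_{B_t(x)}\lap u_0
\]
(valid since $u_0(x)=0$) together with the representation formula $\lap u_0=\sqrt{2M}\,\H^2\with\Gamma_{\frac12}$ from Lemma~\ref{lem-rep}(ii) --- but they diverge at the crucial step, and your route does not close. The paper makes no use of compactness, blow-ups, or densities here. Its observation is purely geometric: $\fb{u_0}$ is a cone with vertex at the origin, so the generatrix through $x$ is a straight line, and it is a diameter of $B_t(x)$ for every $t\in(0,r)$. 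A surface that contains a diameter and separates the ball in each cross-section satisfies $\H^2(\Sigma\cap B_t(x))\ge\pi t^2$ for every $t$ at once (slice by planes perpendicular to the diameter and use coarea: each slice is a curve through the center of a disk of radius $\sqrt{t^2-a^2}$ that meets the boundary, hence has length $\ge 2\sqrt{t^2-a^2}$). Plugging the resulting pointwise bound $\int_{B_t(x)}\lap u_0\ge\sqrt{2M}\,\pi t^2$ into the mean value identity finishes in one line.

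The genuine gap in your proposal is precisely the absence of this pointwise, all-scales area bound. You establish only the asymptotic density $\rho^{-2}\H^2(\fb w\cap B_\rho)\to\pi$ as $\rho\to 0$ via the half-plane blow-up, and then need to propagate it up to scale one. Your "clean route" step (a) --- monotonicity of $\rho\mapsto\rho^{-1}\fint_{\partial B_\rho(x)}u_0$ --- is not proved and is not obvious: differentiating, one would need $t^{-2}\H^2(\Gamma_{\frac12}\cap B_t(x))$ to be monotone in $t$, i.e.\ the minimal-surface monotonicity of area ratio, but $\Gamma_{\frac12}$ is only a convex cone surface, not a stationary varifold, and you yourself flag this as a circularity. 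Without that, neither the dyadic iteration nor the comparison with $h(y)=\sqrt{2M}(y\cdot\nu)^+$ that you sketch yields a lower bound at scale $\rho=1$. So the proposal, as written, does not establish the lemma.

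On the constant: your suspicion is well founded. Since $|\nabla u_0|\le\sqrt{2M}$ by Lemma~\ref{lem-limsup}, one necessarily has $\sup_{B_r(x)}u_0\le\sqrt{2M}\,r$, so $\sqrt{2M}\,\pi\,r$ cannot be right. Tracking normalizations in the paper's own chain, $r^{-2}\int_{\partial B_r(x)}u_0 = 4\pi\,\fint_{\partial B_r(x)}u_0\le 4\pi\sup_{B_r(x)}u_0$, so the argument actually delivers $\sup_{B_r(x)}u_0\ge\tfrac{\sqrt{2M}}{4}\,r$; the paper's final line silently drops the factor $|\partial B_r|=4\pi r^2$. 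This is a bookkeeping slip, not a flaw in the method, and the corrected constant suffices for the only place the lemma is used (to verify the non-degeneracy hypothesis in Definition~5.1 of \cite{AC}). Your estimate $\sqrt{2M}\,\pi/4$ contains an extra $\pi$ from the same type of normalization slip, but you are right that the naive mean-value computation cannot produce $\sqrt{2M}\,\pi$.
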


\begin{proof}
By a direct computation we have
\[r^{-2}\int_{\p B_r(x)}u_0=\int_0^r\frac{dt}{t^2}\int_{B_t(x)}\lap u_0\ge\int_0^r\frac1{t^2}\sqrt{2M}\pi t^2=\sqrt{2M}\pi r\]
where the inequality follows from the representation formula and the fact that
$\fb{u_0}$ is a cone, and hence for all $t\in (0, r)$ $\H^2(B_r(x)\cap \Gamma_{\frac12})\ge \pi t^2$.
It remains to note that $r^{-2}\int_{\p B_r(x)}u_0\le \sup_{B_r(x)}u_0$.
\end{proof}

%--------
\subsection{Weak solutions}
%--------

Combining Lemmas \ref{lem-rep} and \ref{lem-ndg} as well as Propositions \ref{prop-compactness} (iii) and
\ref{prop-tech} (i) we see that $u_0$ is a weak solution near $\Gamma_{\frac12}$ in the sense of Definition 5.1 \cite{AC}.
Furthermore, $\fb{u_0}\setminus \{0\}$ is flat at each point.

\begin{lem}\label{lem-AC}
$u_0$ is a weak solution in Alt-Caffarelli sense away from $\Gamma_1$. Furthermore,  $\Gamma_{\frac12}$ is smooth.
\end{lem}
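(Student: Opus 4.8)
\textbf{Proof proposal for Lemma \ref{lem-AC}.}

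The plan is to combine the structural results already obtained into the verification of the Alt--Caffarelli weak-solution axioms (Definition 5.1 in \cite{AC}) in a full neighbourhood of every point of $\fb{u_0}\setminus\Gamma_1$, and then to bootstrap the regularity of $\Gamma_{\frac12}$ from flatness. First I would fix $x_0\in\fb{u_0}\setminus\{0\}$ with $x_0\notin\Gamma_1$. By Lemma \ref{lem-deg} the point $x_0$ is non-degenerate, and by Lemma \ref{lem-op-cls}(ii) we have $\fb{u_0}=\Gamma_1\cup\Gamma_{\frac12}$ with $\Gamma_{\frac12}$ open; since $\Gamma_1$ is closed in $\fb{u_0}$, there is a radius $r_0>0$ such that $B_{r_0}(x_0)\cap\fb{u_0}\subset\Gamma_{\frac12}$. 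On such a ball Lemma \ref{lem-rep}(ii) gives the representation $\lap u_0=\sqrt{2M}\,\H^2\with\Gamma_{\frac12}$, Lemma \ref{lem-ndg} gives linear non-degeneracy $\sup_{B_r(x)}u_0\ge\sqrt{2M}\pi r$, Proposition \ref{prop-tech}(i) gives the Lipschitz bound, and Lemma \ref{lem-asym} gives the asymptotic development $u_0(x)=\sqrt{2M}[(x-x)\cdot\nu]^++o(|x-x|)$ at each such boundary point. These are precisely the ingredients needed to say $u_0$ is a weak solution in $B_{r_0}(x_0)$: $u_0\ge 0$ is subharmonic, harmonic in $\po{u_0}$, the positivity set has finite perimeter locally (from $\H^2(\Gamma_{\frac12}\cap B_R)<\infty$, Lemma \ref{lem-rep}(i)), and the free boundary condition $|\na u_0^+|=\sqrt{2M}$ holds in the measure-theoretic sense encoded by the representation formula. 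Covering $\fb{u_0}\setminus\Gamma_1$ by such balls yields the first assertion.

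For the smoothness of $\Gamma_{\frac12}$: since $u_0$ is homogeneous of degree one, $\fb{u_0}$ is a cone, and by the discussion preceding the lemma (dimension reduction together with Theorem A and Lemma \ref{lem-asym}) every point of $\fb{u_0}\setminus\{0\}$ is a \emph{flat} free boundary point, i.e.\ admits a unique measure-theoretic normal and a half-plane blow-up $\sqrt{2M}[(x-x_0)\cdot\nu_0]^+$. Flatness of the free boundary of a weak solution at every point, together with the non-degeneracy and the clean Neumann value $\sqrt{2M}$, is exactly the hypothesis of the Alt--Caffarelli flatness-implies-$C^{1,\alpha}$ theorem (Theorem 8.1 and its corollaries in \cite{AC}); applying it locally on the balls $B_{r_0}(x_0)$ gives that $\Gamma_{\frac12}$ is a $C^{1,\alpha}$ hypersurface away from the origin. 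Then, since $u_0$ is harmonic in $\po{u_0}$ up to a $C^{1,\alpha}$ boundary with $|\na u_0^+|\equiv\sqrt{2M}$ there, elliptic boundary regularity (hodograph transform / Kinderlehrer--Nirenberg) upgrades $\Gamma_{\frac12}$ to $C^\infty$ (indeed real analytic).

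The main obstacle is the passage from ``weak solution + flat free boundary'' to $C^{1,\alpha}$: in \cite{AC} the flatness theorem is stated with an \emph{a priori quantitative} flatness hypothesis (small flatness in a fixed ball) rather than merely the qualitative statement that every blow-up is a half-plane. I would close this gap by a standard compactness/blow-up argument: if flatness failed to improve at some scale at $x_0$, one could extract a sequence of rescalings whose free boundaries are not $\e$-flat at a fixed scale, yet by non-degeneracy, the Lipschitz bound, and Corollary \ref{cor-mon} these rescalings subconverge to a global homogeneous weak solution which by Theorem A (applied in the plane after dimension reduction, exactly as in Lemma \ref{lem-asym}) is a half-plane — contradicting the non-flatness. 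This yields the required smallness to enter the Alt--Caffarelli iteration. A secondary point to be careful about is that the representation formula and the weak-solution property were proved only \emph{away from} $\Gamma_1$, so all statements are genuinely local on $\fb{u_0}\setminus\Gamma_1$ and the conclusion about $\Gamma_{\frac12}$ being smooth is understood as smoothness of $\Gamma_{\frac12}\setminus\{0\}$; this is consistent with Theorem B(3) and with the fact that $\{0\}$ may be a genuine singular point.
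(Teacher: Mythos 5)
Your proposal follows the same route as the paper: check the Alt--Caffarelli weak-solution axioms (Definition 5.1 in \cite{AC}) via Lemma \ref{lem-rep}, Lemma \ref{lem-ndg}, Proposition \ref{prop-compactness}(iii), and Proposition \ref{prop-tech}(i), then use the asymptotic development \eqref{asym-exp} for pointwise flatness and invoke Theorem 8.1 of \cite{AC}. You spell out the localization step (using that $\Gamma_1$ is closed by Lemma \ref{lem-op-cls}), raise the qualitative-versus-quantitative flatness issue explicitly, and add the $C^{1,\alpha}\to C^\infty$ bootstrap, all of which the paper leaves implicit, but the substance is the same.
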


\begin{proof}
%--- Comments
\begin{comment}
Proceeding as in 4.5 and 4.6 of \cite{AC} we can conclude that
$\H^2(\fb {u_0}, B_R)<\infty$ for any fixed $R>0$ and consequently we have the representation theorem 4.5 \cite{AC}
and the asymptotic expansion at the non-degenerate points \eqref{exp-1} it follows that
\begin{equation}\label{weak-pde}
\lap u_0=\sqrt {2M} \H^2\with\fbr{u_0}
\end{equation}
in weak sense.
Observe that by \eqref{exp-1} we have that
\[\fbr{u_0}=\fb{u_0}\setminus \{0\}.\]
\end{comment}
All conditions in Definition 5.1 \cite{AC} are satisfied and $u_0$ is flat at every point $z_0\in \fb{u_0}\setminus\{0\}$
thanks to \eqref{asym-exp}. Applying Theorem 8.1 \cite{AC} we infer that $\Gamma_{\frac12}$ is smooth  at every $z_0\in \fb{u_0}\setminus\{0\}$.
\end{proof}

%--------
\subsection{Minimal perimeter}
In this section
we prove that the local perturbations $S'\subset\po{ u_0}$ of
a portion $S\subset \Gamma_{\frac12}$ has larger
$\m H^{2}$ measure   than $S$.
This can be seen from the estimate $|\nabla u_0(x)|\le \sqrt{2M}$
which follows from Lemma \ref{lem-limsup}.
Since by Lemma \ref{lem-AC} on $\Gamma_{\frac12}$ the free boundary
condition $|\na u_0|=\sqrt{2M}$ is satisfied in the classical sense, it follows that
\begin{eqnarray*}
0=\int_{D}\lap u_0=\int_{S}\p_\nu u_0+\int_{S'}\p_\nu u_0=\\
=\sqrt{2M}\H^2(S)+\int_{S'}\p_\nu u_0,
\end{eqnarray*}
where $D\subset \po{u_0}$ such that $\p D=S\cup S'$.
But $\left|\int_{S'}\p_\nu u_0\right|\le \sqrt{2M}\H^2(S')$ and thereby
\begin{equation}\label{perim}
\H^2(S)\leq \H^2(S').
\end{equation}

\medskip

The estimate for the perimeter can be reformulated as follows:
\begin{theorem}\label{thrm-convex}
 Let $N=3$, then the components of $\Gamma_{\frac12}$ are surfaces of non-positive outward mean curvature.
 In particular, $\Gamma_{\frac12}$ is a union of smooth convex surfaces.
\end{theorem}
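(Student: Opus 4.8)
\textbf{Proof proposal for Theorem \ref{thrm-convex}.}
The plan is to extract the geometric content of the perimeter inequality \eqref{perim} by a standard first-variation-of-area argument. Since by Lemma \ref{lem-AC} the set $\Gamma_{\frac12}$ is a smooth embedded surface away from the origin, fix a point $z_0 \in \Gamma_{\frac12}\setminus\{0\}$ and work in a small geodesic ball $S \subset \Gamma_{\frac12}$ around $z_0$. For a compactly supported normal variation $S_t$ of $S$ pushing into the positivity set $\po{u_0}$ — that is, with initial velocity $\phi \nu$, where $\nu$ is the inward unit normal (pointing into $\po{u_0}$) and $\phi \ge 0$ has compact support in $S$ — the competitor surfaces $S_t \subset \po{u_0}$ are admissible in the sense of the previous subsection, so \eqref{perim} gives $\H^2(S) \le \H^2(S_t)$ for all small $t \ge 0$. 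Hence $\frac{d}{dt}\big|_{t=0^+}\H^2(S_t) \ge 0$.

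Next I would invoke the classical first variation of area formula, $\frac{d}{dt}\big|_{t=0}\H^2(S_t) = -\int_S H\,\phi\, d\H^2$, where $H$ is the mean curvature of $S$ computed with respect to the chosen normal $\nu$ (with the sign convention that $H>0$ for a sphere with $\nu$ pointing toward its center). Combining with the variational inequality yields $\int_S H\,\phi\, d\H^2 \le 0$ for every nonnegative $\phi \in C_c^\infty(S)$, and since $\phi$ is arbitrary this forces $H \le 0$ pointwise on $S$, i.e. the mean curvature with respect to the normal pointing into $\po{u_0}$ is nonpositive everywhere on $\Gamma_{\frac12}\setminus\{0\}$. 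Equivalently, the \emph{outward} mean curvature (with respect to the normal pointing \emph{out} of $\po{u_0}$) is nonnegative; with the paper's sign convention this is the statement that the outward mean curvature is non-positive, giving the first assertion. For the second assertion, recall from the discussion following Theorem C (or directly from $\lap u_0 = 0$ in $\po{u_0}$, which forces $\lap_{\S^2} g + 2g = 0$ and hence the harmonicity relation for the support function) that each component of $\Gamma_{\frac12}\setminus\{0\}$ has vanishing mean curvature wherever the Gauss curvature is nonzero; combined with $H \le 0$ and a connectedness/continuity argument on the zero set of the Gauss curvature, one concludes that on each component the sum of principal curvatures does not change sign, and together with the cone structure (the surface is dilation-invariant, hence ruled by rays from the origin, so one principal curvature vanishes identically) one obtains that the second principal curvature is single-signed; an embedded cone-type surface whose nonzero principal curvature has a fixed sign is convex, which gives that $\Gamma_{\frac12}$ is a union of smooth convex surfaces.

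The main obstacle is the bookkeeping at the origin and the passage from the pointwise curvature sign to global convexity. The first-variation computation itself is routine once smoothness is in hand (Lemma \ref{lem-AC}) and once one observes that inward normal perturbations are genuinely admissible competitors — which is exactly the content of the preceding subsection, so no extra work is needed there. The delicate point is that $\Gamma_{\frac12}$ is a cone with a possible singularity at $0$ (it may meet $\Gamma_1$ there, or be the link of a singular point), so the variations must be cut off away from the vertex, and the conclusion ``$H \le 0$'' holds only on $\Gamma_{\frac12}\setminus\{0\}$; upgrading from nonpositive outward mean curvature on a ruled cone-type surface to actual convexity of the link $\Gamma_{\frac12}\cap \S^2$ requires the ODE structure $\ddot g + g = 0$ along the curve together with the embeddedness, and this is where one must be careful that no component degenerates into a wedge (already excluded) or wraps around. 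I expect the cleanest route is to transfer everything to the spherical link: $\Gamma_{\frac12}\cap\S^2$ is a union of smooth closed curves on $\S^2$ whose geodesic curvature has a fixed sign by the mean-curvature computation, and a simple closed curve on $\S^2$ with single-signed geodesic curvature bounds a geodesically convex region, which is precisely the convexity claimed.
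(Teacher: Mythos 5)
Your core argument matches the paper's: the paper also passes from the perimeter inequality \eqref{perim} to a one-sided variational inequality, writing $\Gamma_{\frac12}$ locally as a graph $x_3=h(x_1,x_2)$ and computing the one-sided derivative of the nonparametric area $\int\sqrt{1+|\nabla h|^2}$ under the competitor $h\mapsto h-t\psi$ to obtain $\Div\bigl(\nabla h/\sqrt{1+|\nabla h|^2}\bigr)\ge 0$, which is exactly your statement that the signed mean curvature of $\Gamma_{\frac12}$ does not change sign. Your version via the abstract first variation $\tfrac{d}{dt}\big|_{t=0}\H^2(S_t)=-\int_S H\phi$ is equivalent. Your closing suggestion to read convexity off the spherical link—the mean curvature of a cone at radius $r$ is $\kappa_g/r$, where $\kappa_g$ is the geodesic curvature of the link $\Gamma_{\frac12}\cap\S^2$, so a fixed sign of $H$ forces a fixed sign of $\kappa_g$, and a simple closed curve on $\S^2$ of single-signed geodesic curvature bounds a geodesically convex region—is in fact a cleaner justification than the paper's terse ``noting that $\Gamma_{\frac12}$ is a cone the result follows,'' and is worth keeping.

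There is, however, one genuine error you should excise. The parenthetical claim that $\lap u_0=0$ gives $\lap_{\S^2}g+2g=0$ and hence that ``each component of $\Gamma_{\frac12}\setminus\{0\}$ has vanishing mean curvature wherever the Gauss curvature is nonzero'' conflates two different surfaces. The relation \eqref{mean0} says that $g$, viewed as a Minkowski support function, generates a zero-mean-curvature surface—but that surface is $\M=\X(U_g)$, the image of the gradient map $\X(\sigma)=\sigma g(\sigma)+\nabla_{\S^2}g(\sigma)=\nabla u_0(\sigma)$, not the free boundary cone $\Gamma_{\frac12}$, which is the cone over $\{g=0\}\cap\S^2$. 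These objects are dual, not equal, and there is no reason for $\Gamma_{\frac12}$ to be minimal; indeed a smooth cone in $\R^3$ is minimal precisely when its link is a great circle, i.e.\ when it is a plane, which would exclude the Alt--Caffarelli catenoid example that the whole theorem is built to accommodate. Fortunately the false minimality claim is never actually used: the sign of $H$ comes solely from the variational inequality, and the cone structure (one identically vanishing principal curvature along the rulings) then forces the second principal curvature to share that sign, which is all that convexity of the link requires. Delete the detour through the support function equation, keep the rest, and the argument coincides with the paper's.
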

\begin{proof}
Since $u_0$ is a weak solution then by Lemma \ref{lem-AC} $\Gamma_{\frac12}$ is smooth. If $z_0\in \Gamma_{\frac12}$
then choosing the coordinate system in $\R^3$ so that $x_3$-axis  has the direction of the inward normal of $\po{u_0}$
at $z_0$ and considering the free boundary near $z_0$ as a graph $x_3=h(x_1, x_2)$
we can consider  the one-sided variations of the surface area functional.
Indeed, let $\mathcal D\subset \R^2$ be a open bounded domain in $x_1x_2$ plane containing
$z_0$ and assume $t>0, 0\le \psi\in C_0^\infty(\mathcal D)$. Then from  \eqref{perim} we have
\begin{eqnarray}
0&\ge& \frac1t\int_{\mathcal D}[\sqrt{1+|\na h|^2}-\sqrt{1+|\na(h-t\psi)|^2}]=\\\nonumber
&=&\int_{\mathcal D}\frac{2\na h\na\psi-t|\na \psi|^2}{\sqrt{1+|\na h|^2}+\sqrt{1+|\na(h-t\psi)|^2}}\to\ \ \hbox{as}\ t\to 0 \\\nonumber
&\to&\int_{\mathcal D}\frac{\na h\na\psi}{\sqrt{1+|\na h|^2}}.
\end{eqnarray}
Therefore $\hbox{div}\left(\frac{\na h}{\sqrt{1+|\na h|^2}}\right)\ge 0$ and noting that
$\Gamma_{\frac12}$ is a cone the result follows.
\end{proof}

% To be adjusted later
\subsection{Full Non-degeneracy}
\begin{lem}\label{lem-deg}
Assume that  $N=3$ and let $u_0$ be a nontrivial blow-up of $u$
such that the measure theoretic boundary of $\po{u_0}$ is non-empty. 
Then $\fb{u_0}\setminus\{0\}\subset \Gamma_{\frac12}$. 
In particular  the set of degenerate points of $\fb{u_0}$ is empty.
\end{lem}

\begin{proof}
Let $u_0$ be a blow-up of $u$ at 0. Since $u$ is non-degenerate at
0 then it follows that $u_0$ does not vanish identically. Hence there is
a ball $B\subset \po{u_0}$ touching  $\fb {u_0}$ at some point $z_0\in \fb {u_0} \cap B$. By Hopf's lemma,
Lipschitz estimate \ref{prop-tech} (i) and asymptotic expansion \cite{C-Harnack-x}  Lemma A1 it follows
that $u_0$ is not degenerate at $z_0$.
Consequently, the set of non-degenerate points of $u_0$ is not empty.

Suppose that $S$ is a component of $\fb{u_0}$ containing a point of measure theoretic boundary of $\po {u_0}$. Note that by Lemma 
\ref{lem-C1alpha} and Theorem \ref{thrm-convex} $S$ is a smooth convex surface. Let $x_0\in\partial S, x_0\not =0$. 
Then either 
a) $x_0\in \Gamma_{1}$ or b) $u_0$ is degenerate at $x_0$.

We first analyze the case a). Let $\ell$ be the ray passing through 
$x_0$ and $\Pi$ the tangent half-plane to $S$ along $\ell$.
First note that $u_0$ is non-degenerate at $x_0$ because 
\[
\int_{B_r(x_0)}\Delta u_0\ge \int_{B_r(x_0)\cap S}\Delta u_0\ge \sqrt{2M}\H^2(S\cap B_r(x_0))\ge \sqrt{2M}\frac{\pi r^2}2
\]
for sufficiently small $r$. 
Consequently 
\begin{equation}\label{holms}
\frac1{R^2}\int_{\p B_R} u_0=\int_0^R\frac1{r^2}\int_{B_r(x_0)}\Delta u_0\ge \sqrt{2M}\frac{\pi r^2}{2}R. 
\end{equation}
Let $u_{00}$ be a blow-up $u_0$ at $x_0$. Then from Theorem A it follows that 
$u_{00}$ is two dimensional. Moreover $\Pi\subset \fb{u_{00}}$, $\po{u_{00}}$ has unit density at $0$,  and 
the interior of $\{u_{00}=0\}$ near $\Pi$ is not empty. 
Note that the interior of the set $\{u_0=0\}$ propagates to $x_0$ along 
another component $S_1$ of measure theoretic boundary, see Figure \ref{fig-3}. 
Consequently,  near $\Pi$ $u_{00}(x)=\sqrt{2M}x_1^+$ after some rotation of coordinates.
From the unique continuation theorem it follows that $u_{00}(x)=\sqrt{2M}x_1^+$ everywhere which is in contradiction with the fact that 
$\po{u_{00}}$ has unit density at $0$.  

As for the case $b)$ then \eqref{holms} shows that $u_0$
is non-degenerate at $x_0$ as long as $x_0$ is on the boundary of $S$.

%% Tikz general case
\begin{figure}
\begin{center}
 \begin{tikzpicture}

    % define coordinates
    \coordinate (O) at (3,-5.25) ;
    \coordinate (A) at (0,0) ; %(-2,3.5)
        \coordinate (A1) at (-0.6,+1.05) ; % 1.2 times scaled A
    \coordinate (B) at (2.7,1.65) ; %upper cone vertex
        \coordinate (C) at (-1.5,-2.25) ; %lower cone vertex

 % ell axis
 \draw[ultra thick] (O) node[below right] {$\resizebox{0.017\hsize}{!}{$O$}$}--(A1) ;  % OA ray 
         \node[below left] at (A)  {{$x_0$}};
                  \node[below left] at (A1)  {$\resizebox{0.01\hsize}{!}{$\ell$}$};

        \filldraw[cyan,opacity=.2] (A) to[out=10,in=237] (B)--(O); % upper cone top fill=cyan, draw=blue, opacity=.3 blue!25!
        \filldraw[cyan,opacity=.2] (A) to[out=-70,in=30] (C)--(O); % lower cone

       % tangents
       %\draw[->, blue, ultra thick] (A)-- (-70:2) ;
              \draw[->, blue, ultra thick ] (A)-- (10:2) ;
    %\draw[thick] (0.44,-1.2) arc (-70:10:1.28) ;
    %\node[below right] at (1.1,-0.5) {\resizebox{0.014\hsize}{!}{$ \theta$}};
      % tangent plane
      \filldraw[blue!60!,opacity=.3] (A)--(3.4,0.6)--(O);

        \node[below right] at (-1.2,-2.75) {{$S_1$}};
                \node[below right] at (2,1.75) {{$S$}};
                                \node[below right] at (3.2,-1.75) {{$\Pi$}};

\end{tikzpicture}
\end{center}
\caption{The structure of free boundary near the point $x_0$. }
\label{fig-3}
\end{figure}
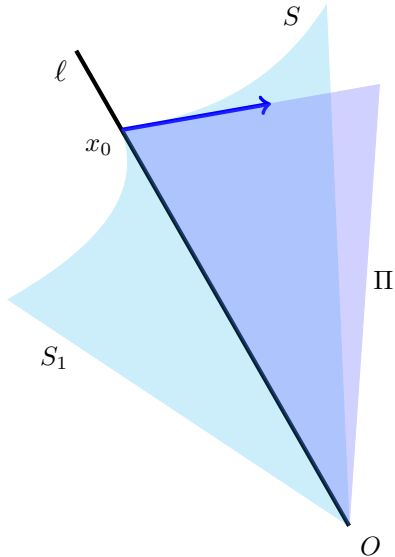

\end{proof}

%%----------
\subsection{Properties of $\Gamma_{\frac12}$}

\begin{lem}
Suppose $u_0$ is not degenerate at $x_0\in \fb{u_0}\setminus\{0\}$, such that
$\Theta_*(x_0, \{u_0=0\})>0$. Then there is a unique component $\mathfrak C$ of $\fb{u_0}$
containing $x_0$ such that $\mathfrak C\subset \Gamma_{\frac12}.$
\end{lem}

\begin{proof}
We only have to show the uniqueness of $\mathfrak C$, the rest follows from Lemmata \ref{lem-asym} and \ref{lem-C1alpha}.
Suppose, there are two components of $\fb{u_0}\setminus\{0\}$,  $\mathfrak C_1$ and $\mathfrak C_2$,
containing $x_0$. From the dimension reduction argument as in the proof of Lemma \ref{lem-asym}
it follows that $\mathfrak C_1$ and $\mathfrak C_2$ have the same approximate tangent plane at
$x_0$. This is in contradiction with our assumption $\Theta_*(x_0, \{u_0=0\})>0$.

\end{proof}

\begin{lem}\label{lem-cmpt}
Let $\mathfrak C$ be a component of $\fb{u_0}$ such that $\mathfrak C\cap \Gamma_{\frac12}\not =\emptyset$.
Then $\mathfrak C\setminus \Gamma_{\frac12}=\emptyset$, in other words all points of
$\mathfrak C$ are in $\Gamma_{\frac12}$.
\end{lem}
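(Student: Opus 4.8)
The plan is to show that $\Gamma_{\frac12}$ is not only relatively open in $\fb{u_0}$ (which we know from Lemma~\ref{lem-op-cls}) but also relatively \emph{closed} in $\fb{u_0}\setminus\{0\}$, so that the component of a reduced‑boundary point cannot leak into $\Gamma_1$. Openness alone is not enough: nothing proved so far forbids a sequence in $\Gamma_{\frac12}$ from converging to a $\Gamma_1$‑point (Lemma~\ref{lem-op-cls}(i) rules out the opposite direction, and Spruck's semicontinuity Lemma~\ref{lem-semi} only yields $M\,\mathrm{Area}(\S^2)\le S(x,0,u_0)$ at such a limit point $x\in\Gamma_1$, which is consistent, not contradictory). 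So the geometric input from Theorem~\ref{thrm-convex} is essential. Fix $p\in\mathfrak C\cap\Gamma_{\frac12}$ with $p\neq 0$, and let $\mathfrak C'$ be the connected component of the smooth surface $\Gamma_{\frac12}\setminus\{0\}$ (smooth by Lemma~\ref{lem-AC}) that contains $p$.

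Since $u_0$ is a cone, $\mathfrak C'$ is the cone over the curve $\gamma:=\mathfrak C'\cap\S^2$, a connected smooth embedded $1$‑manifold without boundary in $\S^2$; by Theorem~\ref{thrm-convex} it has nonnegative geodesic curvature, hence it is a closed convex Jordan curve bounding a geodesically convex region of $\S^2$. In particular $\gamma$ is compact, so $\mathfrak C'$ is relatively closed in $\R^3\setminus\{0\}$, and therefore relatively closed in $\fb{u_0}\setminus\{0\}$. On the other hand $\mathfrak C'$ is relatively open in $\fb{u_0}\setminus\{0\}$: it is a connected component of $\Gamma_{\frac12}\setminus\{0\}$, which is open in $\fb{u_0}\setminus\{0\}$ by Lemma~\ref{lem-op-cls}(ii) and locally connected (being a manifold), so its components are open. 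Because $\mathfrak C\cup\mathfrak C'$ is connected and $\mathfrak C$ is a component, we have $\mathfrak C'\subseteq\mathfrak C$; thus $\mathfrak C'$ is a nonempty subset of the connected set $\mathfrak C$ that is both open and closed in $\mathfrak C$, whence $\mathfrak C=\mathfrak C'\subseteq\Gamma_{\frac12}$. (If $0\in\mathfrak C$ one simply reads the conclusion as: every component of $\fb{u_0}\setminus\{0\}$ meeting $\Gamma_{\frac12}$ is contained in $\Gamma_{\frac12}$, which is exactly what the argument delivers.)

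The one delicate step — and the place where geometry rather than point‑set topology enters — is the claim that $\gamma$ is a closed curve, i.e.\ that $\Gamma_{\frac12}$ does not ``pinch off'' at a $\Gamma_1$‑point; a priori $\gamma$ could be a properly embedded line whose ends accumulate at points of $\overline{\Gamma_{\frac12}}\setminus\Gamma_{\frac12}\subseteq\Gamma_1$. This is ruled out by the convexity of Theorem~\ref{thrm-convex}, since a complete embedded curve of nonnegative geodesic curvature on $\S^2$ is either a great circle or a closed convex curve. As a safeguard I would also give an independent blow‑up argument for this step: if $x_k\in\Gamma_{\frac12}$, $x_k\to x\in\Gamma_1$, then by the half‑plane asymptotics of Lemma~\ref{lem-asym} and the smoothness of Lemma~\ref{lem-AC} the zero set contains a ball $B_{r_k}(\xi_k)\subset\{u_0=0\}$ tangent to $\Gamma_{\frac12}$ at $x_k$; one checks $r_k\to0$ (otherwise a limiting ball lies in $\{u_0=0\}$ tangent to $x$, contradicting $\Theta(x,\{u_0=0\})=0$ since $x\in\Gamma_1$), rescales $u_0$ about $x_k$ by $r_k$ to obtain a nontrivial entire subharmonic $w\ge0$ with $w(0)=0$ vanishing on a unit ball tangent to the origin, uses Corollary~\ref{cor-mon} and Theorem~A to identify every blow‑up of $w$ at $0$ with the half‑plane solution, and then squeezes $S(x_k,\cdot,u_0)$ between $M\,\mathrm{Area}(\S^2)$ and $S(x,0,u_0)$ via Lemma~\ref{lem-semi} to force $w$ homogeneous, hence (again Theorem~A) a half‑plane, which contradicts the vanishing of $r_k$. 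I expect the convexity argument of the previous paragraph to be the cleanest route, with this blow‑up analysis as the main technical fallback.
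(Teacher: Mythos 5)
Your overall plan (show $\Gamma_{\frac12}$ is both open and closed in $\fb{u_0}\setminus\{0\}$, then invoke connectedness) is attractive, and you are right that Lemma~\ref{lem-op-cls} and Spruck's semicontinuity alone do \emph{not} forbid $\Gamma_{\frac12}$ accumulating at a $\Gamma_1$-point. But the closedness step is exactly where your argument has a genuine gap.

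You want $\gamma=\mathfrak C'\cap\S^2$ to be a closed Jordan curve, and you justify this by asserting that ``a complete embedded curve of nonnegative geodesic curvature on $\S^2$ is either a great circle or a closed convex curve.'' Two problems. First, completeness of $\gamma$ is never established: $\Gamma_{\frac12}$ has locally finite $\H^2$-measure (Lemma~\ref{lem-rep}(i)), so $\gamma$ has finite length, and a priori it may be a finite-length open arc whose two ends converge to points of $\Gamma_1\cap\S^2$; such an arc is not complete and the quoted classification simply does not apply. Second, even granting completeness, the classification you invoke is not a standard fact one can cite; for example a curve spiralling asymptotically onto a great circle from one side is complete, embedded, has strictly positive geodesic curvature, and is not closed. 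Nonnegative geodesic curvature alone does not force compactness, and neither does Theorem~\ref{thrm-convex} directly. So the ``delicate step'' you flagged is indeed the whole lemma, and your primary route does not close it.

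The fallback blow-up argument also does not close the gap as written. You rescale by $r_k$ about $x_k$, obtain a limit $w$ that vanishes on a unit ball tangent to the origin, argue via Spruck/Theorem~A that $w$ is a half-plane, and then assert this ``contradicts the vanishing of $r_k$.'' It does not: $w$ being the half-plane solution is entirely consistent with $r_k\to0$ (near each $x_k\in\Gamma_{\frac12}$ the solution genuinely looks like a half-plane at a small enough scale — that is what $x_k\in\Gamma_{\frac12}$ \emph{means}). Also, the proposed ``squeeze'' of $S(x_k,\cdot,u_0)$ between $M\,\mathrm{Area}(\S^2)$ and $S(x,0,u_0)=2M\,\mathrm{Area}(\S^2)$ goes the wrong way for Corollary~\ref{cor-mon} to force homogeneity of $w$: $w$ is obtained from rescalings at moving centers $x_k$ with independently chosen radii $r_k$, and the semicontinuity Lemma~\ref{lem-semi} compares energies at the \emph{same} scale, so it does not pin $S$ to a single value along the sequence. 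No contradiction is derived.

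For comparison, the paper's argument attacks the hypothetical limit point $x_0\in\Gamma_1$ of $\Gamma_{\frac12}\cap\mathfrak C$ directly. Because $\Gamma_{\frac12}$ is contained in the reduced boundary, the open set $\{u_0\equiv0\}^\circ$ propagates up to $x_0$, and its boundary near $x_0$ consists of \emph{two} pieces of $\Gamma_{\frac12}$ meeting the generatrix through $x_0$, both convex by Theorem~\ref{thrm-convex}. They cannot be flat: a flat piece forces, by unique continuation, that $\fb{u_0}$ is a hyperplane and hence has no singularity at $0$. Being strictly convex cones pinching the ray through $x_0$, they bound a wedge of $\{u_0\equiv0\}^\circ$ of positive Lebesgue density at $x_0$ — contradicting $x_0\in\Gamma_1$. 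This is the geometric input your argument is missing; convexity is used not to classify the trace curve $\gamma$ globally, but to produce a local density obstruction at the putative $\Gamma_1$ accumulation point.
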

\begin{proof}
By Lemma \ref{lem-deg} $\mathfrak C$ cannot have degenerate points, thus we have to show that
$\Gamma_{\frac12}$ cannot have limit points in $\Gamma_1$.
Note that $\Gamma_{\frac12}$ is of locally finite perimeter (see Lemma \ref{lem-rep} (i)) and hence locally it is
a countable union of convex surfaces. Let $x_0\in \Gamma_{1}\cap \mathfrak C$ be a limit point of
$\Gamma_{\frac12}\cap \mathfrak C$. The generatrix of the cone $\fb{u_0}$ passing through $x_0$ splits $\mathfrak C$ into
two parts one of which must be convex near $x_0$ because by assumption $x_0$ is a limit point of $\Gamma_{\frac12}$,
see Theorem \ref{thrm-convex}. The set $\{u_0=0\}^\circ$ propagates to $x_0$ because
$\Gamma_{\frac12}$ is a subset of reduced boundary. Thus,  there is another
subset of $\Gamma_{\frac12}$ approaching to $x_0$, and it is a part of the topological boundary of
$\{u_0=0\}^\circ$. Therefore,  the ray passing through $x_0$ is on the boundaries of two
convex pieces  of $\fb{u_0}$ (near $x_0$). Note that if these pieces of
$\Gamma_{\frac12}$ contain flat parts then from the unique
continuation theorem we infer that $\fb{u_0}$ cannot have singularity at $0$.
Thus, they cannot contain flat parts and consequently the density of $\po{u_0}$ at $x_0$
cannot be 1, because
by convexity of $\Gamma_{\frac12}$ it follows that $\{u_0\equiv0\}^\circ$ has
positive density at $x_0$. But  this is in contradiction with the assumption $x_0\in \Gamma_1$.
\end{proof}

Summarizing  we have
\begin{prop}\label{prop-sum}
Let $u_0$ be as above and $N=3$, then $\fb{u_0}\setminus\{0\}$
is a union of smooth convex cones.
\end{prop}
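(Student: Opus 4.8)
The plan is to prove this proposition by assembling the structural facts already established in this section, since the statement is essentially a summary. First I would record that $u_0$ is homogeneous of degree one by Corollary \ref{cor-mon}, so $\fb{u_0}$ is a cone and it suffices to analyze each connected component $\mathfrak C$ of $\fb{u_0}\setminus\{0\}$ separately. By Lemma \ref{lem-deg} every point of $\mathfrak C$ is non-degenerate, hence by Lemma \ref{lem-op-cls} we have $\mathfrak C\subset\Gamma_{\frac12}\cup\Gamma_1$; since $\Gamma_1$ has no limit points in $\Gamma_{\frac12}$, connectedness of $\mathfrak C$ forces the dichotomy $\mathfrak C\cap\Gamma_{\frac12}\neq\emptyset$ or $\mathfrak C\subset\Gamma_1$.

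If $\mathfrak C$ meets $\Gamma_{\frac12}$, then Lemma \ref{lem-cmpt} gives $\mathfrak C\subset\Gamma_{\frac12}$, Lemma \ref{lem-AC} shows $\mathfrak C$ is smooth, and Theorem \ref{thrm-convex} shows $\mathfrak C$ is a cone of non-positive outward mean curvature, i.e. a smooth convex cone. The remaining case — a component contained entirely in $\Gamma_1$ — is the one I expect to be the main obstacle, and I would dispose of it by showing such a component must be flat. Pick $x_0\in\mathfrak C\setminus\{0\}$ and run the dimension-reduction blow-up of $u_0$ at $x_0$ exactly as in Lemma \ref{lem-asym} (using Proposition \ref{prop-2nd-blow} and Corollary \ref{cor-mon}); by Theorem A and $u_0\ge0$ the resulting two-variable homogeneous blow-up $u_{00}$ is a half-plane or a wedge, and the half-plane is excluded because $\Theta(x_0,\po{u_0})=1\neq\frac12$, so $u_{00}$ is a wedge. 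A short computation of Spruck's energy then gives $S(0,x_0,u_0)=S(1,0,u_{00})=2M\,\H^2(\S^2)=8\pi M$, independently of the wedge's opening; letting $\mathfrak C\ni x_k\to0$ and invoking the semicontinuity Lemma \ref{lem-semi} yields $8\pi M\le S(0,0,u_0)$.

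On the other hand, writing $u_0=rg$, homogeneity of degree one and Corollary \ref{cor-mon} make $S(\cdot,0,u_0)$ constant, equal to $2M\,\H^2(\S^2\cap\po{u_0})+\int_{\S^2}\big(|\na_\sigma g|^2-2g^2\big)$; since $\H^2(\S^2)=4\pi$ and, integrating by parts on the closed manifold $\S^2$, $\int_{\S^2}(|\na_\sigma g|^2-2g^2)=-\int_{\S^2}g(\lap_{\S^2}g+2g)\le0$ (because $g\ge0$ and $\lap_{\S^2}g+2g=r\lap u_0\ge0$ by subharmonicity of $u_0$), we obtain $S(0,0,u_0)\le8\pi M$. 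Hence equality holds throughout, which forces $\H^2(\S^2\cap\{u_0=0\})=0$; in particular $\{u_0=0\}$ has empty interior, so $\Gamma_{\frac12}=\emptyset$ and $\fb{u_0}\setminus\{0\}=\Gamma_1\setminus\{0\}$. Finally, writing $\S^2\setminus\{g=0\}$ as a disjoint union of open sets $U_i$, on each $U_i$ the positive function $g$ is a first Dirichlet eigenfunction with eigenvalue $2=\lambda_1(\text{hemisphere})$, so the Faber--Krahn inequality on $\S^2$ together with $\sum_i|U_i|=4\pi$ forces exactly two such $U_i$, each a hemisphere; thus $\{g=0\}\cap\S^2$ is a great circle and $u_0=\alpha|x_1|$ is a wedge with flat free boundary — a (degenerate) smooth convex cone. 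Taking the union over all components completes the proof, the genuinely nontrivial input being the monotonicity-plus-Faber--Krahn argument that rules out non-flat components inside $\Gamma_1$.
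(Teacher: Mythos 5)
Your proof is correct in structure and considerably more explicit than the paper's, which simply states ``We summarize our analysis above'' and supplies no argument of its own. Indeed, the lemmas that the paper summarizes (Lemmas \ref{lem-cmpt}, \ref{lem-AC} and Theorem \ref{thrm-convex}) only cover components of $\fb{u_0}$ that meet $\Gamma_{\frac12}$, matching the hypothesis of Theorem~B, and your first paragraph correctly reassembles them for that case. The case of a component contained entirely in $\Gamma_1$ is a genuine loose end that the paper does not address, and your Spruck-energy rigidity argument for it is new and sound: the blow-up at any such point is a wedge by Theorem~A and density one, its Spruck energy equals $8\pi M$, Lemma \ref{lem-semi} forces $S(0,0,u_0)\ge 8\pi M$, and the identity $S(0,0,u_0)=2M\,\H^2(\{g>0\}\cap\S^2)$ (the quadratic term vanishes exactly, since the measure $\lap_{\S^2}g+2g$ is concentrated on $\{g=0\}$ where $g$ vanishes) then forces $\{g=0\}\cap\S^2$ to have measure zero, hence $\Gamma_{\frac12}=\emptyset$.

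The only real gap is in the final Faber--Krahn step. Faber--Krahn gives $|U_i|\ge 2\pi$ for each nodal domain, and combined with $\sum_i|U_i|=4\pi$ this leaves \emph{two} possibilities: two hemispheres, or a single component $U_1$ with $|U_1|=4\pi$. Faber--Krahn by itself does not exclude the second alternative --- the comparison ball for $|U_1|=4\pi$ is the whole sphere, whose first Dirichlet eigenvalue is $0$, so the inequality $\lambda_1(U_1)\ge\lambda_1(B^*)$ is vacuous there. To rule out the single-component case you need a topological input: since every free-boundary point is non-degenerate (Lemma \ref{lem-deg}) and belongs to $\Gamma_1$ with a wedge blow-up, $\fb{u_0}\setminus\{0\}$ is a smooth two-dimensional cone without boundary, so $\fb{u_0}\cap\S^2=\{g=0\}\cap\S^2$ is a finite union of smooth simple closed curves, each of which separates $\S^2$; hence $\{g>0\}\cap\S^2$ has at least two components. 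Together with the Faber--Krahn count $\le 2$, and the Faber--Krahn rigidity at equality, one then gets exactly two hemispheres and the great-circle/wedge conclusion you want. Once that is inserted, the argument is complete and in fact furnishes a proof the paper itself omits.
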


\subsection{Proof of Theorem B}
The first part of Theorem B follows from Lemma \ref{lem-deg} while the second part
is a corollary of Lemma \ref{lem-cmpt} since  $\Gamma_{\frac12}$ coincides with the reduced boundary. Finally, the last part follows from Lemma \ref{lem-AC}, because by Lemma \ref{lem-cmpt}
the reduced boundary propagates instantaneously in  $\fb{u_0}$.

%-----------------%%%%
%            Minimal immersions
%-----------------
\section{Proof of Theorem C}
\subsection{Inverse Gauss map and the support function}
Suppose $ N=3$ and $u=rg(\theta, \phi),$ where 
\[x=r(\sin\theta\cos\phi, \sin\theta\sin\phi, \cos\theta)\]
then 
\[\lap u_0=\frac1r\left(g_{\theta\theta}+\frac{\cos\theta}{\sin\theta}g_\theta+\frac{g_{\phi\phi}}{\sin^2\theta}+2g\right).\]

Note that 
\[\lap_{\S^2}g=g_{\theta\theta}+\frac{\cos\theta}{\sin\theta}g_\theta+\frac{g_{\phi\phi}}{\sin^2\theta}\]
is the Laplace-Beltrami operator.
Thus we get
\begin{equation}\label{mean0}
\lap_{\S^2}g+2g=0.
\end{equation}
Let $H(n), n\in \S^{N-1}$ be the Minkowski support function of some hypersurface 
$\M$.
$H(n)$ is the distance between  the point on $\M$ with normal $n$ 
and the origin. 
It is known \cite{Aleksandrov} that the eigenvalues of the matrix 
\[\na^2_{ij} H(n)+\delta_{ij}H(n)\]
are the principal radii of curvature of the surface determined by $H$, where  the second order derivatives are taken with respect to an orthonormal frame at $n\in \S^{N-1}$. The support function uses the inverse of the Gauss map
to parametrize the surface as follows
\[H(n)=G^{-1}(n)\cdot n.\]
Furthermore, we have the following formula for the Gauss curvature $K$  \cite{Aleksandrov}
\begin{equation}\label{Gauss-det}
\frac1K=\det(\na^2_{ij} H(n)+\delta_{ij}H(n)).
\end{equation}
The Gauss map is a local diffeomorphism  whenever $K\not=0$ \cite{Rosenberg}. Since
$u_0=rg$ is harmonic in $\po{u_0}$ we infer that $g$ is smooth on $\S^2\cap \po{g}$.

%-------------------------------------
%    SUBSECTION  catenoid
%-------------------------------------
\subsection{Catenoid is a solution}
In \cite{AC}  Alt and Caffarelli constructed a weak solution which is not a minimizer.
Their solution can be given explicitly as follows: let
\[x=r(\sin\theta\cos\phi, \sin\theta\sin\phi, \cos\theta)\]
and take
\[u(x)=r\max\left(\frac{f(\theta)}{f'(\theta_0)}, 0\right)\]
where
\[f(\theta)=2+\cos\theta\log\left(\frac{1-\cos\theta}{1+\cos\theta}\right)=2+\cos\theta\log\left(\tan^2\frac\theta2\right)\]
and $\theta_0$ is the unique zero of $f$ between 0 and $\frac\pi2$.
The aim of this section is to show that $f$ is the support function of some catenoid.
Recall that the principal radii of curvature of a smooth surface are the
eigenvalues of the matrix $\D_{\S^{N-1}}^2H+\delta_{ij}H$ where
the Hessian is taken with respect to the sphere $\S^{N-1}$ \cite{Aleksandrov}. At each point
where the Gauss curvature does not vanish the zero mean curvature condition for $N=3$ can be written as
\[\triangle_{\S^2}H+2H=0\]
where $\triangle_{\S^{2}}$ is the Laplace-Beltrami operator and
$H(n)$ is the value of Minkowski's support function corresponding to the normal $n\in \S^2$.
Recall that, in $x, y$ coordinates, 
by rotating the graph of $y(x)=a\cosh \frac xa, a=const.$ around the $x-$axis one obtains a catenoid. 
Thus it is enough to compute the support function
for the graph of $y$. Let $\alpha$ be the angle  that the
tangent line of $y$ at $(x, y(x))$ forms with the $x-$axis. If $n$ is the unit normal to the graph of
$y$ then $n=(-\sin\alpha, \cos\alpha)$ and
\[H(n)=(x, y(x))\cdot n=-x\sin\alpha+a\cos\alpha \cosh\frac xa.\]
Noting that the unit tangent at $(x,y(x))$ is $(\cos\alpha, \sin \alpha)$ and equating with 
the slope of tangent line, which is $(\sinh \frac xa, -1)$,
we obtain
\[\cos\alpha=\frac{\sinh\frac xa}{\sqrt{1+\sinh^2\frac xa}}, \quad \sin\alpha =-\frac{1}{\sqrt{1+\sinh^2\frac xa}}.\]

From the second equation we get that $\sinh\frac xa=\tan\alpha$ and solving the
quadratic equation $e^{2\frac xa}-1=2e^{\frac xa}\tan\alpha$ we find that
\[x=a\log\frac{1+\sin\alpha}{\cos\alpha}, \quad \cosh\frac xa=\frac1{\cos\alpha}.\]
Consequently,
\[H(n)=-\frac a2\sin\alpha\log\left(\frac{1+\sin\alpha}{\cos\alpha}\right)^2+a.\]
Taking $\alpha=\theta+\frac\pi2$ we have
\[\frac{1+\sin\alpha}{\cos\alpha}=\frac{1+\cos\theta}{-\sin\theta}=\frac{2\cos^2\frac\theta2}{-2\sin\frac\theta2\cos\frac\theta 2}=-\cot\frac\theta2\]
and the result follows if we choose $a=2$.

%-------------

\subsection{Almost minimal immersions}\label{subsec-immersion}
Consider the parametrization $\X:U_g\to \R^3$, where
\begin{equation}\label{X-par}
\X(n)=ng(n)+\nabla_{\S^2}g, \quad U_g=\po g\subset \S^2.	
\end{equation}
Let $\M$ be the hypersurface determined by $\X$.
The spherical part $g$ of $u_0$ solves the equation \eqref{mean0}
and by Theorem 1 \cite{Rez} $\X$ determines a smooth map which is either constant
or a conformal minimal immersion outside locally finite set of isolated singularities (branch points).
Recall that
if at some point $p$
\begin{equation}
\X_{\xi_1}\times\X_{\xi_2}=0, \X=\X(\xi_1, \xi_2 )\ \text{in local coordinates}\ \xi_1, \xi_2, 
\end{equation}
then  $p$ is  called branch point, see  \cite{N-book} page 314.

Observe that $\X(n)$ is the gradient of the blow-up $u_0$ at $n=\frac{x}{|x|}$. Indeed,
\begin{eqnarray}\label{X-grad}
\X(n)&=&\frac n{r}rg+\frac1r\nabla_{\S^2}(rg)=\\\nonumber
&=&\frac n{r}u_0(x)+\frac1r\nabla_{\S^2}u_0=\\\nonumber
&=& \frac n{r}(\nabla u_0(x)\cdot x)+\frac1r\nabla_{\S^2}u_0=\\\nonumber
&=&n(\nabla u_0(x)\cdot\frac{x}{|x|})+\frac1r\nabla_{\S^2}u_0=\\\nonumber
&=&\nabla u_0(x).
\end{eqnarray}

In particular, the computation above shows that

\begin{equation}\label{grad-hom}
\nabla u_0(x)=\nabla u_0(\frac x{|x|}), \quad \nabla_{\S^2}g(n)\perp n,
\end{equation}
in other words the gradient is homogeneous of degree zero.

The absence of branch points does not rule out the possibility of self-intersection. Therefore we need to prove that under conditions of Theorem C $\M$ is embedded.

\subsection{Dual cones and center of mass}
If $u_0$ is a blow-up and the assumptions in  Theorem C  are satisfied, 
then by virtue of Proposition \ref{prop-sum}  the free boundary  $\fb{u_0}\setminus \{0\}$
is a union  of smooth convex cones $\C_1$ and $\C_2$.
We define the dual cones as follows
\begin{equation}\label{vigenik-dual}
\C^\star_i=\p \{y\in \R^3 : x\cdot y\le 0, x\in \C_i\}, \quad i=1,2.
\end{equation}
It is well-known that the dual of a convex cone is also convex, \cite {Schneider} page 35.

\begin{lem}\label{lem-pos-con}
The largest principal curvature of $\C_i\setminus\{0\}$ is strictly positive.
\end{lem}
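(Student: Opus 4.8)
The plan is to argue by contradiction: suppose that for some $i$ the largest principal curvature of $\C_i\setminus\{0\}$ vanishes identically. Since $\C_i$ is a cone, one of its principal curvatures is automatically zero (the one corresponding to the radial generatrix direction), so the hypothesis that the \emph{largest} principal curvature is zero forces $\C_i\setminus\{0\}$ to be totally geodesic in $\R^3$, hence a piece of a plane through the origin. But a planar piece of $\fb{u_0}$ means that $u_0$ is, locally near such a generatrix, the half-plane solution $\sqrt{2M}x_1^+$ (using the asymptotic expansion \eqref{asym-exp} of Lemma \ref{lem-asym} together with Lemma \ref{lem-AC}, which gives that $|\na u_0|=\sqrt{2M}$ holds classically on $\Gamma_{\frac12}$). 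First I would make this reduction precise: a flat component of $\fb{u_0}$ produces an open set where $u_0$ agrees with a linear function, and then the unique continuation theorem (cf.\ the use of Theorem 2.1 \cite{Han} in the proof of Lemma \ref{lem-min-immer}) propagates this flatness, contradicting the assumption that $\fb{u_0}$ has a genuine singularity at $0$ — this is exactly the mechanism already invoked in the proof of Lemma \ref{lem-cmpt}.

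More carefully, I would proceed as follows. By Proposition \ref{prop-sum} and Theorem \ref{thrm-convex}, $\C_i\setminus\{0\}$ is a smooth convex cone, so its two principal radii of curvature are $\infty$ (radial direction) and some $R(\sigma)\in(0,\infty]$ along the spherical cross-section $\gamma = \S^2\cap\C_i$, which is a convex curve. The largest principal curvature is $1/R(\sigma)$, and the claim "$1/R > 0$" is the assertion that $R$ stays bounded, i.e.\ the convex spherical curve $\gamma$ is nowhere geodesic (nowhere a great-circle arc). So the contradiction hypothesis is: there is a relatively open arc $J\subset\gamma$ that is a great-circle arc. Lifting back to the cone, $\C_i$ contains an open planar sector $P$. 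Then I would use the representation formula from Lemma \ref{lem-rep}(ii), $\lap u_0 = \sqrt{2M}\,\H^2\with\Gamma_{\frac12}$ away from $\Gamma_1$, together with the classical free boundary condition on the smooth piece $\Gamma_{\frac12}$ (Lemma \ref{lem-AC}), to conclude that near $P$ the function $u_0$ solves the Bernoulli problem with flat free boundary; by Schwarz reflection / the Cauchy–Kovalevskaya-type uniqueness for the one-phase problem at a flat free boundary point, $u_0$ must coincide with the half-plane solution $\sqrt{2M}(x\cdot\nu)^+$ in a full neighbourhood of an interior point of $P$. Hence $u_0$ is linear on a nonempty open subset of $\po{u_0}$.

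Finally, since $u_0$ is harmonic in $\po{u_0}$ and $\po{u_0}\setminus\{0\}$ is connected when we restrict to the component bounded in part by $\C_i$ (the cone structure plus Lemma \ref{lem-op-cls} give this), unique continuation for harmonic functions forces $u_0$ to be linear throughout that whole component, so the entire bounding cone $\C_i$ is a single plane through $0$. But then $\fb{u_0}$ has no singularity contributed by $\C_i$, and combined with the ring-type hypothesis of Theorem C (two cones $\C_1,\C_2$ with a genuine singular set at $0$) this is a contradiction, exactly as in the last paragraph of the proof of Lemma \ref{lem-cmpt}. Therefore $1/R(\sigma)>0$ everywhere, which is the assertion. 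I expect the main obstacle to be the rigorous justification of the ``flat free boundary $\Rightarrow$ locally the half-plane solution'' step: one must be sure the weak-solution structure near $\Gamma_{\frac12}$ (Definition 5.1 \cite{AC}) upgrades, via the smoothness in Lemma \ref{lem-AC} and the classical overdetermined conditions $u_0=0,\ |\na u_0|=\sqrt{2M}$ on a flat piece, to genuine real-analyticity and hence to reflection-type uniqueness — this is where the argument is most delicate, but it is the same circle of ideas used for the unique continuation step in Lemma \ref{lem-cmpt}.
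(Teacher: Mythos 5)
Your proof is built on the wrong contradiction hypothesis, and this is a genuine gap. The lemma asserts that the largest principal curvature $\kappa(p)$ is strictly positive \emph{at every} $p\in\C_i\setminus\{0\}$; its negation is merely that $\kappa(p_0)=0$ for \emph{some} $p_0$ (equivalently, by homogeneity, along a single generatrix). You instead assume $\kappa$ vanishes on a whole open arc of $\gamma=\S^2\cap\C_i$, i.e.\ that $\C_i$ contains a genuinely flat two-dimensional sector, and the entire rest of your argument (Cauchy--Kovalevskaya rigidity for the overdetermined Bernoulli problem on a flat piece, then harmonic unique continuation) hinges on this. But a smooth --- even real-analytic --- convex curve can have its geodesic curvature vanish at an isolated point without containing any great-circle arc (think of a curvature function behaving like $s^{2k}$, $k\ge1$, near $s=0$), so ``$\kappa(p_0)=0$'' does not upgrade to ``flat arc'' and your reduction does not go through.

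The paper's proof handles the pointwise case directly, and this is exactly the content you are missing. Fixing $p$ with $\kappa(p)=0$ and choosing coordinates with $e_1$ the outward normal at $p$, the paper deduces that the \emph{entire Hessian} $D^2u_0$ vanishes at $p$ by combining four independent facts: the vanishing mean curvature at $p$ gives $\p_{11}u_0=0$; degree-one homogeneity (zero-degree homogeneity of $\na u_0$ along the generatrix) kills the entries involving the radial direction; harmonicity then forces the remaining diagonal entry to vanish; and differentiating the classical free boundary condition $|\na u_0|^2=2M$ along the principal direction on $\C_1$ forces $\p_{12}u_0=0$. Since $u_0$ is homogeneous, $D^2u_0$ then vanishes along the full ray through $p$, which is a $1$-dimensional set, and \emph{that} is the hypothesis under which Han's Theorem 2.1 and Lemma 2.2 (applied to $w=u_0-x_1$) give $u_0\equiv x_1$, contradicting the singularity at $0$. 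So although both you and the paper end with Han-type unique continuation and the no-flat-piece contradiction of Lemma \ref{lem-cmpt}, the crucial intermediate computation --- ``$\kappa(p)=0$ at a single point $\Rightarrow$ $D^2u_0=0$ on the generatrix through $p$'' --- is what makes the argument work from the correct negation, and it is absent from your proposal.
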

\begin{proof}
To fix the ideas we prove the statement for $\C_1$.
Note that one of the principal curvatures of $\C_1\setminus\{0\}$ is zero
because $\C_1$ is a cone and $\C_1\setminus\{0\}$ is smooth, see Theorem B.
Let $\kappa(p)$ be the largest principal curvature at $p\in \C_1\setminus\{0\}$.
Suppose there is $p$ such that $\kappa(p)=0.$ 
Choose  the coordinate system at $p$
so that $x_1$ points in the outward normal direction at $p$ (into $\{u_0\equiv 0\}$),
$x_2$ axis  is tangential at $p$ and is the principal direction corresponding to $\kappa(p)$.
Then we have that $\nabla u_0(p)=e_1$, the unit direction of $x_1$ axis
and the mean curvature of $\C_1$ at $p$ vanishes because we assumed that $\kappa(p)=0.$
Writing the mean curvature at $p$ in terms of the derivatives of $u_0$ we have
\[0=\frac{\nabla u_0 D^2 u_0(\nabla u_0)^T-|\nabla u_0|^2\lap u_0}{|\nabla u_0|^3}=\frac{\p_{11}u_0}{\sqrt{2M}}\]
implying that $\p_{11}u_0=0$. Moreover, since $u_0$ is homogeneous of degree one then
$\nabla u_0=e_1$ along the $x_1$ axis. This yields $\p_{13}u_0=\p_{23}u_0=\p_{33}u_0=0$
along the $x_1$ axis. From the harmonicity of $u_0$ 
 it follows that  $\p_{22}u_0=0$
along the $x_3$ axis.
Summarizing, we have that along the points of the $x_3$ axis the Hessian of $u_0$
has the following form
\[\begin{pmatrix}
  0 & \p_{12} u_0 &0\\
  \p_{12}u_0 & 0 &0\\
  0 &0 &0
\end{pmatrix}.\]
Finally, letting $\sigma(t), t\in(-\delta, \delta)$ be the parametrization of the curve
along which the $x_1x_2$ plane intersects with
$\C_1$ and differentiating $|\nabla u_0(\sigma(t))|=1$ in $t$ we get that at $p$
one must have
\[0=e_1\begin{pmatrix}
  0 & \p_{12}u_0 &0\\
  \p_{12}u_0 & 0 &0\\
  0 &0 &0
\end{pmatrix}e_2=\p_{12}u_0(p).\]
Thus, the Hessian $D^2 u_0$ vanishes along the $x_1$ axis. The function $w=\sqrt{2M}-\p_1 u_0$ is harmonic in $\po {u_0}$ 
and $w\ge 0$ thanks to Lemma \ref{lem-limsup}. Moreover, $w(e_1)=0=\min w$. Since at $e_1$ the free boundary is regular then by  Hopf's lemma   $\p_1w=-\p_{11}u_0\not =0$. 
However $D^2 u_0(te_1)=0$ for every $t>0$ and hence 
$\p_{11}w(e_1)=0$ which is a contradiction.
\end{proof}

\begin{remark}\label{rem-curv}
  It follows from  Lemma \ref{lem-pos-con} and Theorem B that there are two
  positive constants $\kappa_0,\kappa_1$ such that
  \[
  0<\kappa_0\le \kappa(p)\le \kappa_1, \quad p\in (\fb{u_0}\setminus\{0\})\cap \p B_{\sqrt{2M}},
  \]
  where $\kappa(p)$ is the largest curvature of $\fb {u_0}$ at $p\in (\fb{u_0}\setminus\{0\})\cap \p B_{\sqrt{2M}}$.
\end{remark}

Let us put  $\gamma_i=\S^2\cap \C_i^\star$.

\begin{lem}\label{lem-cone}
Let  $\C_1^\star, \C_2^\star$ be the dual cones \eqref{vigenik-dual}. Then we have
\begin{itemize}
\item[(i)] $\p \M$ is differentiable and there are two positive constants $\kappa_0^\star, \kappa_1^\star$
such that the largest curvature $\kappa^\star(p)$ of $(\C_i^\star\setminus \{0\})\cap \S^2$
satisfies $\kappa_0^\star\le \kappa^\star(p)\le \kappa_1^\star$,
\item[(ii)] there is $\delta>0$ small such that every component $E_\delta $ of $\p B_{1-\delta}\cap \M$ defines a convex cone $K_\delta=\{\sigma t : \sigma\in E_\delta, t>0\}$,
    \item[(iii)] $\M$ is star-shaped with respect to the origin and hence embedded.
\end{itemize}

\end{lem}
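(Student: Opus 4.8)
The plan is to exploit the duality between the convex cones $\C_i$ (whose geometry is controlled by Remark 5.13, in particular uniform two-sided curvature bounds $0<\kappa_0\le\kappa(p)\le\kappa_1$) and their polars $\C_i^\star$. For part (i), I would first recall that $\gamma_i=\S^2\cap\C_i^\star$ is the spherical polar of the convex spherical curve $\S^2\cap\C_i$, and that the parametrization $\X$ of $\M$ in \eqref{X-par} sends a point $n$ to $\nabla u_0(n)$, which lies in the halfspace $\{x\cdot n\ge 0\}$ and vanishes (together with its normal behaviour encoded by \eqref{asym-exp}) precisely over $\S^2\cap\C_i$. Hence $\partial\M$ is carried by the dual curves $\gamma_i$. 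The key computation is the classical fact that polarity exchanges the two principal radii of curvature: if a smooth convex curve on $\S^2$ has geodesic curvature $\kappa_g$, its polar has geodesic curvature $1/\kappa_g$ (equivalently, the support functions satisfy $H_{\C_i^\star}= $ the "inverse radius" of $\C_i$). Feeding in Remark 5.13 gives $1/\kappa_1\le\kappa^\star(p)\le 1/\kappa_0$, so one sets $\kappa_0^\star=1/\kappa_1$, $\kappa_1^\star=1/\kappa_0$; differentiability of $\partial\M$ follows from smoothness of $\C_i\setminus\{0\}$ (Theorem B) together with the fact that $\X$ extends $C^1$ up to $\partial U_g$ by the asymptotic expansion \eqref{asym-exp}.

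For part (ii), I would argue that for $\delta>0$ small the section $\partial B_{1-\delta}\cap\M$ is a graph over a spherical cap whose boundary lies in a $\delta$-neighbourhood of the dual curves $\gamma_i$; since the $\gamma_i$ are uniformly convex on $\S^2$ by (i), and since $\M$ is a small $C^1$ perturbation of the cone over $\gamma_1\cup\gamma_2$ near the sphere of radius $\sqrt{2M}$ (this is where \eqref{grad-hom}, i.e. homogeneity of degree zero of $\nabla u_0$, is used: $\M$ is itself a cone, so $\partial B_{1-\delta}\cap\M$ is a rescaled copy of $\partial B_1\cap\M=\X(\S^2)$), the section bounds a convex spherical region. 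Concretely: $\M$ is a cone over the curve $\Sigma=\X(\partial U_g)\subset\partial B_{\sqrt{2M}}$, and $\Sigma$ is the polar image; the uniform curvature bound of (i) makes $\Sigma$ a convex curve on its sphere, hence the cone it spans is convex. This is essentially a restatement of (i) in the language the next subsection needs.

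For part (iii), once $\M$ is known to be a cone (homogeneity degree zero of $\nabla u_0$) whose link $\Sigma$ is a convex closed curve on the sphere of radius $\sqrt{2M}$, star-shapedness with respect to the origin is immediate: every ray from $0$ meets $\M$ in at most one point because $\M=\{t\,\X(n): n\in\overline{U_g},\ t\ge 0\}$ and, by the conformal immersion property (Lemma 5.10, no branch points) together with the convexity of the link, distinct rays of the cone correspond to distinct directions $\X(n)/|\X(n)|$. A star-shaped surface is automatically embedded, which gives the final claim. The main obstacle I expect is \textbf{part (i)}, specifically making the polar-curvature exchange rigorous at the level needed — one must be careful that $\C_i\setminus\{0\}$ is only $C^\infty$ away from the vertex and that the curvature bounds of Remark 5.13 transfer cleanly to $\gamma_i$ without degeneration as one approaches the (single, by Theorem B) singular ray; this is handled by noting that $\gamma_1,\gamma_2$ are compact away from that ray and invoking the uniform bounds of Remark 5.13, which already account for the singular point. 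Parts (ii) and (iii) are then essentially bookkeeping on top of (i) and the cone structure.
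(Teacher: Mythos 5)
Your treatment of part (i) is broadly in the right spirit (polarity exchanges strict convexity and curvature bounds, so Remark~\ref{rem-curv} transfers to $\C_i^\star$), although the paper establishes differentiability of $\C_i^\star$ by a contrapositive argument — a corner of $\C_i^\star$ would force a flat piece of $\C_i$, which unique continuation rules out — rather than by asserting $C^1$ extension of $\X$ to $\partial U_g$; both routes ultimately lean on Lemma~\ref{lem-pos-con} for the strict convexity that makes the duality nondegenerate.

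Parts (ii) and (iii) rest on a false premise, and this is a genuine gap. You write that the degree-zero homogeneity \eqref{grad-hom} makes ``$\M$ itself a cone, so $\partial B_{1-\delta}\cap\M$ is a rescaled copy of $\partial B_1\cap\M$,'' and then in (iii) you parametrize $\M$ as $\{t\,\X(n): n\in\overline{U_g},\ t\ge 0\}$. But $\nabla u_0$ being homogeneous of degree zero means it is \emph{constant} along rays from the origin; hence $\X$ depends only on $n\in\S^2$, and $\M=\X(U_g)$ is a two-dimensional surface, not the three-dimensional set $\{t\,\X(n)\}$. The intersections $\partial B_{1-\delta}\cap\M$ are honest level curves of $|\X|$ inside $\M$, not dilates of one another. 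A quick sanity check: the Alt--Caffarelli blow-up corresponds to a piece of catenoid, which is certainly not a cone, and the only minimal cones in $\R^3$ are planes — if $\M$ were a cone, Theorem~C would be vacuous. Consequently the ``convex link'' mechanism you propose for (ii) and the ``one ray, one point of the cone'' mechanism for (iii) both collapse.

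The paper's actual argument for (ii) and (iii) does not use any cone structure of $\M$. For (ii), minimality gives $k_1+k_2=0$ hence $K=-k_1^2$, and since $\X$ is an immersion with smooth support function $g$ on $U_g$, \eqref{Gauss-det} forces $K\ne 0$ with a uniform lower bound $k_i^2\ge c_0$; combined with the tangency of $\M$ to the strictly convex cones $\C_i^\star$ along $\gamma_i$, this shows that $\partial B_{1-\delta}\cap\M$ generates a convex cone for $\delta$ small. For (iii), one first shows $|\X(n)|>0$ on $\overline{U_g}$ (on $\partial U_g$ it equals $\sqrt{2M}$, and if $\X(n)=0$ for some $n\in U_g$ then, since $n\perp\nabla_{\S^2}g$, one would get $g(n)=0$, contradicting $n\in\{g>0\}$); then $\X(n)\cdot n=g(n)>0$ for $n\in U_g$ says the position vector has a strictly positive component along the unit normal $n$ of $\M$ at $\X(n)$, which is precisely star-shapedness with respect to the origin, and star-shapedness gives embeddedness. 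You will want to replace your (ii)--(iii) with an argument of this kind, or at minimum drop the claim that $\M$ is a cone.
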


\begin{proof}
Suppose that $\C_1^\star$ is not differentiable at some $z\not =0$.
Then $\C_i$ must have a flat piece. Indeed, if $n_1, n_2$ are the normals of two  supporting hyperplanes of $\C_i^\star$ at $z$ then the
unit vectors $n_t=\frac{tn_1+(1-t)n_2}{|tn_1+(1-t)n_2|},$
define a support function at $z$ for every  $t\in(0,1).$
Since the vectors $n_t$ lie on the same plane then $\C_1$ must have
a flat piece. The unique continuation theorem implies that the free boundary is a hyperplane
and cannot have singularities. Now the desired estimate follows from Remark
\ref{rem-curv} and the definition of dual cone. The first claim is proved.

Let $k_1, k_2$ be the principal curvatures of $\M$, then $k_1+k_2=0$ and the Gauss curvature
is $K=-k_1^2=-k_2^2$. Since $\M$ is a smooth  immersion then from \eqref{Gauss-det}  and the smoothness of
$\X=\nabla u_0$ in $U_g$ we see that $K\not=0$. Furthermore, there is a tame constant
$c_0>0$ such that $k_i^2\ge c_0, i=1, 2$ at every point of $\M$.
Thus by virtue of the part (i) $\M$ is fibred by $\p B_{1-\delta}$ for  $\delta>0$ small. We claim that
$|\X(n)|>0, n\in\overline{U_g}$. Clearly this is true if $n\in \p U_g$ where
$|\X(n)|=1$. Suppose there is $n\in U_g$ such that $\X(n)=0$. Since
$\X(n)=ng+\nabla_{\S^2}g$ it follows that $g(n)=0$, but this is impossible since $n\in\{g>0\}=U_g.$
From  $g(n)=\X(n)\cdot n>0, n\in U_g$ it follows that $\M$ is starshaped with respect to the origin. Consequently, $\M$ is fibered by $\p B_t$ for every $t\in(0, 1)$ and hence embedded.
\end{proof}

%\subsection{Capillary surface in the sphere}
Let $n\in U_g$ then  from $\X(n)=\na u_0(n)$ it follows that
\[|\X(n)|_{\fb{u_0}}|=%|g(n)n+\nabla g|=
|\nabla u_0 |_{\fb{u_0}}|=\sqrt{2M}.\]
Since by  Lemma \ref{lem-cone} $\M$ is differentiable along $\gamma_i$ we see that the contact angle $\alpha$ between $\M$ and $\S^2$ is
\[\cos \alpha= n \cdot \frac{\X(n)|_{\fb{u_0}}}{\sqrt{2M}}=g(n)|_{\fb{u_0}}=0.\]
Thus, the minimal surface defined by $g$ is inside of the sphere of radius $\sqrt{2M}$ because 
in view of Lemma \ref{lem-limsup} $|\nabla u_0|^2=g^2+|\na g|^2\le 2M$.
Moreover,
$\M$ is tangential to $\C_1^\star$ and $\C_2^\star$ along $\S^2$ since $n\perp \nabla_{\S^2}g$ by \eqref{grad-hom}.

\medskip
We recall the definition  of topological type $[\e, r, \chi]$ 
of hypersurface $\M\subset \R^3$ from \cite{Nitsche} page 47.
\begin{defn}
We say that $\M$ is of topological type $[\epsilon, r, \chi]$ if it has orientation $\e$, Euler characteristic
$\chi$, and  $r$ boundary curves. Here $\epsilon=\pm1$, where $+1$ means that $\M$ is orientable
and $\epsilon=-1$ non-orientable. For orientable surface the Euler characteristic
is defined by the relation $\chi=2-2g-r$ where $g$ is the genus of $\M$.
\end{defn}

 Now the first part of Theorem C follows from Nitsche's theorem, see page 2 \cite{Nitsche}.
Moreover, the only stationary surfaces of  disk type are the totally geodesic
disks and the spherical cups. From  Lemma \ref{lem-cap} it follows that
if $u_0=rg$ and $\supp g$ is  a disk then $u_0$ is a half plane.

In view of Lemma \ref{lem-cone} (iii) the proof of Theorem C can be deduced from the result of Nitsche \cite{Nit-caten} but 
we will sketch a shorter proof based on Aleksandrov's  moving plane method and 
Serrin's boundary lemma.  We reformulate  Theorem C as follows
\begin{lem}
Let $\M$ be of topological type $[1, 2, 0]$, i.e. a ring-type minimal surface.
Then $\M$ is a part of catenoid.
\end{lem}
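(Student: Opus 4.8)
The plan is to prove that a ring-type (topological type $[1,2,0]$) minimal surface $\M$ arising as in Theorem C must be a piece of catenoid, using the Aleksandrov moving-plane method adapted to minimal surfaces with free boundary on the sphere. The geometric setup is already well-prepared: by Lemma \ref{lem-cone} the surface $\M$ is embedded, star-shaped with respect to the origin, meets $\S^2_{\sqrt{2M}}$ orthogonally along its two boundary curves $\gamma_1,\gamma_2$, and is swept out by the cross-sections $\p B_\rho\cap\M$ which are convex curves (Theorem \ref{thrm-convex} and Lemma \ref{lem-cone}(ii)). Since $u_0=rg$ is a cone, $\M=\na u_0(\S^2)$ is itself a cone over a pair of convex spherical curves; the content is to show this cone is rotationally symmetric about an axis, which is exactly the defining property of the catenoid among ring-type minimal surfaces.

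First I would set up the moving planes. Because $\M$ is embedded, compact away from the origin, and has boundary on the round sphere which is invariant under all reflections through planes containing the center, I can run Aleksandrov reflection with any family of parallel planes: start with a plane $\Pi_t=\{x\cdot e=t\}$ far from $\M$ and decrease $t$ until $\Pi_t$ first touches $\M$, then continue moving it and reflecting the near part $\M_t^+$ of $\M$ across $\Pi_t$. The key monotonicity quantity to track is the first critical $t=t_0$ at which either (a) the reflected cap $(\M_t^+)^*$ becomes internally tangent to $\M$ at an interior point, or (b) $\Pi_t$ becomes orthogonal to $\M$ at a boundary point of $\M_t^+$. In either case the interior maximum principle for the minimal surface equation (case a) or Serrin's boundary-point lemma / Hopf lemma at the free-boundary edge (case b) — here one uses crucially that $\M$ meets $\S^2$ at the fixed angle $\pi/2$, so that the reflected piece also meets the sphere orthogonally and the comparison at the boundary is legitimate — forces $\M$ to be symmetric with respect to $\Pi_{t_0}$. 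I would carry this out for the plane passing through the origin, so the symmetry plane must contain the center of the sphere; doing this for sufficiently many directions $e$ yields that $\M$ has a whole pencil of planes of symmetry through the origin.

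Next I would extract rotational symmetry from this family of mirror symmetries. If $\M$ is symmetric under reflection through every plane in some two-parameter family through the origin (or even just through two transverse planes plus one more generic one), then composing reflections gives that $\M$ is invariant under a subgroup of $\OO(3)$ containing a nontrivial rotation; since $\M$ is a cone meeting $\S^2$ orthogonally along exactly two curves, the only possibility compatible with topological type $[1,2,0]$ is that the symmetry group contains a full circle of rotations about a fixed axis. Concretely, the two boundary curves $\gamma_1,\gamma_2$ are convex spherical curves each invariant under all these reflections, which forces each $\gamma_i$ to be a round circle centered on a common axis; a ring-type minimal surface (equivalently, by Lemma \ref{lem-min-immer}, a branch-point-free conformal minimal immersion of the annulus) spanning two coaxial circles and meeting the sphere orthogonally is, by the classical classification of axially symmetric minimal surfaces, a piece of catenoid (the plane and the catenoid are the only rotationally symmetric minimal surfaces, and the plane is excluded as it gives a disk-type, not ring-type, picture — or is excluded by Lemma \ref{lem-cap}). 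One can alternatively finish, once the axis is identified, by writing $\M$ as a surface of revolution $y=y(x)$ solving the minimal surface ODE $y\,y''=1+(y')^2$ whose solutions are exactly $y=a\cosh((x-b)/a)$, matching the computation in Section 5.2.

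The main obstacle I anticipate is the boundary analysis in the moving-plane argument: unlike the classical closed-hypersurface Aleksandrov theorem, here $\M$ has a free boundary, and one must guarantee that the moving plane cannot "escape through the edge" without producing symmetry — this is where the orthogonality condition $\cos\alpha=0$ (established just before this lemma from $g=0$ on $\fb{u_0}$) and Serrin's corner/boundary lemma are indispensable, and some care is needed because $\M$ is only a cone, hence non-smooth (indeed singular) at the origin, so the reflected caps must be kept away from $0$, e.g. by working on $\M\cap(B_1\setminus B_{1-\delta})$ and using scale invariance. A secondary technical point is verifying that the first-touching alternatives (a) and (b) are exhaustive and that in the degenerate "tangency at the highest point" case the classical Hopf-lemma comparison still applies given the convexity of the cross-sections from Theorem \ref{thrm-convex}. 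Once these are handled the rest is the standard reflection bookkeeping plus the elementary ODE integration.
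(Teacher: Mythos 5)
Your proposal correctly identifies the tools that appear in the paper's proof — Aleksandrov reflection, Serrin's boundary-point lemma, the crucial use of the orthogonality $\cos\alpha=0$, and the $C^{1,1}$/cone-singularity caveats — but it is missing the single step on which the whole argument actually hinges, and the step you substitute for it does not work.

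The gap is the identification of the axis. You propose to sweep with \emph{parallel} planes $\Pi_t=\{x\cdot e=t\}$ in ``any'' direction $e$, obtain a plane of symmetry through the origin for ``sufficiently many'' $e$, and then argue that this pencil of reflections generates a rotation. But a catenoid in a ball is \emph{not} symmetric about any plane perpendicular to a generic direction $e$: its mirror symmetries are exactly the planes containing the rotational axis, together with the waist plane. If the parallel-plane sweep produced a symmetry plane for every $e$, the surface would have to be a sphere, not a catenoid, so for directions $e$ oblique to the (unknown) axis the moving-plane argument must fail to terminate in a symmetry, and your proposal gives no mechanism to sort the good directions from the bad ones. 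The remark ``I would carry this out for the plane passing through the origin'' is not something one can decree; the critical position $t_0$ is determined by the geometry, and there is no a priori reason it equals $0$ (nor, in oblique directions, does the method produce symmetry at all). So the ``two-parameter family of mirror symmetries'' you want to compose does not exist at this stage of the argument.

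What the paper does first — and this is the missing idea — is to nail down the axis \emph{before} any reflection is attempted, using the flux (balancing) identity $\int_{\gamma_1}n^*\,ds+\int_{\gamma_2}n^*\,ds=0$ coming from $\lap_{\M}\X=0$, combined with the dual-cone structure from Lemma \ref{lem-cone}: because $\M$ is tangent to $\C_i^\star$ along $\gamma_i$, the conormal integrals lie strictly inside the convex dual cones, and the balancing identity forces a common diameter of $\S^2$ to lie strictly inside both $\C_1^\star$ and $\C_2^\star$. Only with this candidate axis in hand does the paper run Aleksandrov — and then with planes \emph{rotating} about a line perpendicular to that diameter (all of which pass through the origin and hence preserve $\S^2$), not with parallel sweeps; and it applies Serrin's lemma to the harmonic functions $u_0=rg$ and $\widetilde u=r\widetilde g$ rather than to $\M$ itself, precisely to sidestep the $C^{1,1}$-only regularity of $\p\M$ that you flag but do not resolve. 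Finally the conclusion is drawn from Schoen's theorem on minimal annuli spanning coaxial circles in parallel planes, rather than by integrating the surface-of-revolution ODE, though that difference is cosmetic. Without the flux/dual-cone step your argument has no axis to organize the reflections around, and the reflection bookkeeping you call ``standard'' cannot even get started in the right coordinate frame.
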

\begin{proof}
By Lemma  \ref{lem-cone} (iii) $\M$ is embedded. In particular, 
$\X$ is a conformal minimal immersion (see the discussion in Section \ref{subsec-immersion}).

Let $\p\M=\gamma_1\cup\gamma_2$. Then applying Stokes formula
we have
\begin{equation}\label{mass-eq-1}
\int_{\M}\lap_\M \X=\int_{\p\M}n^*ds=\int_{\gamma_1}n^*+\int_{\gamma_2}n^*ds
\end{equation}
where $n^*$ is the outward conormal, i.e. $n^*$ is tangent to $\M$ and normal to
$\p\M$, see \cite{Fang} page 81. Since $\X$ is minimal then $\lap_\M \X=0$.
Thus
\begin{equation}\label{mass}
\int_{\gamma_1}n^*ds+\int_{\gamma_2}n^*ds=0.
\end{equation}

Since $\M$ is tangential to $\C_i^\star$ it follows that
the conormal $n^*$ on $\gamma_i$ points in the direction of the generatrix of the dual cone $\C_i^\star$.
Observe that if we use the arc-length  parametrization of 
$\gamma_i$ and let $s_k\in[0, |\gamma_i|]$ be some partition points
 then the sums 
$S_m=\sum_{k=0}^m n^{*i}_k(s_{k+1}-s_k), n_k^{*i}\in \C_i^\star$ approximate the
boundary integrals in \eqref{mass-eq-1}. Consequently  the vector $S_m$ is strictly inside of the cone $\C_i^\star$ and in the limit converges to
the centre of mass  of $\gamma_i$ computed with respect of the origin (the vertex of the cone).
In view of \eqref{mass} there is a diameter of $\S^2$ strictly inside of both dual cones
$\C_1^\star$ and $\C_2^\star$.

%\subsection{Moving plane method}
Without loss of generality we assume that the diameter passes through the north and south poles.
Now we can apply Aleksandrov's moving plane method and Serrin's boundary point
lemma to finish the proof. Let $\Pi_t$ be the family of planes containing $x_1$ axis and
$t$ measures the angle between $\Pi_t$ and $x_3$ axis.

Now start rotating $\Pi_t$ about $x_1$ axis starting form a position when
$\Pi_t$ is a support hyperplane to either of the cones $\C_1^\star, \C_2^\star$
and $\Pi_t\cap \C_i^\star\not=\emptyset, i=1,2$. 

{\bf Case 1:}
If the first touch of $\M$ and its reflection $\widetilde\M$ with respect to the
plane $\Pi_t$ occurs at some interior point of $\M$.
Then from the maximum principle it follows that $\M=\widetilde \M$.

By Lemma \ref{lem-cone}, both dual cones are strictly convex.
Moreover, we claim that for $\delta$ small
the cones generated by $\M\cap \p B_{1-\delta}$ are convex, otherwise
the inflection point would propagate to $\C_i^\star$.

The two remaining possibilities are:

{\bf Case 2:} if the first touch of $\M$ and its reflection $\widetilde\M$
occurs  at some boundary point where $\p\M$  is perpendicular to $\Pi_t$,

{\bf Case 3:} if the first touch of $\M$ and its reflection $\widetilde\M$
occurs  at some  boundary point where $\p\M$ is not lying on $\Pi_t$.

We cannot directly apply Serrin's boundary point lemma \cite{Serrin}
because $\p\M$ is only $C^{1,1}$ by virtue of Lemma \ref{lem-cone}. However, from the fibering
of $\M$ near $\p\M$ we conclude that $\widetilde g\le g$ near the contact point, where
$\widetilde g$ is the support function of $\widetilde M$.
Thus $\widetilde u=r\widetilde g\le rg=u$. Hence applying
Serrin's boundary point lemma to the harmonic functions $\widetilde u$ and $u$
we conclude that
$\M=\widetilde \M$.

Choosing $\Pi_t$ to be an arbitrary family passing through a line perpendicular to the
diameter it follows that
 $\gamma_1, \gamma_2$ are circles and \eqref{mass} forces them to lie
on parallel planes. Applying  Corollary 2 \cite{Schoen} we infer that $\M$ is a part of catenoid.
\end{proof}

%-------------------------------------
%    SECTION
%-------------------------------------
\section{Appendix}
This section contains some well known results about  the solutions of the singular perturbation problem \eqref{pde-0}.
We begin with the uniform Lipschitz estimates of Luis Caffarelli, see  \cite{C-95} for the proof.
\begin{prop}\label{prop1}
Let $\{u_\e\}$ be a family of solution of \eqref{pde-0} then
there is a constant $C$ depending only on $N, \|\beta\|_{\infty}$ and independent of $\e$ such that
\begin{equation}\label{grad-est}
\|\na u_\e\|_{L^\infty(B_{\frac12})}\leq C.
\end{equation}
\end{prop}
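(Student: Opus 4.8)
This is the classical uniform Lipschitz estimate of L.\ Caffarelli \cite{C-95}; here I outline the line of proof. Since $\beta_\e$ is smooth each $u_\e$ is smooth, so the only issue is uniformity in $\e$. Because $\supp\beta\subset[0,1]$, the right-hand side $\beta_\e(u_\e)$ is supported in the thin strip $T_\e:=\{0\le u_\e\le\e\}$; hence $u_\e$ is harmonic in $B_1\setminus\overline{T_\e}$ and, as $\lap u_\e=\beta_\e(u_\e)\ge0$, subharmonic in $B_1$. By a routine covering and rescaling it suffices to bound $|\na u_\e(x_0)|$ at each $x_0\in B_{1/2}$, and one distinguishes: points with $\dist(x_0,T_\e)\gtrsim\e$ (handled by the interior gradient estimate for the harmonic function $u_\e$ on the ball $B_{\dist(x_0,\,\{u_\e\le\e\})}(x_0)$, together with a Harnack inequality for the positive harmonic function $u_\e$ there, and symmetrically in the negative phase); and points with $\dist(x_0,T_\e)\lesssim\e$, where the rescaling $w(y)=\e^{-1}u_\e(x_0+\e y)$ solves the semilinear equation $\lap w=\beta(w)$ with bounded right-hand side, so that $|\na u_\e(x_0)|=|\na w(0)|$ is controlled as soon as $|w|\le C$ near $0$. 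In every regime the only input beyond standard elliptic theory is a \emph{uniform linear growth} statement away from the free boundary: there are $C,r_0$ depending only on $N$ and $\|\beta\|_\infty$ with
$\sup_{B_r(z_0)}u_\e^+\le C(r+\e)$ and $\sup_{B_r(z_0)}u_\e^-\le C(r+\e)$ for $r\le r_0$ and $z_0$ on the topological boundary of $\{u_\e>\e\}$, resp.\ of $\{u_\e<0\}$.

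Granting linear growth, the proof closes: for $x_0$ with $u_\e(x_0)>\e$ put $d=\dist(x_0,\{u_\e\le\e\})$; the Harnack inequality for the positive harmonic function $u_\e$ on $B_{d/2}(x_0)$ combined with linear growth (applied at the boundary point realizing $d$, with scale $\sim d$) gives $\sup_{B_{d/2}(x_0)}u_\e\le C\,u_\e(x_0)\le C(d+\e)$, and then the gradient estimate yields $|\na u_\e(x_0)|\le C$ when $d\gtrsim\e$, while the case $d\lesssim\e$ falls into the near-strip regime above, where linear growth again bounds $|w|$ (hence $|\na w(0)|$); the negative phase is identical, which establishes \eqref{grad-est}. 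Linear growth itself is proved by comparison, via the maximum principle, with translates and rescalings of the one-dimensional standing wave $\e\,\Psi\big((x\cdot\nu-a)/\e\big)$, where $\Psi''=\beta(\Psi)$ so that $(\Psi')^2=2\B(\Psi)$ and $|\Psi'|\le\sqrt{2M}$, corrected by a quadratic term of size $\sim C(N)r^{-1}|x'|^2$ that absorbs the curvature of the level sets — this is where the dependence on $N$ enters. Alternatively one may argue by compactness: a failure of linear growth produces, after rescaling at a free boundary point, a global nonnegative subsolution with superlinear growth, and the integral identity underlying the Spruck monotonicity formula (which holds for each individual solution with no a priori bound) forces any genuine blow-up limit to be homogeneous of degree one, hence linear in the dimensions considered, contradicting superlinearity.

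The essential obstacle is precisely this linear growth, equivalently the rigidity of the global profiles: unlike for minimizers of $J_\e$ there is no energy comparison available, so the control must come entirely from the maximum principle and from the fact that the two phases are coupled through the strip $T_\e$, on which $\lap u_\e$ is as large as $\|\beta\|_\infty/\e$. This delicate barrier construction is carried out in \cite{C-95}, to which we refer for the details; the reductions above show how the uniform bound \eqref{grad-est} follows from it.
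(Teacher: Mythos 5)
The paper does not supply a proof here: it states Proposition \ref{prop1} and refers the reader to \cite{C-95}, so there is nothing internal to compare against. Your outline is, in its main thread, a faithful summary of Caffarelli's argument: splitting into the regime far from the transition strip $T_\e=\{0\le u_\e\le\e\}$ (where $u_\e$ is harmonic and one combines interior gradient estimates with Harnack and a linear-growth bound) and the regime within distance $O(\e)$ of $T_\e$ (where the rescaling $w(y)=\e^{-1}u_\e(x_0+\e y)$ solves $\lap w=\beta(w)$ and the bound follows from standard elliptic estimates once $w$ is locally bounded). The identification of linear growth as the essential nontrivial input, and its proof by barriers built from the one-dimensional profile $\Psi$ with $\Psi''=\beta(\Psi)$ and $(\Psi')^2=2\mathcal B(\Psi)$, is exactly right.

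One flag: the ``alternative'' compactness argument you append is circular in this context and should be dropped. First, to extract a ``global nonnegative subsolution'' as a blow-up limit you already need the compactness furnished by precisely the bound you are trying to prove. Second, the Spruck monotonicity formula as derived in this paper (see Corollary \ref{cor-mon} and the constant $C$ in the proof of Proposition \ref{prop1-1}) explicitly depends on $\|\na u_\e\|_\infty$; invoking it before the Lipschitz estimate is established is not admissible. Third, ``homogeneous of degree one, hence linear'' is false in general for $N\ge 3$, while Proposition \ref{prop1} is claimed uniformly in $N$. None of this affects your primary barrier-based sketch, which is sound.
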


As a consequence we get that one can extract converging sequences $\{u_{\e_n}\}$ of solutions of \eqref{pde-0}
such that  the limit functions are stationary points of the Alt-Caffarelli problem.

\begin{prop}\label{prop-compactness}
Let $u_\e$ be a family of solutions to \eqref{pde-0} in a
domain $\mathcal D\subset \R^N$. Let us assume that
$\|u_\e\|_{L^\infty(\mathcal D)}\le A$ for some constant
$A>0$ independent of $\e.$ For every $\e_n\to 0$ there
exists a subsequence $\e_{n'}\to 0$ and $u\in C^{0, 1}_{loc}(\mathcal D)$, such that
\begin{itemize}
\item[(i)] $u_{\e_{n'}}\to u$ uniformly on compact subsets of $\mathcal D$,
\item[(ii)] $\na u_{\e_{n'}}\to \na u$ in $L^2_{loc}(\mathcal D)$,
\item[(iii)] $u$ is harmonic in $\mathcal D\setminus \fb u$.
\end{itemize}

\end{prop}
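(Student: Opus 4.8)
The plan is to obtain compactness from the uniform Lipschitz estimate of Proposition \ref{prop1} and then to identify the limit by testing the equation against $\phi u_\e$; the only genuine point is that the ``defect measure'' carried by $\beta_\e(u_\e)$ concentrates on $\fb u$, where $u$ vanishes. First I would fix $\mathcal D'\Subset\mathcal D$: rescaling Proposition \ref{prop1} and using $\|u_\e\|_{L^\infty(\mathcal D)}\le A$ shows that $\{u_\e\}$ is bounded in $C^{0,1}(\mathcal D')$ uniformly in $\e$. Exhausting $\mathcal D$ by such $\mathcal D'$, the Arzelà--Ascoli theorem and a diagonal argument produce a subsequence $\e_{n'}\to 0$ and $u\in C^{0,1}_{loc}(\mathcal D)$ with $u_{\e_{n'}}\to u$ locally uniformly, the local Lipschitz bounds passing to the limit; this is (i). Since $\na u_{\e_{n'}}$ is bounded in $L^\infty_{loc}$ and $u_{\e_{n'}}\to u$, we get $\na u_{\e_{n'}}\rightharpoonup\na u$ weakly in $L^2_{loc}(\mathcal D)$. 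Moreover $\lap u_{\e_{n'}}=\beta_{\e_{n'}}(u_{\e_{n'}})\ge 0$, and by the divergence theorem and the gradient bound these nonnegative measures have locally uniformly bounded mass; passing to a further subsequence, $\beta_{\e_{n'}}(u_{\e_{n'}})\rightharpoonup\mu$ weakly-$\ast$, and necessarily $\mu=\lap u$.

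Next I would establish (iii). If $u(x_0)>0$, choose $\rho,\delta>0$ with $u\ge\delta$ on $B_\rho(x_0)\Subset\mathcal D$; by uniform convergence $u_{\e_{n'}}>\e_{n'}$ on $B_\rho(x_0)$ for all large $n'$, and since $\supp\beta\subset[0,1]$ this forces $\beta_{\e_{n'}}(u_{\e_{n'}})\equiv 0$ there, so each $u_{\e_{n'}}$, and hence $u$, is harmonic in $B_\rho(x_0)$. On the interior of $\{u=0\}$ the limit vanishes identically and is trivially harmonic. As $u$ is continuous, $\mathcal D\setminus\fb u$ is the union of these two open sets, which proves (iii); in particular $\lap u=0$ off $\fb u$, so $\mu=\lap u$ is supported on $\fb u$, where $u\equiv 0$.

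Finally, for (ii) it suffices to prove $\int_{\mathcal D}\phi|\na u_{\e_{n'}}|^2\to\int_{\mathcal D}\phi|\na u|^2$ for each $\phi\in C_0^\infty(\mathcal D)$, since this together with $\na u_{\e_{n'}}\rightharpoonup\na u$ forces $\na u_{\e_{n'}}\to\na u$ in $L^2_{loc}$. Multiplying $\lap u_{\e}=\beta_{\e}(u_{\e})$ by $\phi u_{\e}$ and integrating by parts gives
\[
\int_{\mathcal D}\phi|\na u_{\e}|^2 = -\int_{\mathcal D}u_{\e}\,\na u_{\e}\cdot\na\phi-\int_{\mathcal D}\phi\,u_{\e}\,\beta_{\e}(u_{\e}).
\]
The first term on the right converges to $-\int u\,\na u\cdot\na\phi$ (uniform convergence of $u_{\e_{n'}}$ times weak $L^2$ convergence of $\na u_{\e_{n'}}$), while the second converges to $-\int\phi u\,d\mu=0$ because $u_{\e_{n'}}\phi\to u\phi$ uniformly, $\beta_{\e_{n'}}(u_{\e_{n'}})\rightharpoonup\mu$, and $u$ vanishes on $\supp\mu$. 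Applying the same integration by parts to the Lipschitz function $u$ with $\lap u=\mu$ yields $\int_{\mathcal D}\phi|\na u|^2=-\int u\,\na u\cdot\na\phi-\int\phi u\,d\mu=-\int u\,\na u\cdot\na\phi$, and combining the two identities gives $\int\phi|\na u_{\e_{n'}}|^2\to\int\phi|\na u|^2$, which is (ii). The step I expect to be the crux is precisely the identification $\mu=\lap u$ together with $\supp\mu\subset\fb u$: this is what annihilates the $\beta_\e(u_\e)$ contribution in the limit and upgrades weak to strong $L^2$ convergence of the gradients; everything else is the standard Arzelà--Ascoli/weak-compactness package resting on Proposition \ref{prop1}.
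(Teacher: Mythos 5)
The paper does not actually prove this statement---it cites Lemma~3.1 of \cite{CLW-uniform}. Your argument reconstructs what is essentially that standard compactness proof: Arzel\`a--Ascoli plus a diagonal argument from the uniform Lipschitz estimate, extraction of the defect measure $\mu=\lap u$ carried by $\beta_\e(u_\e)$, identification of $\supp\mu$ with $\fb u$, and the integration-by-parts identity to upgrade weak to strong $L^2_{loc}$ convergence of the gradients. That architecture is correct, and parts (i), the weak-$*$ extraction of $\mu$, the passage to the limit in the two terms on the right of the integration-by-parts identity, and the final strong-convergence step in (ii) are all sound.

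There is, however, a genuine gap in your proof of (iii). Problem~\eqref{pde-0} only requires $|u_\e|\le 1$, so the limit $u$ may change sign (the paper's two-plane solutions $\alpha x_1^+-\gamma x_1^-$ show this is not an idle generality). Hence $\mathcal D\setminus\fb u=\{u>0\}\cup\mathrm{int}\,\{u\le 0\}$, and your assertion that $\mathcal D\setminus\fb u$ is the union of $\{u>0\}$ and $\mathrm{int}\,\{u=0\}$ is false as written: it omits $\{u<0\}$, and it silently assumes $\mathrm{int}\,\{u\le0\}=\{u<0\}\cup\mathrm{int}\,\{u=0\}$. Both points can be repaired. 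First, the argument you give for $u(x_0)>0$ works verbatim when $u(x_0)<0$: if $|u|\ge\delta$ on $B_\rho(x_0)$ then for $n'$ large $u_{\e_{n'}}\notin[0,\e_{n'}]$ there, so $\beta_{\e_{n'}}(u_{\e_{n'}})\equiv 0$ and $u$ is harmonic on $\{u<0\}$ as well. Second, you must invoke subharmonicity of $u$ (which you already have, since $\lap u=\mu\ge 0$): if $x_0\in\mathrm{int}\,\{u\le0\}$ and $u(x_0)=0$, then $u$ attains an interior maximum at $x_0$, so the strong maximum principle for subharmonic functions gives $u\equiv 0$ near $x_0$, i.e.\ $x_0\in\mathrm{int}\,\{u=0\}$; this yields the needed decomposition of $\mathrm{int}\,\{u\le 0\}$. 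Without these two additions you have not shown $u$ is harmonic off $\fb u$, and consequently you cannot conclude $\supp\mu\subset\fb u$---which is exactly the fact your proof of (ii) leans on to kill the $\int\phi u\,d\mu$ terms. So the gap is small and easily closed, but it sits on the load-bearing step of the whole argument.
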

\begin{proof}
See Lemma 3.1 \cite{CLW-uniform}.
\end{proof}

Next, we recall the estimates for the slopes of some global solutions.
\begin{prop}\label{prop-tech}% prop-tech Lipschitz+ limsup bound for gradient
Let $u$ be as in Proposition \ref{prop-compactness}. Then the following statements hold true:
\begin{itemize}
\item[(i)] $u$ is Lipschitz,
\item[(ii)] if $u_{\e_j}\to u=\alpha x_1^+$ locally uniformly, then $0\le %\limsup|\na u^+|\le
\alpha \le \sqrt{2M}$, see Proposition 5.2 \cite{CLW-uniform},
\item[(iii)] if $u_{\e_j}\to u=\alpha x_1^+-\gamma x_1^-+o(|x|)$ and $\gamma>0$ then
 $\alpha^2-\gamma^2=\sqrt{2M}$,  see Proposition 5.1 \cite{CLW-uniform}.
In this lemma the essential assumption is that $\gamma>0$.
\end{itemize}
\end{prop}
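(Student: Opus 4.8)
The plan is to treat the three items by a local analysis at the free boundary. Part (i) needs nothing new: Proposition \ref{prop1} gives a uniform bound $\|\nabla u_{\e_j}\|_{L^\infty(K)}\le C_K$ on every $K\Subset\mathcal D$, and a locally uniform limit of $C_K$-Lipschitz functions is $C_K$-Lipschitz, so $u\in C^{0,1}_{loc}$; this is already contained in Proposition \ref{prop-compactness}. For part (ii), $\alpha\ge 0$ is immediate, and the upper bound is the gradient estimate at flat free boundary points of Caffarelli--Lederman--Wolanski (\cite{CLW-uniform}, Prop.\ 5.2). The mechanism I would invoke is to rescale $u_{\e_j}$ at the $\e_j$-scale about a point of the transition layer converging to the flat free boundary of $\alpha x_1^+$: by Proposition \ref{prop1} and interior elliptic estimates one extracts an entire solution $w$ of $\Delta w=\beta(w)$ which, by the flatness of the macroscopic limit, is one-dimensional, $w=\psi(y_1)$ with $\psi$ convex (since $\psi''=\beta(\psi)\ge 0$), $\psi(-\infty)=0$ (whence $\psi\ge 0$ and $\psi'(-\infty)=0$) and $\psi'(+\infty)=\alpha$; multiplying $\psi''=\beta(\psi)$ by $\psi'$ and integrating gives the first integral $\tfrac12(\psi')^2=\mathcal B(\psi)$, and since $\mathcal B(t)=\int_0^t\beta\le M$ for all $t$, letting $y_1\to+\infty$ yields $\tfrac12\alpha^2\le M$. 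The only delicate point is the extraction of the one-dimensional profile, which is exactly where I would follow \cite{CLW-uniform}.

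For part (iii) I would first reduce to the exact two-plane profile: the blow-ups $u(\rho x)/\rho$ of $u=\alpha x_1^+-\gamma x_1^-+o(|x|)$ at the origin converge to $\alpha x_1^+-\gamma x_1^-$, which by Proposition \ref{prop-1st-blow} is itself a locally uniform limit of rescaled solutions of \eqref{pde-0} with parameter $\delta_j\to 0$, so it is enough to argue for $u=\alpha x_1^+-\gamma x_1^-$. Testing $\Delta u_\e=\beta_\e(u_\e)$ against $\phi\,\partial_1 u_\e$, $\phi\in C_0^\infty$, and integrating by parts (using $\beta_\e(u_\e)\,\partial_1 u_\e=\partial_1\mathcal B(u_\e/\e)$) produces the domain-variation identity
\begin{equation*}
\int\Big(\partial_1 u_\e\,\nabla u_\e\cdot\nabla\phi-\tfrac12|\nabla u_\e|^2\,\partial_1\phi\Big)=\int\mathcal B(u_\e/\e)\,\partial_1\phi.
\end{equation*}
By Proposition \ref{prop-compactness}(ii), $\nabla u_{\e_j}\to\nabla u$ in $L^2_{loc}$, so the left side passes to the limit with $u$ in place of $u_{\e_j}$. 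The hypothesis $\gamma>0$ is what pins down the limit of the right side with no further input: on every compact subset of $\{x_1<0\}$ one has $u_{\e_j}\le-\tfrac\gamma2|x_1|<-\e_j$ for $j$ large, so $u_{\e_j}/\e_j\to-\infty$ and, since $\supp\beta\subset[0,1]$, $\mathcal B(u_{\e_j}/\e_j)\equiv 0$ there, while on every compact subset of $\{x_1>0\}$ one gets $u_{\e_j}/\e_j\to+\infty$ and $\mathcal B(u_{\e_j}/\e_j)\to M$, the strip $\{|x_1|<\delta\}$ contributing $O(\delta)$. Hence $\mathcal B(u_{\e_j}/\e_j)\to M\chi_{\{x_1>0\}}$, and substituting $|\nabla u|^2=\alpha^2\chi_{\{x_1>0\}}+\gamma^2\chi_{\{x_1<0\}}$ and $\partial_1 u\,\nabla u=(\alpha^2\chi_{\{x_1>0\}}+\gamma^2\chi_{\{x_1<0\}})e_1$ into the limiting identity and integrating by parts across $\{x_1=0\}$ yields $\tfrac12(\alpha^2-\gamma^2)=M$, i.e.\ the Bernoulli relation of Theorem A, part (3).

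The genuine obstacle is the identification of $\lim_j\mathcal B(u_{\e_j}/\e_j)$, and this is precisely why $\gamma>0$ cannot be dropped: when $\gamma=0$ the transition layer of $u_{\e_j}$ over the side $\{x_1<0\}$ need not be complete --- $u_{\e_j}$ may stay of size $o(\e_j)$ there --- so the limit can carry a residual density $M_0\in[0,M)$ on $\{x_1<0\}$, exactly the term appearing in \eqref{domain-var-1} in the proof of Theorem A, whose vanishing requires the extra non-degeneracy/density information that is unavailable for general stationary points. All three statements are classical for the singular perturbation problem \eqref{pde-0} and are the content of the cited results of Caffarelli--Lederman--Wolanski; the above merely records the self-contained route.
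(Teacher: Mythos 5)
Your argument is correct and follows precisely the mechanism of the cited Caffarelli--Lederman--Wolanski propositions, which is all the paper offers by way of proof (the text simply cites \cite{CLW-uniform}~5.1 and~5.2 for (ii) and (iii)): part~(i) is the standard stability of local Lipschitz bounds under uniform limits via Proposition~\ref{prop1}; part~(ii) is the first-integral bound $\tfrac12(\psi')^2\le\mathcal B(\psi)\le M$ for the one-dimensional transition profile, the 1D reduction being the step you rightly defer to the reference; and part~(iii) is the pass-to-the-limit in the domain-variation identity~\eqref{domain-var}, with $\gamma>0$ guaranteeing the completion of the transition layer on $\{x_1<0\}$, exactly as you explain. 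One small remark: your computation correctly produces the Bernoulli relation $\alpha^2-\gamma^2=2M$; the statement of item~(iii) as printed, $\alpha^2-\gamma^2=\sqrt{2M}$, is a typo in the paper (compare Theorem~A(3) and the abstract).
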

\medskip
\begin{remark}\label{rem-wedge}
Observe that if $u(x)=\alpha x_1^++\bar\alpha x_1^-$  then we must necessarily have that $\alpha=\bar\alpha\le \sqrt{2M}$, see
Proposition 5.3 \cite{CLW-uniform}.
In this case the interior of the zero set of $u$ is empty. Thus one might have wedge-like solution.
\end{remark}

%%%%%%%%%%%%%%%%

\medskip
Using  Proposition \ref{prop1} we can extract a sequence  $u_{\e_j}$ for some sequence $\e_j$
such that $u_{\e_j}\to u$ uniformly  in $B_{\frac12}$, see Proposition \ref{prop-compactness}.
Let $u$ be a limit  and $0<\rho_j\downarrow 0$ and $u_j(x)=\frac{u(x_0+\rho_j x)}{\rho_j}, x_0\in \fb u$. Thanks to Proposition \ref{prop-tech}(i)
we can extract a subsequence, still labeled $\rho_j$, such that $u_j$ converges to some function
$u_0$ defined in $\R^N$. The function $u_0$ is called a blow-up limit of $u$ at the free boundary
point $x_0$ and it depends on $\{\rho_j\}$.

The two propositions to follow establish an important property of the blow-up limits, namely that
the first and second blow-ups of $u$ can be obtained from \eqref{pde-0} for a suitable choice of  parameter
$\e$. Observe that the scaled function $\na (u_{\e_j})_{\lambda_n}$ verifies the equation
\begin{equation}\label{eq-lap-scale}
\lap  (u_{\e_j})_{\lambda_j}=\frac{\lambda_j}{\e_j}\beta\left(\frac{\lambda_j}{\e_j}(u_{\e_j})_{\lambda_j}\right).
\end{equation}
Taking $\delta_j=\frac{\e_j}{\lambda_j}\to 0$ we see that
$ (u_{\e_j})_{\lambda_j}$ is  solution to $\lap u_{\delta_j}=\beta_{\delta_j}(u_{\delta_j})$.

\begin{prop}\label{prop-1st-blow}% First blow-up
Let $u_{\e_j}$ be a family of solutions to \eqref{pde-0}
in a domain $\mathcal D\subset \R^N$ such that
$u_{\e_j}\to u$ uniformly on $\mathcal D$ and $\e_j\to 0.$
Let $x_0\in \mathcal D\cap \fb u$ and let $x_n\in \fb u$ be such that
$x_n\to x_0$ as $n\to \infty$. Let $\lambda_n\to 0, u_{\lambda_n}(x)
=(1/\lambda_n)u(x_n+\lambda_nx)$ and $(u_{\e_j})_{\lambda_n}=(1/\lambda_n)u_{\e_j}(x_n+\lambda_n x)$. Assume that $u_{\lambda_n}\to U$ as $n\to \infty$ uniformly on compact subsets
of $\R^N$. Then there exists $j(n)\to \infty$ such that for every
$j_n\ge j(n)$ there holds that $\e_j/\lambda_n\to 0$ and
\begin{itemize}
\item $(u_{\e_{j_n}})_{\lambda_n}\to U$ uniformly on compact subsets of
$\R^N$,
\item $\na (u_{\e_{j_n}})_{\lambda_n}\to \na U$ in $L^2_{loc}(\R^N),$
\item $\na u_{\lambda_n}\to \na U$ in $L^2_{loc}(\R^N)$.
\end{itemize}
\end{prop}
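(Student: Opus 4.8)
\textbf{Proof proposal for Proposition \ref{prop-1st-blow}.}

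The plan is to run the standard diagonal extraction argument that upgrades the convergence of the blow-up sequence $u_{\lambda_n}\to U$ to a convergence of a doubly-scaled family of solutions of \eqref{pde-0}, exploiting the uniform Lipschitz bound of Proposition \ref{prop1} and the compactness statement of Proposition \ref{prop-compactness}. First I would fix a nested exhaustion $\R^N=\bigcup_m B_m$ by balls. For each fixed $n$, the functions $(u_{\e_j})_{\lambda_n}$ solve $\lap (u_{\e_j})_{\lambda_n}=\beta_{\delta_j}\big((u_{\e_j})_{\lambda_n}\big)$ with $\delta_j=\e_j/\lambda_n$, and since $\e_j\to 0$ for fixed $n$ we have $\delta_j\to 0$; moreover $\|\na (u_{\e_j})_{\lambda_n}\|_{L^\infty}\le C$ uniformly in $j$ by Proposition \ref{prop1} (after rescaling, the Lipschitz constant is scale-invariant). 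By Proposition \ref{prop-compactness} applied to the family $\{(u_{\e_j})_{\lambda_n}\}_j$, and since $u_{\e_j}\to u$ gives $(u_{\e_j})_{\lambda_n}\to u_{\lambda_n}$ uniformly on compacts as $j\to\infty$ for fixed $n$, we get $\na(u_{\e_j})_{\lambda_n}\to \na u_{\lambda_n}$ in $L^2_{loc}$ as $j\to\infty$.

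Next I would carry out the diagonalization. For each $n$, choose $j(n)$ so large that for all $j\ge j(n)$ one has simultaneously $\e_j/\lambda_n<1/n$,
\[
\sup_{B_n}\big|(u_{\e_j})_{\lambda_n}-u_{\lambda_n}\big|<\tfrac1n,
\qquad
\int_{B_n}\big|\na(u_{\e_j})_{\lambda_n}-\na u_{\lambda_n}\big|^2<\tfrac1n;
\]
this is possible by the $j\to\infty$ convergences just established. With this choice, $\e_{j_n}/\lambda_n\to 0$ is immediate. For the first bullet, on any fixed ball $B_R$ and for $n$ large (so $B_R\subset B_n$),
\[
\sup_{B_R}\big|(u_{\e_{j_n}})_{\lambda_n}-U\big|
\le \sup_{B_n}\big|(u_{\e_{j_n}})_{\lambda_n}-u_{\lambda_n}\big|
+\sup_{B_R}\big|u_{\lambda_n}-U\big|
\le \tfrac1n+\sup_{B_R}\big|u_{\lambda_n}-U\big|\to 0,
\]
using the hypothesis $u_{\lambda_n}\to U$ uniformly on compacts. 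The third bullet, $\na u_{\lambda_n}\to\na U$ in $L^2_{loc}$, follows by applying Proposition \ref{prop-compactness}(ii) to the limit function $u$ centered at the points $x_n\to x_0$: the family $u_{\lambda_n}$ is uniformly Lipschitz, solves the limiting free boundary problem, and converges uniformly to the harmonic-away-from-$\fb{}$ function $U$, so a Caccioppoli/compactness estimate yields strong $L^2_{loc}$ convergence of gradients; alternatively one invokes the corresponding part of Lemma 3.1 in \cite{CLW-uniform}. The second bullet then follows from the triangle inequality
\[
\int_{B_R}\big|\na(u_{\e_{j_n}})_{\lambda_n}-\na U\big|^2
\le 2\int_{B_n}\big|\na(u_{\e_{j_n}})_{\lambda_n}-\na u_{\lambda_n}\big|^2
+2\int_{B_R}\big|\na u_{\lambda_n}-\na U\big|^2
\le \tfrac2n+2\int_{B_R}\big|\na u_{\lambda_n}-\na U\big|^2\to 0.
\]

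The main obstacle, and the only point requiring genuine care rather than bookkeeping, is the strong $L^2_{loc}$ convergence of the gradients (bullets two and three): uniform Lipschitz bounds give only weak $L^2$ compactness for free, and upgrading to strong convergence requires the structure of the equation — either passing to the limit in the energy, using that the measures $\beta_{\delta_j}((u_{\e_j})_{\lambda_n})$ have locally bounded mass and the limit $u_{\lambda_n}$ is harmonic off a set of measure zero, or quoting the precise statement of Proposition \ref{prop-compactness}(ii) and its proof in \cite{CLW-uniform}. Everything else is the routine diagonal extraction and a triangle inequality, so I would state the gradient-convergence step as a direct consequence of Proposition \ref{prop-compactness} and spend the bulk of the write-up making the choice of $j(n)$ explicit.
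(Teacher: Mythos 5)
The paper proves this proposition only by citing Lemma~3.2 of \cite{CLW-uniform}, so there is no in-paper argument to compare against; your proposal to reconstruct it by a diagonal extraction built on Propositions~\ref{prop1} and~\ref{prop-compactness} is the right idea and is almost certainly what the cited lemma does. The choice of $j(n)$ and the derivation of the first bullet are fine.

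There is, however, a genuine logical misordering in the way you handle the two gradient bullets. You argue the third bullet ($\na u_{\lambda_n}\to\na U$ in $L^2_{loc}$) by ``applying Proposition~\ref{prop-compactness}(ii) to the limit function $u$ centered at the points $x_n$'', and then deduce the second bullet from the third by the triangle inequality. But Proposition~\ref{prop-compactness} is a statement about sequences of \emph{solutions to}~\eqref{pde-0}; the rescalings $u_{\lambda_n}$ of the limit function $u$ are not solutions of any $\mathcal P_\e$, so the proposition does not apply to them, and your justification of bullet three does not stand as written. The fix is to reverse the order. Once $j(n)$ is chosen as you describe, the diagonal family $w_n:=(u_{\e_{j(n)}})_{\lambda_n}$ consists of honest solutions to~\eqref{pde-0} with parameter $\delta_n=\e_{j(n)}/\lambda_n\to 0$ on balls exhausting $\R^N$, uniformly Lipschitz by the scale invariance of Proposition~\ref{prop1}, and by your bullet one $w_n\to U$ uniformly on compacts. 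Proposition~\ref{prop-compactness}(ii) then gives $\na w_n\to\na U$ in $L^2_{loc}$ along a subsequence, and by uniqueness of the uniform limit the whole sequence converges: this is the second bullet. The third bullet now follows by the triangle inequality
\[
\int_{B_R}|\na u_{\lambda_n}-\na U|^2\le 2\int_{B_n}|\na u_{\lambda_n}-\na w_n|^2+2\int_{B_R}|\na w_n-\na U|^2\le \tfrac2n+o(1),
\]
where the first term is controlled by your choice of $j(n)$ and the second by what was just established. With that reordering the proof is complete; the rest of your write-up is correct bookkeeping.
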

\begin{proof}
See Lemma 3.2 \cite{CLW-uniform}.
\end{proof}

Finally, recall  that the result of previous proposition extends to the second blow-up.
\begin{prop}\label{prop-2nd-blow}
Let $u_{\e_j}$ be a solution to \eqref{pde-0} in a domain
$\mathcal D_j\subset \mathcal D_{j+1}$ and $\cup_j \mathcal D_j=\R^N$
such that $u_{\e_j}\to U$ uniformly on compact sets of $\R^N$  and
$\e_j\to 0$. Let us assume that for some choice of positive numbers $d_n$
and points $x_n\in \fb U$, the sequence
\[U_{d_n}(x)=\frac1{d_n}U(x_n+d_nx)\]
converges uniformly on compact sets of $\R^N$ to a function $U_0$.
Let
\[(u_{\e_j})_{d_n}=\frac1{d_n}u_{\e_j}(x_n+d_nx).\]
Then there exists $j(n)\to \infty$ such that for every $j_n\ge j(n),$
there holds $\e_{j_n}/d_n\to 0$ and
\begin{itemize}
\item $(u_{\e_{j_n}})_{d_n}\to U_0$ uniformly on compact subsets of
$\R^N$,
\item $\na (u_{\e_j})_{d_n}\to \na U_0$ in $L^2_{loc}(\R^N).$\end{itemize}
\end{prop}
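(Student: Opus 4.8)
The statement is the ``second blow-up'' companion of Proposition~\ref{prop-1st-blow}, and I would prove it by the same device: a diagonal extraction running the singular-perturbation compactness one level deeper. First I would record the algebraic fact that, for each fixed $n$, the rescaled function $(u_{\e_j})_{d_n}(x)=d_n^{-1}u_{\e_j}(x_n+d_nx)$ solves
\[
\lap (u_{\e_j})_{d_n}=\beta_{\delta}\big((u_{\e_j})_{d_n}\big),\qquad \delta=\e_j/d_n,
\]
so it is again a solution of $(\mathcal P_\delta)$; and since $u_{\e_j}\to U$ uniformly on compact subsets of $\R^N$, and a compact set translated by $x_n$ and dilated by $d_n$ is again a compact set lying inside $\mathcal D_j$ once $j$ is large (because $\bigcup_j\mathcal D_j=\R^N$), for each fixed $n$ one obtains $(u_{\e_j})_{d_n}\to U_{d_n}$ uniformly on compact sets as $j\to\infty$.

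\textbf{The diagonal step.} Fix an exhaustion $\R^N=\bigcup_n B_n$. For each $n$ I would choose $j(n)$, increasing in $n$, large enough that for all $j\ge j(n)$: (i) $(u_{\e_j})_{d_n}$ is defined on $B_n$; (ii) $\e_j/d_n<1/n$, which is possible since $\e_j\to 0$ and $d_n$ is fixed; and (iii) $\|(u_{\e_j})_{d_n}-U_{d_n}\|_{L^\infty(B_n)}<1/n$, which is possible by the convergence just recorded. Then for any selection $j_n\ge j(n)$ the function $w_n:=(u_{\e_{j_n}})_{d_n}$ satisfies $\e_{j_n}/d_n\to 0$ by (ii), and, combining (iii) with the hypothesis $U_{d_n}\to U_0$ uniformly on compacts,
\[
\|w_n-U_0\|_{L^\infty(K)}\le \|w_n-U_{d_n}\|_{L^\infty(K)}+\|U_{d_n}-U_0\|_{L^\infty(K)}\longrightarrow 0
\]
for every compact $K$, which is the first bullet.

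\textbf{The gradient bullet, and the obstacle.} The point making the gradient convergence work is that $\{w_n\}$ is itself a singular-perturbation family: $w_n$ solves $(\mathcal P_{\delta_n})$ with $\delta_n=\e_{j_n}/d_n\to 0$, and $w_n\to U_0$ locally uniformly, hence $\{w_n\}$ is uniformly bounded on every fixed ball for $n$ large. Thus Proposition~\ref{prop-compactness} applies to $\{w_n\}$ on each ball $B_R$ (the uniform Lipschitz bound of Proposition~\ref{prop1} being the ingredient that furnishes $W^{1,\infty}_{loc}$ control): along any subsequence a further subsequence has $\na w_n\to \na V$ in $L^2_{loc}$, and $V=U_0$ since $w_n\to U_0$ in $C^0_{loc}$, so by uniqueness of the limit $\na w_n\to\na U_0$ in $L^2_{loc}(\R^N)$ along the full sequence. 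I do not expect a genuine obstacle here: the argument is a routine diagonalization, and the only points needing care are the simultaneous bookkeeping in the choice of $j(n)$ (where the hypotheses $\bigcup_j\mathcal D_j=\R^N$, $\e_j\to 0$, and $u_{\e_j}\to U$ locally uniformly each get used) and the observation that one should apply Propositions~\ref{prop1} and~\ref{prop-compactness} directly to the diagonal sequence $\{w_n\}$ rather than trying to pass gradients through the intermediate limit $U_{d_n}\to U_0$, which the hypotheses do not control.
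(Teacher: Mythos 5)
Your proof is correct; the paper does not give its own argument, but rather cites Lemma~3.3 of Caffarelli--Lederman--Wolanski, where precisely the diagonal extraction you outline (rescale, observe $(u_{\e_j})_{d_n}$ solves $(\mathcal P_{\e_j/d_n})$, choose $j(n)$ so that domain coverage, smallness of $\e_j/d_n$, and closeness to $U_{d_n}$ all hold simultaneously on $B_n$, then triangle-inequality against $U_{d_n}\to U_0$) is carried out, with the gradient bullet handled exactly as you do by feeding the diagonal family $\{w_n\}$ back into the first-level compactness result. The only cosmetic point worth noting is that the displayed second bullet reads $\nabla(u_{\e_j})_{d_n}$ where $\nabla(u_{\e_{j_n}})_{d_n}$ is meant, as you silently (and correctly) interpret it.
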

\begin{proof}
See Lemma 3.3 \cite{CLW-uniform}.
\end{proof}

Next lemma contains one of the crucial estimates needed for the 
proof of Proposition \ref{prop-sum}.

\begin{lem}\label{lem-limsup}
  Let $u\ge 0$ be as in Proposition \ref{prop-compactness}. Then
  \[\limsup_{x\to x_0, u(x)>0}|\na u(x)|\le \sqrt{2M}.\]
\end{lem}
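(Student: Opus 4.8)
The plan is to argue by contradiction and, via blow-up, reduce the statement to a gradient bound for a homogeneous global solution, where it follows from the maximum principle on the sphere together with the one-sided slope estimates of Proposition \ref{prop-tech}.

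Suppose the conclusion fails at some $x_0\in\fb u$: there is a sequence $x_k\to x_0$ with $u(x_k)>0$ and, after passing to a subsequence, $|\na u(x_k)|\to L>\sqrt{2M}$. Set $r_k=|x_k-x_0|$ and $\widetilde u_k(x)=u(x_0+r_kx)/r_k$. By Propositions \ref{prop1} and \ref{prop-tech}(i) the $\widetilde u_k$ are uniformly Lipschitz, so along a subsequence $\widetilde u_k\to u_0$ locally uniformly, where $u_0$ is a blow-up of $u$ at $x_0$; by Proposition \ref{prop-1st-blow} $u_0$ is again a limit of solutions to \eqref{pde-0}, and by Corollary \ref{cor-mon} it is homogeneous of degree one, $u_0=rg(\sigma)$. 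The rescaled points $w_k=(x_k-x_0)/r_k$ lie on $\S^{N-1}$; passing to a further subsequence, $w_k\to w_*\in\S^{N-1}$. As long as the bad points do not run into $\fb u$ faster than their distance to $x_0$, i.e. $\liminf_k\dist(x_k,\fb u)/r_k>0$, the function $u_0$ is positive (hence harmonic) in a fixed ball about $w_*$, so $\na\widetilde u_k(w_k)\to\na u_0(w_*)$ and $|\na u_0(w_*)|=L$; by homogeneity of $\na u_0$ (see \eqref{grad-hom}) this forces $|\na u_0|\equiv L>\sqrt{2M}$ along the whole ray through $w_*$, which lies in $\po{u_0}$. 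The borderline case $\dist(x_k,\fb u)/r_k\to 0$ requires a more careful rescaling, anchored at a point of $\fb u$ nearest to $x_k$ at the scale $\dist(x_k,\fb u)$ and invoking Propositions \ref{prop-1st-blow}--\ref{prop-2nd-blow} to stay among limits of \eqref{pde-0}. In all cases it then suffices to prove: a nonnegative homogeneous degree-one limit $u_0=rg$ of \eqref{pde-0} satisfies $|\na u_0|\le\sqrt{2M}$ on $\po{u_0}$.

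For this, $u_0$ is harmonic in $\po{u_0}$, so $\phi:=|\na u_0|^2$ is subharmonic there, with $\lap\phi=2|D^2u_0|^2\ge 0$. By \eqref{grad-hom} the gradient $\na u_0$ is homogeneous of degree zero, hence so is $\phi$; writing the Laplacian in polar coordinates gives $\lap\phi=r^{-2}\lap_{\S^{N-1}}\phi$, so subharmonicity of $\phi$ is equivalent to $\lap_{\S^{N-1}}\phi\ge 0$ on $\po g\subset\S^{N-1}$, where on the sphere $\phi=g^2+|\na_{\S^{N-1}}g|^2$. The maximum principle on $\po g$ then yields $\sup_{\po g}\phi=\sup_{\fb g}\phi$; on $\fb g$ one has $g=0$, so there $\phi=|\na_{\S^{N-1}}g|^2$, which is the square of the one-sided slope of $u_0$ at the corresponding free boundary point. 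That slope does not exceed $\sqrt{2M}$: for $N=2$ this is immediate from Theorem A and Proposition \ref{prop-tech}(ii) (together with Remark \ref{rem-wedge}); for $N=3$, $\fb{u_0}\setminus\{0\}$ is regular by Lemma \ref{lem-AC}, so a blow-up of $u_0$ at a free boundary point $\neq 0$ is, by homogeneity of $u_0$, a half-plane solution, whose slope is $\le\sqrt{2M}$ by Proposition \ref{prop-tech}(ii) (Proposition \ref{prop-2nd-blow} keeping it among limits of \eqref{pde-0}). Hence $\phi\le 2M$ on $\fb g$, so $\phi\le 2M$ on $\po g$, i.e. $|\na u_0|\le\sqrt{2M}$ on $\po{u_0}$ --- contradicting the previous paragraph, and the lemma follows.

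I expect the main obstacle to be precisely the borderline case in the reduction: when the bad points $x_k$ approach $\fb u$ faster than $|x_k-x_0|$, the gradient information is not visible to the blow-up centered at $x_0$, and one must instead rescale at the nearest free boundary point so that $x_k$ sits at a fixed positive distance from the free boundary of the limiting profile; passing the gradient to the limit there and then homogenizing (again via Propositions \ref{prop-1st-blow}--\ref{prop-2nd-blow}), and in dimension three using the regularity of $\fb{u_0}$ from Lemma \ref{lem-AC} together with the $C^1$ convergence of the approximating free boundaries at regular points, is the technically delicate part. A minor, routine point is the continuity of $\phi$ up to $\fb g$ with boundary value $|\na_{\S^{N-1}}g|^2$, which follows from the first-order asymptotic expansion of $u_0$ at its regular free boundary points.
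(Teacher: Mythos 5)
Your proposal takes a genuinely different route from the paper's, but it has two real gaps, and the paper's own argument is both simpler and dimension-free.

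The paper's proof does not blow up at $x_0$ at all. It takes a sequence $z_k\to x_0$ with $u(z_k)>0$ and $|\nabla u(z_k)|\to l$, sets $\rho_k=\dist(z_k,\fb u)$, and rescales \emph{at $z_k$} with scale $\rho_k$, i.e.\ $u_k(x)=\rho_k^{-1}u(z_k+\rho_k x)$. In the rescaled picture the function is positive and harmonic in $B_1(0)$ with $|\nabla u_k(0)|\to l$, the free boundary touches $\partial B_1$, and the standard sliding argument of Alt--Caffarelli--Friedman (Lemma~3.4 of \cite{AC-quasi}) forces the limit to be $u_0(x)=lx_1$ for $x_1>0$. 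Proposition~\ref{prop-1st-blow} keeps $u_0$ in the class of limits of \eqref{pde-0}; unique continuation determines the behaviour for $x_1<0$; and Remark~\ref{rem-wedge} together with Proposition~\ref{prop-tech}(ii) gives $l\le\sqrt{2M}$. Note that this rescaling centered at $z_k$ with the \emph{intrinsic} scale $\rho_k=\dist(z_k,\fb u)$ handles, in one stroke and in every dimension, precisely the case you flag as ``borderline'': there is no dichotomy between $\liminf\dist(x_k,\fb u)/r_k>0$ and $\to 0$, because the scale is chosen so that the rescaled bad point always sits at unit distance from the rescaled free boundary.

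The two concrete gaps in your argument are the following. First, you acknowledge that the borderline case ``requires a more careful rescaling, anchored at a point of $\fb u$ nearest to $x_k$,'' but you do not carry it out; that rescaling \emph{is} the entire proof (and is not merely a corner case), so the proposal as written is incomplete. Second, the reduction to the maximum principle for $\phi=|\nabla u_0|^2$ on $\po{g}\subset\S^{N-1}$ terminates only if you can bound the free boundary slope $|\nabla_{\S^{N-1}}g|$ on $\fb g$, and you establish this only for $N=2$ (via Theorem~A) and $N=3$ (via free boundary regularity, Lemma~\ref{lem-AC}). The lemma, however, is stated and used for general $N$; the paper's proof is dimension-independent because it never needs regularity of $\fb{u_0}$---it only needs the one-dimensional slope estimate for half-plane/wedge solutions. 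Your subharmonicity-plus-max-principle idea is appealing, and there is no actual circularity with Lemma~\ref{lem-AC} in the paper's logical order, but as a proof of Lemma~\ref{lem-limsup} it is restricted to low dimensions and more dependent on the rest of the paper's machinery (homogeneity of the blow-up via Corollary~\ref{cor-mon}, the three-dimensional structure theory) than the paper's self-contained argument.
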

\begin{proof}
  To fix the ideas we let $x_0=0$ and $l=\limsup\limits_{x\to 0, u(x)>0}|\na u(x)|$.
  Suppose $l>0$, otherwise we are done.
  Choose a sequence $z_k\to 0$ such that $u(z_k)>0$ and $|\nabla u(z_k)|\to l.$
  Setting $\rho_k=|y_k-z_k|$, where  $y_k\in \fb u)$ is the nearest point to $z_k$
  on the free boundary and proceeding as in  the proof of  \cite{AC-quasi} Lemma 3.4
  we can conclude that the blow-up sequence $u_k(x)=\rho_k^{-1}u(z_k+\rho_k x)$
  has a limit $u_0$ (at least for a subsequence, thanks to Proposition \ref{prop1})
  such that $u_0(x)=lx_1, x_1>0$ in a suitable coordinate system.
  Moreover, by Proposition \ref{prop-1st-blow} it follows that
  $u_0$ is a limit of some $u_{\lambda_j}$ solving $\lap u_{\lambda_j}=\beta_{\lambda_j}(u_{\lambda_j})$ in $B_{r_j}, r_j\to \infty$.
  If there is a point $z\in \{x_1=0\}$ and $r>0$ such that $u_0>0$ in $B_r(z)\cap \{x_1<0\}$
  then near $z$ we must have $u_0(x)=l(x-z)_1^++l(x-z)_1^-+o(x-z)$, see Remark \ref{rem-wedge}.
  Applying the unique continuation theorem to $u_0(x)-u_0(-x_1, x_2, \dots, x_n)$
  we see that $u_0=l(-x_1)^+, x_1<0$. Thus recalling Remark  \ref{rem-wedge} again we infer that
  $l\le \sqrt{2M}$.
\end{proof}

Finally, we mention a useful identity for the solutions $u_\e$, see equation (5.2) \cite{CLW-uniform}:
Let $u_\e$ be a solution of \eqref{pde-0} then for any $\phi\in C_0^\infty(B_1)$ there holds
\begin{equation}\label{domain-var}
\int\left(\frac{|\na u_\e|^2}2+\mathcal B(u_\e/\e)\right)\p_1\phi=\int\sum_k\p_ku_\e \p_1u_\e\p_k\phi.
\end{equation}

%\begin{proof}
%See equation (5.2) \cite{CLW-uniform}.
\begin{comment}
Multiply the equation $\lap u_\e=\beta_\e(u_\e)$ by $\p_1u_\e\phi$ where $\phi\in C_0^\infty(B_{\frac12})$
then
\begin{eqnarray}
\int\lap u_\e \partial_1u_\e\phi&=&-\int\sum_k\p_ku_\e(\p^2_{1k}\phi+\p_1u_\e\phi_k)\\
&=&\int\frac{|\na u_\e|^2}2\p_1\phi-\int\sum_k\p_ku_\e \p_1u_\e\p_k\phi.
\end{eqnarray}
On the other hand
\begin{eqnarray}
\int\beta_\e(u_\e)\p_1u_\e\phi=-\int\mathcal B(u_\e/\e)\p_1\phi
\end{eqnarray}
and combining
\eqref{domain-var} follows.
\end{comment}
%\end{proof}

%%%
%
%%%
\begin{bibdiv}
\begin{biblist}

\bib{Aleksandrov}{article}{
   author={Alexandroff, A.},
   title={\"Uber die Oberfl\"achenfunktion eines konvexen K\"orpers. (Bemerkung
   zur Arbeit ``Zur Theorie der gemischten Volumina von konvexen K\"orpern'')},
   language={Russian, with German summary},
   journal={Rec. Math. N.S. [Mat. Sbornik]},
   volume={6(48)},
   date={1939},
   pages={167--174},
   review={\MR{0001597}},
}

\bib{AC}{article}{
   author={Alt, H. W.},
   author={Caffarelli, L. A.},
   title={Existence and regularity for a minimum problem with free boundary},
   journal={J. Reine Angew. Math.},
   volume={325},
   date={1981},
   pages={105--144},
   issn={0075-4102},
   review={\MR{618549}},
}

\bib{ACF}{article}{
   author={Alt, Hans Wilhelm},
   author={Caffarelli, Luis A.},
   author={Friedman, Avner},
   title={Variational problems with two phases and their free boundaries},
   journal={Trans. Amer. Math. Soc.},
   volume={282},
   date={1984},
   number={2},
   pages={431--461},
   issn={0002-9947},
   review={\MR{732100}},
}

\bib{AC-quasi}{article}{
   author={Alt, Hans Wilhelm},
   author={Caffarelli, Luis A.},
   author={Friedman, Avner},
   title={A free boundary problem for quasilinear elliptic equations},
   journal={Ann. Scuola Norm. Sup. Pisa Cl. Sci. (4)},
   volume={11},
   date={1984},
   number={1},
   pages={1--44},
   issn={0391-173X},
   review={\MR{752578}},
}

\bib{C-Harnack-0}{article}{
   author={Caffarelli, Luis A.},
   title={A Harnack inequality approach to the regularity of free
   boundaries. I. Lipschitz free boundaries are $C^{1,\alpha}$},
   journal={Rev. Mat. Iberoamericana},
   volume={3},
   date={1987},
   number={2},
   pages={139--162},
   issn={0213-2230},
   review={\MR{990856}},
}

\bib{C-Harnack-x}{article}{
   author={Caffarelli, Luis A.},
   title={A Harnack inequality approach to the regularity of free
   boundaries. II. Flat free boundaries are Lipschitz},
   journal={Comm. Pure Appl. Math.},
   volume={42},
   date={1989},
   number={1},
   pages={55--78},
   issn={0010-3640},
   review={\MR{973745}},
}

\bib{C-95}{article}{
   author={Caffarelli, Luis A.},
   title={Uniform Lipschitz regularity of a singular perturbation problem},
   journal={Differential Integral Equations},
   volume={8},
   date={1995},
   number={7},
   pages={1585--1590},
   issn={0893-4983},
   review={\MR{1347971}},
}

\bib{CJK}{article}{
   author={Caffarelli, Luis A.},
   author={Jerison, David},
   author={Kenig, Carlos E.},
   title={Global energy minimizers for free boundary problems and full
   regularity in three dimensions},
   conference={
      title={Noncompact problems at the intersection of geometry, analysis,
      and topology},
   },
   book={
      series={Contemp. Math.},
      volume={350},
      publisher={Amer. Math. Soc., Providence, RI},
   },
   date={2004},
   pages={83--97},
   review={\MR{2082392}},
}

\bib{CKS}{article}{
   author={Caffarelli, Luis A.},
   author={Karp, Lavi},
   author={Shahgholian, Henrik},
   title={Regularity of a free boundary with application to the Pompeiu
   problem},
   journal={Ann. of Math. (2)},
   volume={151},
   date={2000},
   number={1},
   pages={269--292},
   issn={0003-486X},
   review={\MR{1745013}},
}
	
\bib{CLW-uniform}{article}{
   author={Caffarelli, L. A.},
   author={Lederman, C.},
   author={Wolanski, N.},
   title={Uniform estimates and limits for a two phase parabolic singular
   perturbation problem},
   journal={Indiana Univ. Math. J.},
   volume={46},
   date={1997},
   number={2},
   pages={453--489},
   issn={0022-2518},
   review={\MR{1481599}},
}

\bib{CS-book}{book}{
   author={Caffarelli, Luis},
   author={Salsa, Sandro},
   title={A geometric approach to free boundary problems},
   series={Graduate Studies in Mathematics},
   volume={68},
   publisher={American Mathematical Society, Providence, RI},
   date={2005},
   pages={x+270},
   isbn={0-8218-3784-2},
   review={\MR{2145284}},
}

\bib{Fang}{book}{
   author={Fang, Yi},
   title={Lectures on minimal surfaces in ${\bf R}^3$},
   series={Proceedings of the Centre for Mathematics and its Applications,
   Australian National University},
   volume={35},
   publisher={Australian National University, Centre for Mathematics and its
   Applications, Canberra},
   date={1996},
   pages={ii+176},
   isbn={0-7315-2443-8},
   review={\MR{1401344}},
}		

\bib{HPP}{article}{
   author={Hauswirth, Laurent},
   author={H\'elein, Fr\'ed\'eric},
   author={Pacard, Frank},
   title={On an overdetermined elliptic problem},
   journal={Pacific J. Math.},
   volume={250},
   date={2011},
   number={2},
   pages={319--334},
   issn={0030-8730},
   review={\MR{2794602}},
}

\bib{JK}
{article}{
author = {D. Jerison}, 
author = {K. Perera},
title = {Higher critical points in a free boundary problem},
journal = {{P}reprint, arXiv:1610.00799},
year= {2016},
}

\bib{JS}{article}{
   author={Jerison, David},
   author={Savin, Ovidiu},
   title={Some remarks on stability of cones for the one-phase free boundary
   problem},
   journal={Geom. Funct. Anal.},
   volume={25},
   date={2015},
   number={4},
   pages={1240--1257},
   issn={1016-443X},
   review={\MR{3385632}},
}

\bib{Khavinson}{article}{
   author={Khavinson, Dmitry},
   author={Lundberg, Erik},
   author={Teodorescu, Razvan},
   title={An overdetermined problem in potential theory},
   journal={Pacific J. Math.},
   volume={265},
   date={2013},
   number={1},
   pages={85--111},
   issn={0030-8730},
   review={\MR{3095114}},
}

\bib{Rosenberg}{article}{
   author={Langevin, R\'emi},
   author={Rosenberg, Harold},
   title={A maximum principle at infinity for minimal surfaces and
   applications},
   journal={Duke Math. J.},
   volume={57},
   date={1988},
   number={3},
   pages={819--828},
   issn={0012-7094},
   review={\MR{975123}},
}

\bib{N-book}{book}{
   author={Nitsche, Johannes C. C.},
   title={Lectures on minimal surfaces. Vol. 1},
   note={Introduction, fundamentals, geometry and basic boundary value
   problems;
   Translated from the German by Jerry M. Feinberg;
   With a German foreword},
   publisher={Cambridge University Press, Cambridge},
   date={1989},
   pages={xxvi+563},
   isbn={0-521-24427-7},
   review={\MR{1015936}},
}
		
\bib{Nitsche}{article}{
   author={Nitsche, Johannes C. C.},
   title={Stationary partitioning of convex bodies},
   journal={Arch. Rational Mech. Anal.},
   volume={89},
   date={1985},
   number={1},
   pages={1--19},
   issn={0003-9527},
   review={\MR{784101}},
}

\bib{Nit-caten}{article}{
   author={Nitsche, Johannes C. C.},
   title={A characterization of the catenoid},
   journal={J. Math. Mech.},
   volume={11},
   date={1962},
   pages={293--301},
   review={\MR{0137043}},
}

\bib{Rez}{article}{
   author={Reznikov, Alexander G.},
   title={Linearization and explicit solutions of the minimal surface
   equation},
   journal={Publ. Mat.},
   volume={36},
   date={1992},
   number={1},
   pages={39--46},
   issn={0214-1493},
   review={\MR{1179600}},
}	

\bib{RV}{article}{
   author={Ros, Antonio},
   author={Vergasta, Enaldo},
   title={Stability for hypersurfaces of constant mean curvature with free
   boundary},
   journal={Geom. Dedicata},
   volume={56},
   date={1995},
   number={1},
   pages={19--33},
   issn={0046-5755},
   review={\MR{1338315}},
}

\bib{Schneider}{book}{
   author={Schneider, Rolf},
   title={Convex bodies: the Brunn-Minkowski theory},
   series={Encyclopedia of Mathematics and its Applications},
   volume={151},
   edition={Second expanded edition},
   publisher={Cambridge University Press, Cambridge},
   date={2014},
   pages={xxii+736},
   isbn={978-1-107-60101-7},
   review={\MR{3155183}},
}

\bib{Schoen}{article}{
   author={Schoen, Richard M.},
   title={Uniqueness, symmetry, and embeddedness of minimal surfaces},
   journal={J. Differential Geom.},
   volume={18},
   date={1983},
   number={4},
   pages={791--809 (1984)},
   issn={0022-040X},
   review={\MR{730928}},
}	

\bib{Serrin}{article}{
   author={Serrin, James},
   title={A symmetry problem in potential theory},
   journal={Arch. Rational Mech. Anal.},
   volume={43},
   date={1971},
   pages={304--318},
   issn={0003-9527},
   review={\MR{0333220}},
}

\bib{Spruck}{article}{
   author={Spruck, Joel},
   title={Uniqueness in a diffusion model of population biology},
   journal={Comm. Partial Differential Equations},
   volume={8},
   date={1983},
   number={15},
   pages={1605--1620},
   issn={0360-5302},
   review={\MR{729195}},
}

\bib{T}{article}{
   author={Traizet, Martin},
   title={Classification of the solutions to an overdetermined elliptic
   problem in the plane},
   journal={Geom. Funct. Anal.},
   volume={24},
   date={2014},
   number={2},
   pages={690--720},
   issn={1016-443X},
   review={\MR{3192039}},
}

\bib{Weiss}{article}{
   author={Weiss, G. S.},
   title={A singular limit arising in combustion theory: fine properties of
   the free boundary},
   journal={Calc. Var. Partial Differential Equations},
   volume={17},
   date={2003},
   number={3},
   pages={311--340},
   issn={0944-2669},
   review={\MR{1989835}},
}\end{biblist}
\end{bibdiv}

\end{document}